\newtheorem{prop}{Proposition}[section]
\newtheorem{lem}[prop]{Lemma}
\newtheorem{cor}[prop]{Corollary}
\newtheorem{thm}[prop]{Theorem}
\theoremstyle{definition}
\newtheorem{defi}[prop]{Definition}
\theoremstyle{remark}
\newtheorem{remar}[prop]{Remark}
\DeclareMathAlphabet{\mathpzc}{OT1}{pzc}{m}{it}
\DeclareMathOperator{\Hom}{Hom}
\DeclareMathOperator{\Mor}{Mor}
\DeclareMathOperator{\Sym}{Sym}
\DeclareMathOperator{\GL}{GL}
\DeclareMathOperator{\Ker}{Ker}
\DeclareMathOperator{\Gal}{Gal}
\DeclareMathOperator{\Spec}{Spec}
\DeclareMathOperator{\Sp}{Sp}
\DeclareMathOperator{\Rep}{Rep}
\DeclareMathOperator{\Def}{Def}
\DeclareMathOperator{\Set}{Set}
\newcommand{\Qp}{\mathbb {Q}_p}
\newcommand{\Aa}{\mathfrak A}
\newcommand{\Fp}{\mathbb F_p}
\newcommand{\mm}{\mathfrak m}
\newcommand{\OO}{\mathcal O}
\newcommand{\nn}{\mathfrak n}
\newcommand{\pp}{\mathfrak p}
\newcommand{\br}[1]{\llbracket #1\rrbracket}
\newcommand{\qq}{\mathfrak{q}}
\newcommand{\alg}{\mathrm{alg}}
\newcommand{\cont}{\mathrm{cont}}
\newcommand{\rhobar}{\overline{\rho}}
\newcommand{\gen}{\mathrm{gen}}
\DeclareMathOperator{\Spf}{Spf}
\newcommand{\rig}{\mathrm{rig}}
\newcommand{\univ}{\mathrm{univ}}
\newcommand{\kappabar}{\overline{\kappa}}
\newcommand{\chara}{\mathrm{char}}
\newcommand{\xbar}{\bar{x}}
\newcommand{\kbar}{\bar{k}}
\newcommand{\PC}{\mathrm{PC}}
\newcommand{\cPC}{\mathrm{cPC}}
\newcommand{\Lbar}{\overline{L}}
\renewcommand{\xbar}{\bar{x}}
\newcommand{\disc}{\operatorname{disc}}
\newcommand{\Thetabar}{\overline{\Theta}}
\newcommand{\colim}{\mathop{\mathstrut\rm colim}\limits}
\newcommand{\eqto}{\xrightarrow{\cong}}
\newcommand{\Cont}{\mathcal C}
\newcommand{\ps}{\mathrm{ps}}
\newcommand{\Rig}{\mathrm{Rig}}
\newcommand{\op}{\mathrm{op}}
\newcommand{\Fhat}{\widehat{F}}
\newcommand{\wTheta}{\widetilde{\Theta}}
\title[Deformations of pseudocharacters]{Deformations of pseudocharacters and Mazur's finiteness condition}
\author{Vytautas Pa\v{s}k\={u}nas and  Julian Quast }
\date{\today.}
\begin{document}

\begin{abstract} We show that  deformation rings $R^{\ps}$ of $G$-pseudocharacters of a profinite group $\Gamma$ are noetherian, when $\Gamma$ satisfies Mazur's finiteness condition. The proof proceeds by reduction to the case when $\Gamma$ is finitely generated, where the result was previously established by the second author.  This enables us to extend our work on
moduli spaces of $R^{\ps}$-condensed representations of a finitely generated profinite group $\Gamma$, to the groups satisfying Mazur's 
finiteness condition. We also show that  the functor from rigid analytic spaces over 
$\Qp$ to sets, which associates to a rigid space $Y$ the set of continuous
$\OO(Y)$-valued $G$-pseudocharacters of $\Gamma$  is representable by a quasi-Stein rigid analytic space, and we study its general properties. 
We expect these results to be useful, when 
studying global Galois representations. 
\end{abstract}
\maketitle

\tableofcontents
\section{Introduction}

If $\Omega$ is any algebraically closed field of characteristic zero then a
semisimple representation $\rho: \Gamma \rightarrow \GL_n(\Omega)$ of a group $\Gamma$ is 
determined by its trace. Anyone who worked out a character table of a finite group
can appreciate that this is a useful result. Conversely, one might wonder when 
a conjugation invariant function from $\Gamma$ to $\Omega$ is a trace of a representation 
of $\Gamma$. Lafforgue's $G$-pseudocharacters, whose definition we recall in \Cref{laf}, provide an answer to these questions 
for any (generalised) reductive group scheme $G$ instead of $\GL_n$ and without restriction on the characteristic of $\Omega$. They were introduced by V.~Lafforgue in
his work \cite{Laf} on the Langlands correspondence in the function field case  in order to associate Galois representations to automorphic forms. Since then the theory 
has been developed further by B\"ockle--Harris--Khare--Thorne \cite{BHKT}, Dat--Helm--Kurinczuk--Moss \cite{DHKM}, Emerson--Morel \cite{emerson2023comparison}, 
Weidner \cite{weidner}, Zhu \cite{zhu_coherent},
 and J.Q. \cite{quast}.

\subsection{Results} Let $L$ be a finite extension of $\Qp$ with a ring of integers $\OO$ and 
residue field $k$. Let $G$ be a (generalised) reductive group scheme over $\OO$, 
let $G^0$ be the neutral component of $G$. Let $\Gamma$ be a profinite group and 
let $\Thetabar$ be a continuous $k$-valued $G$-pseudocharacter. It is shown 
in \cite{quast} that the deformation problem $\Def^{\Gamma}_{\Thetabar}$, parameterising continuous $A$-valued
$G$-pseudocharacters  of $\Gamma$ lifting $\Thetabar$, where $A$ is a local  artinian $\OO$-algebra 
with residue field $k$, is  pro-represented by 
a local profinite $\OO$-algebra $R^{\ps,\Gamma}_{\Thetabar}$ with residue field $k$.

By the reconstruction 
theorem alluded to above there is a continuous representation 
$\rhobar: \Gamma \rightarrow G(\kbar)$, such that the $G$-pseu\-do\-cha\-racter $\Theta_{\rhobar}$ associated 
to $\rhobar$ is equal to $\Thetabar$. Let $K$ be the kernel of $\rhobar$ and 
let $Q$ be a quotient of $\Gamma$ fitting into an exact sequence 
\begin{equation}\label{def_Q_intro}
0 \rightarrow K(p) \rightarrow Q \rightarrow \Gamma/K \rightarrow 0,
\end{equation}
where $K(p)$ is the maximal pro-$p$ quotient of $K$. We show in \Cref{Q_indep} that 
$Q$ does not depend on the choice of $\rhobar$. We may view $\Thetabar$ as a $G$-pseu\-do\-cha\-racter of $Q$. Moreover, any 
continuous pseudocharacter of $Q$ becomes  a continuous pseudocharacter
of $\Gamma$ after composing it with the quotient map. Hence, 
$\Def^Q_{\Thetabar}$ is a closed subfunctor of
$\Def^{\Gamma}_{\Thetabar}$,
which induces a surjection $R^{\ps, \Gamma}_{\Thetabar}\twoheadrightarrow R^{\ps, Q}_{\Thetabar}$. The following theorem
is the technical heart of the paper. 

\begin{thm}[{\Cref{same_ring}}]\label{intro_main} The map
$R^{\ps, \Gamma}_{\Thetabar}\twoheadrightarrow R^{\ps, Q}_{\Thetabar}$
is an isomorphism. In particular, every deformation of $\Thetabar$ factors through $Q$. 
\end{thm}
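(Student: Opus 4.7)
The plan is to show that for every local artinian $\OO$-algebra $A$ with residue field $k$, every continuous $A$-valued $G$-pseudocharacter $\Theta_A$ of $\Gamma$ lifting $\Thetabar$ is pulled back from a (necessarily unique) $A$-valued $G$-pseudocharacter of $Q$. Pro-representability then upgrades this pointwise statement to the desired isomorphism of deformation rings $R^{\ps,\Gamma}_{\Thetabar} \cong R^{\ps,Q}_{\Thetabar}$.

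The representation-theoretic model case is immediate: any continuous lift $\rho_A \colon \Gamma \to G(A)$ of $\rhobar$ automatically factors through $Q$. Indeed, since $\rhobar|_K$ is trivial, $\rho_A(K)$ is contained in $\ker(G(A)\to G(k))$, which is a finite group of $p$-power order (filter by $\mm^i$ and identify graded pieces with additive subquotients of $\Lie(G)\otimes_{\OO}\mm^i/\mm^{i+1}$, each annihilated by $p$). So $\rho_A|_K$ factors through the maximal pro-$p$ quotient $K(p)$, hence $\rho_A$ factors through $Q$, and by functoriality the same holds for the associated pseudocharacter $\Theta_{\rho_A}$.

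To handle arbitrary pseudocharacter lifts I would invoke the formalism of \cite{quast}: the universal framed lift of $\rhobar$ yields a canonical morphism $R^{\ps,\Gamma}_{\Thetabar} \to R^{\square,\Gamma}_{\rhobar}$, and analogously for $Q$; the pro-$p$ argument above yields $R^{\square,\Gamma}_{\rhobar} \cong R^{\square,Q}_{\rhobar}$; combining with the description of $R^{\ps}$ as a subring (e.g.\ of $\Ghat$-invariants) of $R^{\square}$, the isomorphism should then propagate back to the pseudocharacter side.

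The main obstacle is this last step, because the relationship between $R^{\ps}_{\Thetabar}$ and $R^{\square}_{\rhobar}$ is more subtle than a bare invariants description and must be handled with care. A safer alternative is to argue directly on the pseudocharacter side: the restriction $\Theta_A|_K$ lifts the trivial $k$-valued pseudocharacter of $K$, and for the trivial residual class the deformation problem is controlled by framed lifts of the trivial representation (whose centralizer is all of $G$), to which the pro-$p$ argument applies verbatim. This gives that $\Theta_A|_K$ factors through $K(p)$, and the conjugation compatibility of Lafforgue pseudocharacters under the $\Gamma/K$-action globalizes this to a factorization of $\Theta_A$ through $Q$.
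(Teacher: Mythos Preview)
Your proposal has a genuine gap at its core. Both approaches you sketch rely on the unproven assertion that pseudocharacter deformations are ``controlled by framed lifts''---either for $\Gamma$ directly (your first attempt, via $R^{\ps}$ as a subring of $R^{\square}$) or for $K$ with the trivial residual pseudocharacter (your ``safer alternative''). The paper explicitly notes that injectivity of $R^{\ps}_{\Thetabar} \to R^{\square}_{\rhobar}$ is \emph{not known} for general profinite groups, even when $G=\GL_n$ (see \cite[Remark 1.25]{che_durham}). There is no reason the trivial residual case for $K$ should be easier: that the centralizer of the trivial representation is all of $G$ only tells you that framed and unframed \emph{representation} deformations agree, not that every pseudocharacter deformation arises from a representation. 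The paper's \Cref{sec_GIT} and \Cref{sec_free} are devoted precisely to establishing this injectivity in the one case where it is accessible---finitely generated free profinite groups---via a GIT identification $\PC^F_G\cong\Rep^F_G\sslash G^0$ and a commutative-algebra analysis of completions (\Cref{compl_inj}, \Cref{inject_free}). The general case is then reduced to the free case by restricting $\Theta^u$ to free profinite groups $\Fhat_Y$ mapping to $\Gamma$ and using that a profinite group is pro-$p$ if and only if all its closed finitely generated subgroups are (\Cref{rewrite}, \Cref{pro_p}). Your argument skips exactly this work.

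There is also a second, independent gap in your alternative: even if $\Theta_A|_K$ factored through $K(p)$, it would not follow formally that $\Theta_A$ factors through $Q$. Membership of $\delta$ in $\Ker(\Theta_A|_K)$ only imposes the identities $\Theta_{A,n}(f)(k_1,\ldots,k_n\delta)=\Theta_{A,n}(f)(k_1,\ldots,k_n)$ for $k_i\in K$, whereas $\delta\in\Ker(\Theta_A)$ requires them for arbitrary $\gamma_i\in\Gamma$; the Lafforgue axioms provide no direct passage between the two, and invoking ``conjugation compatibility under the $\Gamma/K$-action'' does not bridge this. The paper avoids the issue entirely by never restricting to $K$.
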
 

A profinite group $\Gamma$ satisfies Mazur's $p$-finiteness condition $\Phi_p$ if for 
every open subgroup $\Gamma'$ of $\Gamma$ the space of continuous 
group homomorphisms $\Hom^{\cont}(\Gamma', \Fp)$ is finite. If $\Gamma$ satisfies $\Phi_p$ then 
it is easy to see that $K(p)$ is a finitely generated pro-$p$ subgroup 
and hence $Q$ is a finitely generated\footnote{Throughout the paper this means that the group in question is finitely generated as a topological group.} profinite group. It was shown 
in \cite{quast} that the ring $R^{\ps, Q}_{\Thetabar}$ is noetherian
and hence we obtain:

\begin{cor}\label{noeth_intro} If $\Gamma$ satisfies Mazur's  condition $\Phi_p$ then $R^{\ps, \Gamma}_{\Thetabar}$ is noetherian. 
\end{cor}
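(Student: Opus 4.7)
The plan is to combine \Cref{intro_main} with the noetherianity statement from \cite{quast} for finitely generated profinite groups, the only real work being to verify that the hypothesis $\Phi_p$ forces the quotient $Q$ appearing in \eqref{def_Q_intro} to be topologically finitely generated.

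First I would invoke \Cref{intro_main}: the surjection $R^{\ps,\Gamma}_{\Thetabar}\twoheadrightarrow R^{\ps,Q}_{\Thetabar}$ is an isomorphism, so it suffices to prove that $R^{\ps,Q}_{\Thetabar}$ is noetherian. By the result of \cite{quast} quoted just before the corollary, this in turn follows as soon as $Q$ is topologically finitely generated as a profinite group.

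To establish finite generation of $Q$, I would argue in two steps. The image of $\rhobar\colon\Gamma\to G(\kbar)$ is compact, hence contained in $G(k')$ for some finite extension $k'/k$; since $G$ is of finite type over $\OO$ and $k'$ is finite, $G(k')$ is a finite group, so $K=\ker\rhobar$ is open in $\Gamma$ with finite index. Consequently $Q$ sits in the extension \eqref{def_Q_intro} of the finite group $\Gamma/K$ by the pro-$p$ group $K(p)$, so it is enough to show that $K(p)$ is finitely generated as a pro-$p$ group. By the standard criterion, this amounts to showing that $\Hom^{\cont}(K(p),\Fp)$ is finite. By the universal property of $K(p)$ as the maximal pro-$p$ quotient of $K$, we have
\[
\Hom^{\cont}(K(p),\Fp)=\Hom^{\cont}(K,\Fp),
\]
and the latter is finite because $K$ is an open subgroup of $\Gamma$ and $\Gamma$ satisfies $\Phi_p$.

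The substantive content is entirely carried by \Cref{intro_main} and the noetherianity statement for finitely generated $Q$ proved in \cite{quast}; so there is no genuine obstacle here beyond the two elementary observations that $K$ is open in $\Gamma$ and that continuous $\Fp$-valued homomorphisms of $K$ factor through its maximal pro-$p$ quotient.
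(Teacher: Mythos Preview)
Your proposal is correct and follows essentially the same route as the paper: invoke \Cref{intro_main} to reduce to $Q$, observe that $\Phi_p$ forces $K(p)$ (and hence $Q$) to be topologically finitely generated via $\Hom^{\cont}(K(p),\Fp)=\Hom^{\cont}(K,\Fp)$, and then apply the noetherianity result of \cite{quast} for finitely generated profinite groups. This is exactly the argument sketched in the paragraph preceding the corollary and carried out in detail later as \Cref{fg} and \Cref{Phi_p_noeth}.
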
 

In \cite{defG} for a finitely generated profinite group $\Gamma$ we have 
defined a functor $X^{\gen, \Gamma}_{\Thetabar}: R^{\ps, \Gamma}_{\Thetabar}\text{-}\alg \rightarrow \Set$,
such that $X^{\gen, \Gamma}_{\Thetabar}(A)$ is the set of 
$R^{\ps}_{\Thetabar}$-condensed 
representations $\rho: \Gamma \rightarrow G(A)$, such that the $G$-pseudocharacter 
$\Theta_{\rho}$ of $\rho$ is equal to the universal deformation $\Theta^u$ specialised along the map $R^{\ps, \Gamma}_{\Thetabar} \rightarrow A$. The condition 
$R^{\ps, \Gamma}_{\Thetabar}$-condensed is a continuity condition on the representation expressed in an algebraic way.  We have shown in \cite{defG} that for finitely generated profinite groups $\Gamma$  the functor $X^{\gen, \Gamma}_{\Thetabar}$ is represented by a finite type $R^{\ps}_{\Thetabar}$-algebra
$A^{\gen, \Gamma}_{\Thetabar}$. \Cref{noeth_intro} allows us to 
define $X^{\gen,\Gamma}_{\Thetabar}$ for profinite groups $\Gamma$ satisfying $\Phi_p$
in the same way as for finitely generated profinite groups. Using 
\Cref{intro_main} we prove: 

\begin{thm}[{\Cref{XgenQ}}] If $\Gamma$ satisfies $\Phi_p$, then $X^{\gen, \Gamma}_{\Thetabar}= X^{\gen, Q}_{\Thetabar}$.
\end{thm}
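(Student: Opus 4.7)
The plan is to compare the two functors on $A$-points for every $R^{\ps}_{\Thetabar}$-algebra $A$; by \Cref{intro_main} this is the same as the category of $R^{\ps, Q}_{\Thetabar}$-algebras, so both functors share a common domain. The surjection $\pi: \Gamma \twoheadrightarrow Q$ induces by pullback a natural transformation
\[
\pi^*: X^{\gen, Q}_{\Thetabar} \to X^{\gen, \Gamma}_{\Thetabar}, \qquad \rho \mapsto \rho \circ \pi.
\]
This is well-defined: the universal $G$-pseudocharacter of $\Gamma$ factors through $\pi$ to the universal $G$-pseudocharacter of $Q$ (by \Cref{intro_main}), and the $R^{\ps}$-condensed condition, being formulated purely in terms of the pseudocharacter and the topology of $R^{\ps}$, is preserved under precomposition with $\pi$. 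Injectivity of $\pi^*$ on $A$-points is immediate from surjectivity of $\pi$.

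The content is to prove surjectivity. Fix $A$ and $\rho \in X^{\gen, \Gamma}_{\Thetabar}(A)$, and set $K = \ker \rhobar$ and $N = \ker(K \twoheadrightarrow K(p))$, so $Q = \Gamma/N$. The task is to show $\rho|_N = 1$, equivalently that $\rho|_K : K \to G(A)$ factors through the maximal pro-$p$ quotient $K(p)$. The guiding heuristic is that $\rho|_K$ is a condensed deformation of the trivial representation $\rhobar|_K = 1$, so its image should lie in a pro-$p$-type subgroup of $G(A)$; any continuous homomorphism from $K$ to a pro-$p$ group must factor through $K(p)$ by universality.

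To implement this, I would argue as follows. For each maximal ideal $\mm \subset A$ with residue field $k'$, the reduction $\rho \bmod \mm : \Gamma \to G(k')$ carries $G$-pseudocharacter $\Thetabar \otimes_k k'$; the $R^{\ps}$-condensed condition forces this reduction to be continuous (with $G(k')$ given its profinite topology when $k'$ is finite, suitably extended otherwise). By the reconstruction theorem, $\rho \bmod \mm$ is $G(k')$-conjugate to $\rhobar \otimes_k k'$, so it kills $K$, and hence $\rho(K) \subseteq \ker(G(A) \to G(A/\mm))$. Intersecting over all $\mm$ and exploiting the compatibility of the $R^{\ps}$-condensed structure with inverse limits yields that $\rho|_K$ lands in a pro-$p$-type subgroup on which $\rho|_K$ is continuous, whence it factors through $K(p)$.

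The main obstacle is this last step: translating the globally-formulated $R^{\ps}$-condensed condition into genuine pro-$p$-continuity of $\rho|_K$ strong enough to trigger the factorization through $K(p)$. I expect this to follow from the noetherianity of $R^{\ps, \Gamma}_{\Thetabar}$ established in \Cref{noeth_intro}, which should allow one to reduce the question to the case when $A$ is a finite-type $R^{\ps}_{\Thetabar}$-algebra and to verify the factorization on formal completions at closed points of $\Spec A$, where the pro-$p$-ness of the kernel of reduction is manifest and the identity $\rho(n) = 1$ for $n \in N$ can be propagated globally by the Jacobson property.
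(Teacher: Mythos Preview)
Your outline has the right shape but contains a real gap precisely where you flag the ``main obstacle.'' Two points:

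First, a local error: the reconstruction theorem does \emph{not} say that $\rho \bmod \mm$ is conjugate to $\rhobar \otimes_k k'$; it says only that their $G$-semisimplifications agree. So $\rho \bmod \mm$ need not kill $K$; its restriction to $K$ can land nontrivially in the unipotent radical of a parabolic. (The paper's Lemma preceding \Cref{rewrite} makes exactly this point: the kernel may shrink, but only by a finite $p$-group.) Thus the containment $\rho(K) \subseteq \ker(G(A)\to G(A/\mm))$ is not justified.

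Second, and more seriously, the step you describe as an obstacle is the whole content. For an \emph{arbitrary} $R^{\ps}_{\Thetabar}$-algebra $A$ there is no Jacobson property to invoke, and ``pro-$p$-type subgroup of $G(A)$'' has no meaning absent a topology on $A$. The $R^{\ps}$-condensed condition gives continuity of $\tau\circ\rho$ into a finitely generated $R^{\ps}$-module $M\subset A^n$, but there is no evident pro-$p$ subgroup of $G(A)$ sitting inside $M$ through which $\rho|_K$ is forced to factor. Propagating the identity $\rho(n)=1$ from formal completions to all of $A$ would require an injectivity statement (e.g.\ $A\hookrightarrow\prod_{\mm}\widehat{A}_{\mm}$) that fails in general.

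The paper bypasses this entirely: it chooses a closed immersion $\tau:G\hookrightarrow\GL_d$, uses that composition with $\tau$ gives injections $X^{\gen,\Gamma}_{\Thetabar}(A)\hookrightarrow X^{\gen,\Gamma}_{\tau\circ\Thetabar}(A)$ (and likewise for $Q$), and thereby reduces to $G=\GL_d$. In that case the Cayley--Hamilton algebra $E$ provides a single continuous representation $\Gamma\to E^{\times}$ with $E$ finite over $R^{\ps}$, through which every condensed representation factors; one then checks directly that this universal map factors through $Q$ (this is \cite[Lemma 5.5]{defG}). The embedding into $\GL_d$ is exactly what produces the missing ``universal object'' that your direct argument lacks.
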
 

Since $Q$ is finitely generated the above theorem allows us to transfer the results concerning $X^{\gen, \Gamma}_{\Thetabar}$ proved for finitely generated profinite groups $\Gamma$ in \cite{defG}, \cite{inf_laf} to the profinite groups satisfying $\Phi_p$. In particular, we obtain

\begin{cor} If $\Gamma$ satisfies Mazur's condition $\Phi_p$, then $X^{\gen, \Gamma}_{\Thetabar}$ is 
represented by a finite type $R^{\ps, \Gamma}_{\Thetabar}$-algebra. 
\end{cor}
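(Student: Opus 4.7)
The plan is to chain together the three preceding ingredients: \Cref{intro_main} (which identifies the two deformation rings), \Cref{XgenQ} (which identifies the two moduli functors), and the representability theorem of \cite{defG} for finitely generated profinite groups.

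First, I would recall that under the hypothesis $\Phi_p$ the auxiliary group $Q$ defined by \eqref{def_Q_intro} is topologically finitely generated; this is the observation made in the paragraph preceding \Cref{noeth_intro} (using that $\Gamma/K$ is finite because $\rhobar$ takes values in the discrete group $G(\kbar)$, and that the Burnside basis theorem forces the pro-$p$ group $K(p)$ to be finitely generated once $\Hom^{\cont}(K,\Fp)$ is finite). This places us squarely in the setting of \cite{defG}, which therefore produces a finite type $R^{\ps, Q}_{\Thetabar}$-algebra $A^{\gen, Q}_{\Thetabar}$ representing the functor $X^{\gen, Q}_{\Thetabar}$.

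Second, I would invoke \Cref{intro_main} to promote the canonical surjection $R^{\ps, \Gamma}_{\Thetabar} \twoheadrightarrow R^{\ps, Q}_{\Thetabar}$ to an isomorphism. Via this isomorphism, $A^{\gen, Q}_{\Thetabar}$ acquires the structure of a finite type $R^{\ps, \Gamma}_{\Thetabar}$-algebra. Combined with the equality $X^{\gen, \Gamma}_{\Thetabar} = X^{\gen, Q}_{\Thetabar}$ supplied by \Cref{XgenQ}, this algebra represents $X^{\gen, \Gamma}_{\Thetabar}$, as required.

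The whole content of the corollary is lodged in its ingredients, and no serious additional obstacle arises at this stage. The deep inputs are \Cref{intro_main}, which is the main theorem of the present paper, and the representability result of \cite{defG} for finitely generated profinite groups; \Cref{XgenQ} is exactly the bridge that transports the latter along the former.
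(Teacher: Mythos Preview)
Your proposal is correct and mirrors the paper's approach exactly: the paper first establishes that $Q$ is finitely generated (\Cref{fg}), then proves $X^{\gen,\Gamma}_{\Thetabar}=X^{\gen,Q}_{\Thetabar}$ (\Cref{XgenQ}) using $R^{\ps,\Gamma}_{\Thetabar}=R^{\ps,Q}_{\Thetabar}$ (\Cref{same_ring}), and finally cites \cite[Proposition 8.3]{defG} for the finitely generated case. The only difference is cosmetic: the paper compresses the argument to a single citation, relying on the sentence preceding the corollary to explain that the results for finitely generated groups transfer via \Cref{XgenQ}.
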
 

To a representation $\rho: \Gamma \rightarrow G(A)$ one may associate its 
$G$-pseudocharacter $\Theta_{\rho}$. This map induces a morphism 
of schemes $X^{\gen, \Gamma}_{\Thetabar}\rightarrow X^{\ps, \Gamma}_{\Thetabar}$, 
where $X^{\ps, \Gamma}_{\Thetabar}=\Spec R^{\ps, \Gamma}_{\Thetabar}$. The morphism 
is $G^0$-equivariant for the conjugation action on the source and the trivial 
action on the target. Thus it factors through the GIT-quotient 
$X^{\gen, \Gamma}_{\Thetabar}\sslash G^0 \rightarrow X^{\ps}_{\Thetabar}$. 
By appealing to the analogous result proved for finitely generated groups 
$\Gamma$ in \cite{inf_laf} we obtain:

\begin{cor} If $\Gamma$ satisfies $\Phi_p$, then $X^{\gen, \Gamma}_{\Thetabar}\sslash G^0 \rightarrow X^{\ps}_{\Thetabar}$ is a finite adequate homeomorphism in the sense 
of \cite[Definition 3.3.1]{alper}.
\end{cor}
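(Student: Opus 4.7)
The plan is to reduce to the corresponding statement for $Q$, which is finitely generated because $\Gamma$ satisfies $\Phi_p$, and then invoke the result for finitely generated groups already proved in \cite{inf_laf}. By \Cref{intro_main} the surjection $R^{\ps,\Gamma}_{\Thetabar}\twoheadrightarrow R^{\ps,Q}_{\Thetabar}$ is an isomorphism, so $X^{\ps,\Gamma}_{\Thetabar}=X^{\ps,Q}_{\Thetabar}$ as affine schemes; and by the preceding theorem the functors $X^{\gen,\Gamma}_{\Thetabar}$ and $X^{\gen,Q}_{\Thetabar}$ on $R^{\ps}$-algebras coincide, so the representing schemes agree as well (using \Cref{noeth_intro} to know that $X^{\gen,\Gamma}_{\Thetabar}$ is even defined in the generality at hand).

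The main verification is that under these identifications both the $G^0$-action by conjugation on $X^{\gen}_{\Thetabar}$ and the pseudocharacter morphism $X^{\gen}_{\Thetabar}\to X^{\ps}_{\Thetabar}$ correspond. This is essentially tautological: the equality $X^{\gen,\Gamma}_{\Thetabar}=X^{\gen,Q}_{\Thetabar}$ arises from inflating a representation along the quotient map $\Gamma\twoheadrightarrow Q$, and both conjugation by $g\in G^0$ and the assignment $\rho\mapsto\Theta_\rho$ are functorial in the source group, so they commute with inflation. Taking GIT quotients then produces an isomorphism $X^{\gen,\Gamma}_{\Thetabar}\sslash G^0 \cong X^{\gen,Q}_{\Thetabar}\sslash G^0$ over $X^{\ps,\Gamma}_{\Thetabar}=X^{\ps,Q}_{\Thetabar}$, intertwining the two natural morphisms to the pseudocharacter space.

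Since $Q$ is finitely generated, the cited result from \cite{inf_laf} applies and says that $X^{\gen,Q}_{\Thetabar}\sslash G^0\to X^{\ps,Q}_{\Thetabar}$ is a finite adequate homeomorphism in the sense of \cite[Definition 3.3.1]{alper}. Transporting along the isomorphism above yields the same conclusion for $\Gamma$. There is no substantive obstacle beyond this compatibility check; the deep inputs -- the isomorphism of pseudocharacter deformation rings and the identification of the $X^{\gen}$ functors -- have already been established earlier in the paper, and the passage to GIT quotients and to the notion of finite adequate homeomorphism is entirely formal.
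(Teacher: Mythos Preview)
Your proposal is correct and follows exactly the approach the paper intends: reduce to $Q$ via \Cref{same_ring} and \Cref{XgenQ}, then invoke the finitely generated case from \cite{inf_laf}. The paper's own proof is the single citation ``\cite[Theorem 4.12]{inf_laf}'', leaving the reduction implicit; your write-up simply makes explicit the compatibility of the $G^0$-action and the pseudocharacter map under inflation, which is indeed a tautology.
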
 

If $G=\GL_n$ this result was proved by Carl Wang-Erickson \cite{WE_alg} building on the results of Ga\"etan Chenevier on determinant laws \cite{che_durham}. 

Let $\Rig_L$  be the category of rigid analytic spaces over $L$. If $Y\in \Rig_L$ then 
its ring of global sections $\OO(Y)$ carries a natural topology. We let 
$\tilde{X}_G: \Rig_L^{\op} \rightarrow \Set$ be the functor 
which to a rigid analytic space  $Y$ associates the set of 
continuous $G$-pseudocharacters $\Theta$ valued in $\OO(Y)$.

\begin{thm}[{\Cref{main_rigid}, \Cref{points_rigid}}]\label{main_rigid_intro} If $\Gamma$ satisfies Mazur's  condition $\Phi_p$, then 
the functor $\tilde{X}_G$ is representable by a quasi-Stein rigid analytic space
$X_G$. The $\Lbar$-points of $X_G$ are in canonical bijection with $G^0(\Lbar)$-conjugacy classes of continuous $G$-semisimple\footnote{We remind the reader that a representation 
$\rho: \Gamma\rightarrow G(\Omega) $, where $\Omega$ is an algebraically closed field, is $G$-semisimple (or $G$-completely reducible)  provided that whenever
$\rho(\Gamma)$  is contained in 
$P(\Omega)$ for a parabolic subgroup $P$ of $G$, it is contained in $L(\Omega)$ for a Levi subgroup $L$ of $P$. We  refer the reader to \cite[Section 6]{BMR} for more details.}
representations
$\rho: \Gamma \rightarrow G(\Lbar)$. 
\end{thm}

If $\Gamma$ is finitely generated then the result was proved by J.Q. in \cite{quast}.
Our proof proceeds by reduction to the finitely generated case. To carry out the 
reduction in \Cref{pro-p-again} we prove an analog of \Cref{intro_main} for 
deformation rings of continuous $G$-pseudocharacters valued in local fields. 
If $G=\GL_n$ then \Cref{main_rigid_intro} is the main result of \cite{che_durham}.

\subsection{What was known and tried before} Let us  recall the previous related results.
If $G=\GL_n$ then using the work of Emerson--Morel \cite{emerson2023comparison} 
one can work with Chenevier's determinant laws instead of $\GL_n$-pseudocharacters. In this context \Cref{intro_main} is proven by Chenevier in \cite{che_durham}: 
as part of the theory  one gets a Cayley--Hamilton algebra $E$ together with 
a continuous representation $\rho^{\univ}: \Gamma \rightarrow E^{\times}$. One may then show that  $\rho^{\univ}$ factors through $Q$ and deduce \Cref{intro_main} from this, 
see \cite[Lemma 3.8]{che_durham} and \cite[Lemma 5.5]{defG} for details. 

If $G$ is
any generalised reductive group then the Cayley--Hamilton algebra is not available
anymore. However, one could still try to deduce  \Cref{noeth_intro} from the $\GL_n$-case 
by choosing a 
closed immersion $\tau: G\hookrightarrow \GL_n$ and studying the map 
$R^{\ps, \Gamma}_{\tau\circ \Thetabar} \rightarrow R^{\ps, \Gamma}_{\Thetabar}$ 
induced by composition with $\tau$. If the map is finite then one may deduce from 
the $\GL_n$-case 
that $R^{\ps, \Gamma}_{\Thetabar}$ is noetherian. This approach was carried out by 
J.Q. in \cite[Section 5.3]{quast} for classical groups under the assumption $p>2$ in 
the orthogonal cases. However, this strategy runs into difficult invariant theoretic questions in
general. Since $R^{\ps, \Gamma}_{\tau\circ \Thetabar} \rightarrow R^{\ps,\Gamma}_{\Thetabar}$ 
is in general not surjective the authors do not see a way of deducing \Cref{intro_main}
from the $\GL_n$-case.

If $\Gamma$ is a finitely generated profinite group then J.Q. proves \Cref{noeth_intro}
by showing that the tangent space is finite dimensional, which is equivalent to 
showing that $\Def_{\Thetabar}^{\Gamma}(k[\varepsilon])$ is finite. Let us informally explain the underlying difficulty if we drop the assumption that $\Gamma$ is finitely generated. The $G$-pseudocharacter $\Theta$ 
valued in the ring of dual numbers $k[\varepsilon]$ over $k$ consists of $\OO$-algebra homomorphisms 
$\Theta_n: \OO[G^n]^{G^0} \rightarrow \mathcal C(\Gamma^n, k[\varepsilon])$ for 
$n\ge 1$ satisfying certain compatibility relations, where $\mathcal C(\Gamma^n, k[\varepsilon])$ is the set of continuous maps. Proving finiteness assertions such as finite dimensionality of the tangent space amounts to showing that finitely many $\Theta_n$ determine $\Theta$ uniquely. This is feasible when the group is finitely 
generated, but the argument runs into trouble when this assumption is dropped. A similar problem arises in \Cref{main_rigid_intro}. One has to show that for 
an affinoid algebra $A$ a continuous $A$-valued $G$-pseudocharacter  $\Theta$ takes 
values in some formal model of $A$. It is easy to show that each $\Theta_n$ takes 
values in $\mathcal C(\Gamma^n, \mathcal A_n)$ for some formal model $\mathcal A_n$ 
of $A$, but it is not clear that there is a formal model that works for all $\Theta_n$. 
\subsection{What we actually do}
Let us explain the proof of \Cref{intro_main}. We first prove \Cref{intro_main} for finitely generated free profinite groups $\Gamma$. We show that the map between 
deformation rings $R^{\ps}_{\Thetabar} \rightarrow R^{\square}_{\rhobar}$, 
induced by mapping a deformation of $\rhobar$ to its $G$-pseudocharacter, is 
injective. This  key step is carried out in  \Cref{sec_free}. Its proof uses the fact that one may describe 
the moduli space of $G$-pseudocharacters of a discrete finitely generated free group $F$ as a GIT-quotient of a moduli space of representations of $F$ into $G$. We show that if $\Gamma$ is the profinite completion of $F$ then the rings $R^{\ps}_{\Thetabar}$ and $R^{\square}_{\rhobar}$
are naturally isomorphic to completions of the local rings at points $\Theta|_F$ and $\rhobar|_F$ in the 
respective moduli spaces. The assertion about injectivity boils down to commutative algebra, which is carried out in \Cref{sec_GIT}.  We note that if we drop the 
assumption that $\Gamma$ is a  free profinite group then 
it is not known\footnote{Carl Wang-Erickson has pointed out to us that Jinyue Luo 
\cite{jinyue} has constructed an example, where the map fails to be  injective.} whether $R^{\ps}_{\Thetabar}\rightarrow R^{\square}_{\rhobar}$ is injective, even when $G=\GL_n$, see \cite[Remark 1.25]{che_durham}.  

It is easy to see that the universal deformation 
$\rho^{\univ}: \Gamma\rightarrow G(R^{\square}_{\rhobar})$ factors through $Q$ 
and the injectivity of the map $R^{\ps}_{\Thetabar} \rightarrow R^{\square}_{\rhobar}$ implies \Cref{intro_main} for finitely generated  
free profinite groups $\Gamma$. From this one may deduce \Cref{intro_main} 
for all finitely generated profinite groups $\Gamma$. 

If $\Theta$ is a continuous $G$-pseudocharacter of $\Gamma$ valued in a Hausdorff
topological ring $A$ then there is a closed normal subgroup $\Ker(\Theta)$ of $\Gamma$, whose definition we recall in \Cref{sec_ker}, such that $\Theta$ factors through $\Gamma/\Ker(\Theta)$. \Cref{intro_main} holds if and only if $K/\Ker(\Theta^u)$ is a pro-$p$ group, where $\Theta^u$ is the universal deformation of $\Thetabar$. A profinite group is pro-$p$ if and only if 
every closed finitely generated subgroup is pro-$p$. This allows us to prove 
that $K/\Ker(\Theta^u)$ is pro-$p$ for general $\Gamma$ by considering the restrictions of $\Theta^u$ to closed finitely generated 
subgroups of $\Gamma$ and using the fact that we have already proved  \Cref{intro_main}
for such groups. 

\subsection{Motivation}  The Hermite--Minkowski theorem implies that the Galois group $\Gal(F_S/F)$, where
$F$ is a number field and $F_S$ is a maximal extension of $F$ unramified 
outside a finite set of primes $S$ of $F$, satisfies $\Phi_p$.
It is not known if these groups are topologically finitely generated. 
The deformation rings of representations of $\Gal(F_S/F)$ are frequently encountered in arithmetic applications and $\Gamma=\Gal(F_S/F)$ is the main example that we have in mind in this paper.

In \cite{defG} we used the moduli space $X^{\gen, \Gamma}_{\Thetabar}$ to study 
the deformation rings $R^{\square}_{\rho}$ of representations $\rho: \Gamma \rightarrow G(k)$ such that 
$\Theta_{\rho}=\Thetabar$, when $\Gamma$ is absolute Galois group of a 
$p$-adic local field.  We expect the results of this paper to be useful, when
studying analogous questions for $\Gamma=\Gal(F_S/F)$ for a number field $F$. 
\subsection{Acknowledgements} V.P. thanks Ulrich G\"ortz and Marc Levine
for a stimulating discussion regarding \Cref{sec_GIT}. The authors thank Gebhard B\"ockle, Toby Gee and Carl Wang-Erickson for their comments and the referee for their careful reading of the paper. 

The research of J.Q. was funded by the Deutsche Forschungsgemeinschaft (DFG, German Research Foundation) – project number 517234220.

 \section{Lafforgue's pseudocharacters}\label{laf}

In this section we recall the definition of pseudocharacters and their deformation theory.  Until \Cref{sec_pseudo_def}  we let $\OO$ be any commutative ring.

A \emph{generalised reductive group} $G$ over $\OO$ is a 
smooth affine group scheme, that has reductive geometric fibres and such that $G/G^0$ is finite.

\begin{defi}\label{LafPC} Let $\Gamma$ be an abstract group and let $A$ be a commutative $\OO$-algebra. An \emph{$G$-pseudocharacter} $\Theta$ of $\Gamma$ over $A$ is a sequence $(\Theta_n)_{n \geq 1}$ of $\OO$-algebra maps
$$\Theta_n : \OO[G^n]^{G^0} \to \mathrm{Map}(\Gamma^n,A)$$ for $n \geq 1$, satisfying the following conditions\footnote{Here $G$ acts on $G^n$ by $g \cdot (g_1, \dots, g_n) = (gg_1g^{-1}, \dots, gg_ng^{-1})$. This induces a rational action of $G$ on the affine coordinate ring $\OO[G^n]$ of $G^n$. The submodule $\OO[G^n]^{G^0} \subseteq \OO[G^n]$ is defined as the algebraic invariant module of the $G^0$-representation $\OO[G^n]$. It is an $\OO$-subalgebra, since $G$ acts by $\OO$-linear automorphisms.}:
\begin{enumerate}
    \item For each $n,m \geq 1$, each map $\zeta : \{1, \dots, m\} \to \{1, \dots,n\}$, $f \in \OO[G^m]^{G^0}$ and $\gamma_1, \dots, \gamma_n \in \Gamma$, we have
    $$ \Theta_n(f^{\zeta})(\gamma_1, \dots, \gamma_n) = \Theta_m(f)(\gamma_{\zeta(1)}, \dots, \gamma_{\zeta(m)}) $$
    where $f^{\zeta}(g_1, \dots, g_n) = f(g_{\zeta(1)}, \dots, g_{\zeta(m)})$.
    \item For each $n \geq 1$, for each $\gamma_1, \dots, \gamma_{n+1} \in \Gamma$ and each $f \in \OO[G^n]^{G^0}$, we have
    $$ \Theta_{n+1}(\hat f)(\gamma_1, \dots, \gamma_{n+1}) = \Theta_n(f)(\gamma_1, \dots, \gamma_{n-1}, \gamma_n\gamma_{n+1}) $$
    where $\hat f(g_1, \dots, g_{n+1}) = f(g_1, \dots, g_{n-1}, g_ng_{n+1})$.
\end{enumerate}
\end{defi}
We denote the set of $G$-pseudocharacters of $\Gamma$ over $A$ by $\PC_G^{\Gamma}(A)$.
If $f : A \to B$ is a homomorphism of $\OO$-algebras, then there is an induced map $f_* : \mathrm{PC}_{G}^{\Gamma}(A) \to \mathrm{PC}_{G}^{\Gamma}(B)$.
For $\Theta \in \mathrm{PC}_{G}^{\Gamma}(A)$, the image $f_*(\Theta)$ is called the \emph{specialisation} of $\Theta$ along $f$ and is denoted by $\Theta \otimes_A B$.
It is easy to verify that specialisation along $f : A \to B$ commutes with composition with $\varphi$, i.e. $(\varphi \circ \Theta) \otimes_A B = \varphi \circ (\Theta \otimes_A B)$.

The functor $A \mapsto \PC^{\Gamma}_G(A)$ is representable by an $\OO$-algebra \cite[Theorem 3.19]{quast}.
Let us write $\Rep^{\Gamma}_G(A) := \Hom(\Gamma, G(A))$.
The functor $A \mapsto \Rep^{\Gamma}_G(A)$ is also representable by an $\OO$-algebra.
To a representation $\rho : \Gamma \to G(A)$,
we associate a $G$-pseudocharacter $\Theta_{\rho} \in \PC^{\Gamma}_G(A)$ by the formula
\begin{equation}\label{rep_to_pc}
\Theta_{\rho, m}(f)(\gamma_1, \dots, \gamma_m) := f(\rho(\gamma_1), \dots, \rho(\gamma_m))
\end{equation}
for all $m \geq 1$, all $f \in \OO[G^m]^{G^0}$ and all $\gamma_1, \dots, \gamma_m \in \Gamma$.
The map $\rho \mapsto \Theta_{\rho}$ defines a morphism of schemes $\Rep^{\Gamma}_G \to \PC^{\Gamma}_G$,
which is $G^0$-equivariant for the conjugation action on the source and the trivial action on the target and hence induces a morphism from the GIT-quotient
$\Rep^{\Gamma}_G\sslash G^0\rightarrow \PC^{\Gamma}_G$. 

\subsection{Continuous pseudocharacters}
If $\Gamma$ is a topological group, $A$ is a topological ring and, for all $n \geq 1$ the map $\Theta_n$ has image in the set $\Cont(\Gamma^n, A)$ of continuous maps $\Gamma^n \to A$, then we say that $\Theta$ is \emph{continuous}. We denote the set of continuous $G$-pseudocharacters of $\Gamma$ with values in $A$ by $\cPC_G^{\Gamma}(A)$.
\subsection{Kernels}\label{sec_ker}  If $\Theta\in \PC^{\Gamma}_G(A)$ then 
the \textit{kernel} $\Ker(\Theta)$ of $\Theta$ is defined
as the set of all $\delta\in \Gamma$, such that for all $n\ge 1$, all $f\in \OO[G^n]^{G^0}$  and all $\gamma_1, \ldots, \gamma_n\in \Gamma$, we have
\begin{equation}
\Theta_n(f)(\gamma_1, \ldots, \gamma_n \delta)= 
\Theta_n(f)(\gamma_1, \ldots, \gamma_n).
\end{equation}
It is shown in \cite[Lemma 3.11]{quast} that $\Ker(\Theta)$ is a normal subgroup of $\Gamma$. 
\begin{lem}\label{ker_closed}Assume that $\Gamma$ is a topological group, $A$ is a topological ring and $\Theta\in \cPC^{\Gamma}_G(A)$. 
If $A$ is Hausdorff (e.g. profinite) then $\Ker(\Theta)$ is closed in $\Gamma$. 
\end{lem}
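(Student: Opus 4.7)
The plan is to exhibit $\Ker(\Theta)$ as an intersection of closed subsets of $\Gamma$. Concretely, for each choice of $n \geq 1$, $f \in \OO[G^n]^{G^0}$ and $\gamma_1, \ldots, \gamma_n \in \Gamma$, I would consider the map
\[
\phi_{n,f,\gamma_1,\ldots,\gamma_n} : \Gamma \longrightarrow A, \qquad \delta \longmapsto \Theta_n(f)(\gamma_1, \ldots, \gamma_{n-1}, \gamma_n \delta),
\]
and observe that $\Ker(\Theta)$ is precisely the intersection, over all such tuples $(n, f, \gamma_1, \ldots, \gamma_n)$, of the fibres
\[
\phi_{n,f,\gamma_1,\ldots,\gamma_n}^{-1}\bigl(\Theta_n(f)(\gamma_1, \ldots, \gamma_n)\bigr).
\]

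The first key step is to verify that each $\phi_{n,f,\gamma_1,\ldots,\gamma_n}$ is continuous. The map $\delta \mapsto (\gamma_1, \ldots, \gamma_{n-1}, \gamma_n \delta)$ from $\Gamma$ to $\Gamma^n$ is continuous by continuity of left multiplication in the topological group $\Gamma$, and $\Theta$ is continuous by hypothesis, so $\Theta_n(f) : \Gamma^n \to A$ is continuous. The composition $\phi_{n,f,\gamma_1,\ldots,\gamma_n}$ is therefore continuous.

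The second key step uses that $A$ is Hausdorff: any singleton $\{a\} \subseteq A$ is closed, so each fibre above is the preimage of a closed subset under a continuous map and hence is closed in $\Gamma$. An arbitrary intersection of closed sets is closed, so $\Ker(\Theta)$ is closed. Normality was already established in \cite[Lemma 3.11]{quast}, so nothing further is needed.

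There is no serious obstacle here; the argument is essentially a bookkeeping exercise that unpacks the definition of $\Ker(\Theta)$ and uses only the continuity of the pseudocharacter, the continuity of the group operation, and the Hausdorff hypothesis on $A$.
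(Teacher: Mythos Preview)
Your proof is correct and essentially identical to the paper's: the paper considers the continuous map $\delta \mapsto \Theta_n(f)(\gamma_1,\ldots,\gamma_n\delta) - \Theta_n(f)(\gamma_1,\ldots,\gamma_n)$ and takes the preimage of $\{0\}$, which is the same closed set as your fibre $\phi_{n,f,\gamma_1,\ldots,\gamma_n}^{-1}(\Theta_n(f)(\gamma_1,\ldots,\gamma_n))$. The argument is otherwise word-for-word the same.
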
 
\begin{proof} For  fixed $(\gamma_1, \ldots, \gamma_n)\in \Gamma^n$ and $f\in \OO[G^n]^{G^0}$ the map 
\begin{equation}
\Gamma \rightarrow A, \quad \delta\mapsto \Theta_n(f)(\gamma_1, \ldots, \gamma_n \delta)-
\Theta_n(f)(\gamma_1, \ldots, \gamma_n)
\end{equation}
is continuous. If $A$ is Hausdorff then the 
preimage of $\{0\}$ is a closed subset of $\Gamma$. 
Since $\Ker(\Theta)$ is the intersection of all such subsets 
we obtain the assertion.
\end{proof} 

\subsection{Deformations of $G$-pseudocharacters}\label{sec_pseudo_def} Let $\OO$ be
the ring of integers in a finite extension $L$ of $\Qp$ with residue field $k$.
Let $\Gamma$ be a profinite group and let   $\Thetabar \in \cPC^{\Gamma}_G(k)$.
We recall a deformation problem of $\Thetabar$ introduced in  \cite[Section 5]{quast}.  Let $\Aa_{\OO}$ be the category of artinian local $\OO$-algebras with residue field $k$.
We define the \emph{deformation functor} of $\Thetabar$
$$ \Def_{\Thetabar} : \Aa_{\OO} \to \Set, ~A \mapsto \{\Theta \in \cPC_G^{\Gamma}(A) \mid \Theta \otimes_A k = \Thetabar\} $$
that sends an object $A \in \Aa_{\OO}$ to the set of continuous $G$-pseudocharacters $\Theta$ of $\Gamma$ over $A$ with $\Theta \otimes_A k = \Thetabar$.
It is pro-representable by a  local profinite $\OO$-algebra $R^{\ps}_{\Thetabar}$ with residue field $k$ by
\cite[Theorem 5.4]{quast}.

We will add a superscript $\Gamma$ and write $\Def^{\Gamma}_{\Thetabar}$, $R^{\ps, \Gamma}_{\Thetabar}$ to emphasise that  we are working with pseudocharacters of $\Gamma$, and will drop the superscript if the context is clear. 

Denote by $\Theta^u \in \cPC^{\Gamma}_G(R^{\ps}_{\Thetabar})$ the universal deformation of $\Thetabar$. If 
$A$ is an $R^{\ps}_{\Thetabar}$-algebra we will write $\Theta^u_{|A}$ for 
the specialisation of $\Theta^u$ along $R^{\ps}_{\Thetabar} \rightarrow A$. Since $R^{\ps}_{\Thetabar}$ is profinite and hence Hausdorff, $\Ker(\Theta^u)$ is a closed normal subgroup of $\Gamma$ by \Cref{ker_closed}.

\section{Geometric invariant theory}\label{sec_GIT}

We first recall the set up of \cite{seshadri}. Let $\OO$ be a quasi-excellent ring and let $S=\Spec \OO$. Then $\OO$ is Nagata \cite[\href{https://stacks.math.columbia.edu/tag/07QV}{Tag 07QV}]{stacks-project}
and hence universally Japanese \cite[\href{https://stacks.math.columbia.edu/tag/0334}{Tag 0334}]{stacks-project}. The last assumption is imposed in  \cite{seshadri}. 
Let $G$ be a reductive group scheme over $S$, so that $G$ is an affine group scheme over $S$, $G\rightarrow S$ is smooth and the geometric fibres are connected reductive groups. 
 
Let $V$ be a free $\OO$-module of finite rank $r$ endowed with a $G$-module structure, let $\check{V}=\Hom_{\OO}(V, \OO)$ and let $\Sym(\check{V})$ be the symmetric algebra over $\OO$. The $G$-module structure on $V$ induces an action of $G$ on $\Spec(\Sym(\check{V})) = \mathbb A^r_S.$ Let $X$ be a closed $G$-invariant subscheme of $\Spec (\Sym(\check{V}))$. The $G$-action on $X$ induces an action on $B$, the ring of functions on $X$. The GIT quotient $X\sslash G$ is represented by the ring of invariants $B^G$.
Below we let $R$ be a $B^G$-algebra, which we assume to be a quasi-excellent ring. In the application, both $\OO$ and $R$ will be complete noetherian local rings, and these
are quasi-excellent \cite[\href{https://stacks.math.columbia.edu/tag/07QW}{Tag 07QW}]{stacks-project}.

\begin{lem}\label{finite} If $R$ is a flat $B^G$-algebra or a flat $\OO$-algebra then the ring $(R\otimes_{B^G} B)^G$ is a finite $R$-module. 
\end{lem}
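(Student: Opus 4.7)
The plan is to prove the two cases separately, using very different tools in each.

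The first case ($R$ flat over $B^G$) is immediate from the fact that, for (geometrically) reductive group schemes, formation of $G$-invariants commutes with flat base change on the invariant ring. This is part of the theory of good quotients developed in \cite{seshadri}, and yields $(R \otimes_{B^G} B)^G = R \otimes_{B^G} B^G = R$, which is clearly a finite $R$-module.

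For the second case ($R$ flat over $\OO$) the plan is to introduce the auxiliary $G$-algebra $A := R \otimes_\OO B$, with trivial $G$-action on $R$. Since $R$ is $\OO$-flat, the same flat base change principle gives $A^G = R \otimes_\OO B^G$. There is a natural surjection of $G$-algebras $A \twoheadrightarrow R \otimes_{B^G} B$ whose $G$-invariant kernel $J$ is generated by the elements $\phi(b) \otimes 1 - 1 \otimes b$ for $b \in B^G$, where $\phi \colon B^G \to R$ is the structure map. A direct computation shows that the image of $A^G = R \otimes_\OO B^G$ inside $A/J = R \otimes_{B^G} B$ coincides with the image of the structure map $R \to R \otimes_{B^G} B$, i.e.\ it is just $R$ itself (sitting as $R \otimes 1$). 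At this point one invokes the algebraic form of geometric reductivity of $G$: for any $G$-invariant ideal $J$ of a $G$-algebra $A$ and any $\bar x \in (A/J)^G$, some power $\bar x^n$ lifts to $A^G$. In our situation this forces $\bar x^n \in R$, so $\bar x$ satisfies the monic polynomial $X^n - \bar x^n$ over $R$; hence $(R \otimes_{B^G} B)^G$ is integral over $R$. Combined with Seshadri's finiteness theorem applied with $R$ as the Nagata base (ensuring $(R \otimes_{B^G} B)^G$ is a finitely generated $R$-algebra) and the fact that $R$ is Noetherian, one concludes that $(R \otimes_{B^G} B)^G$ is a finite $R$-module.

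The main obstacle is the integrality step in the second case: one must carefully invoke the correct algebraic formulation of geometric reductivity for reductive group schemes over an arbitrary base, as provided by the Haboush--Seshadri circle of results, and then verify that the image of the invariants of the ``larger'' ring $R \otimes_\OO B$ inside the quotient $R \otimes_{B^G} B$ is exactly $R$. The first case, by contrast, is a direct consequence of uniformity of the GIT quotient under flat base change over the invariant ring.
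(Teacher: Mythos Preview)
Your proof is correct and follows the same overall strategy as the paper. Case 1 is handled identically. For Case 2, both arguments realise $R \otimes_{B^G} B$ as a $G$-equivariant quotient of an ambient $G$-algebra whose invariants are computable by flat base change over $\OO$, and then observe that the image of those invariants in the quotient is just $R$.

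The differences are minor but worth recording. The paper takes the ambient algebra to be the polynomial ring $R \otimes_{\OO} \Sym(\check V)$ rather than your $R \otimes_{\OO} B$, and then cites \cite[Theorem~2(ii)]{seshadri} as a black box: invariants of a $G$-stable quotient form a finite module over the invariants of the ambient algebra, hence over $R$ once one notes the factorisation through $R$. Your route instead unpacks the mechanism behind that theorem: power-reductivity (Haboush--Seshadri) gives integrality of $(R \otimes_{B^G} B)^G$ over $R$, Seshadri's finite generation of invariants over the Nagata base $R$ gives that it is a finitely generated $R$-algebra, and the two combine with $R$ Noetherian to yield finiteness as a module. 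The paper's version is a one-line citation; yours makes the underlying reductivity argument explicit. Either choice of ambient algebra works, since both surject onto $R \otimes_{B^G} B$ and both have invariants mapping onto $R$.
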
 
\begin{proof} If $R$ is a flat $B^G$-algebra then \cite[Lemma 2]{seshadri} implies that 
\begin{equation}
(R\otimes_{B^G} B)^G \cong R \otimes_{B^G} B^G \cong R
\end{equation}
and we are done. 
If $R$ is flat over $\OO$ then \cite[Lemma 2]{seshadri} implies that 
\begin{equation}
(R\otimes_{\OO} \Sym(\check V))^G\cong 
R\otimes_{\OO} \Sym(\check V)^G.
\end{equation}
It follows from 
\cite[Theorem 2 (ii)]{seshadri} that 
$(R\otimes_{B^G} B)^G$ is a finite 
$(R\otimes_{\OO} \Sym(\check V))^G$-module. 
Since the image of $\Sym(\check V)^G$ in $B$ 
is contained in $B^G$, the action of 
$R\otimes_{\OO} \Sym(\check V)^G$ on $(R\otimes_{B^G} B)^G$ factors through the 
map $R\otimes_{\OO} \Sym(\check V)^G\rightarrow R\otimes_{\OO} B^G \rightarrow R$. Hence, 
$(R\otimes_{B^G} B)^G$ is a finite $R$-module.
\end{proof}

\begin{lem}\label{finite2} Let $R$ be a local ring with residue field $\kappa$. Assume that $R$  is either $B^G$-flat or $\OO$-flat. Let $Z$ be a closed non-empty  subscheme of $X\times_{X\sslash G} \Spec R$. If $Z$ is $G$-invariant then every irreducible component 
of $Z$ contains a point $x$ above the closed point of $\Spec R$ such that $\kappa(x)$
is finite over $\kappa$.
\end{lem}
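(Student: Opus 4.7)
The plan is to produce, for each irreducible component $W$ of $Z$, a closed point of the special fibre $W_{\kappa}:=W\times_{\Spec R}\Spec\kappa$, which will automatically have residue field finite over $\kappa$ by the Nullstellensatz once one knows that $W$ is of finite type over $R$ and that $W_{\kappa}$ is non-empty.

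First I would verify that every irreducible component $W$ of $Z$ is $G$-invariant and of finite type over $\Spec R$. The first holds because $G$ is a reductive group scheme and hence has geometrically connected fibres, so the $G$-action on the discrete set of irreducible components of $Z$ must be trivial. The second holds because $X\subseteq\Spec\Sym(\check V)$ is of finite type over $\OO$, and $X\sslash G=\Spec B^{G}$ is of finite type over $\OO$ by Seshadri's finite generation theorem, so $X\times_{X\sslash G}\Spec R$ is of finite type over $\Spec R$.

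The heart of the argument is to show that the image of $W$ in $\Spec R$ contains the closed point $\mathfrak{m}_{R}$. I would adapt the proof of \Cref{finite} to the $G$-stable ideal $J\subset C:=R\otimes_{B^{G}}B$ defining $W$: in the $\OO$-flat case Seshadri's Theorem~2(ii) applied to the $G$-equivariant surjection $R\otimes_{\OO}\Sym(\check V)\twoheadrightarrow C/J$ gives that $(C/J)^{G}$ is a finite $R\otimes_{\OO}\Sym(\check V)^{G}$-module, and exactly as in \Cref{finite} this action factors through the canonical map to $R$, so $(C/J)^{G}$ is finite as an $R$-module. Thus $W\sslash G=\Spec(C/J)^{G}$ is a finite $\Spec R$-scheme, and it is non-empty because $1\in R$ maps to a non-zero element of $(C/J)^{G}\subseteq C/J$. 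Its image in the local scheme $\Spec R$ is therefore a non-empty closed subset and so contains $\mathfrak{m}_{R}$. Surjectivity of the affine GIT quotient map $W\twoheadrightarrow W\sslash G$ (standard for reductive $G$) then shows $\mathfrak{m}_{R}$ lies in the image of $W\to\Spec R$, whence $W_{\kappa}\neq\emptyset$; the Nullstellensatz finishes the proof.

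The main obstacle I expect is the $B^{G}$-flat case of the finiteness claim for $(C/J)^{G}$: unlike the $\OO$-flat case, one cannot flatly reduce to a polynomial ring, and one instead has to combine finite generation of invariants under the reductive $G$-action on the Noetherian $R$-algebra $C/J$ of finite type with the integrality of $(C/J)^{G}$ over the image of $R=C^{G}$ in $C/J$ to conclude that $(C/J)^{G}$ is finite as an $R$-module. The quasi-excellence, and in particular Noetherianness, of $R$ ensures the needed Hilbert-basis input, and an alternative is to pass through an auxiliary $G$-equivariant closed embedding into some $\Spec(R\otimes_{\OO}\Sym(\check V'))$ to reduce the $B^{G}$-flat case formally to the $\OO$-flat one.
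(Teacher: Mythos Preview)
Your approach is correct but takes a different route from the paper. You compute $W\sslash G=\Spec(C/J)^G$ directly, show it is finite over $\Spec R$, and then use surjectivity of $W\to W\sslash G$. The paper instead applies \Cref{finite} only to the ambient space $X\times_{X\sslash G}\Spec R$, so that $\Spec C^G$ is already finite over $\Spec R$, and then invokes \cite[Theorem~3(iii)]{seshadri}: the image of the $G$-invariant closed set $W$ under the quotient map to $\Spec C^G$ is closed, hence contains a closed point, which necessarily lies over the closed point of $\Spec R$. This avoids recomputing invariants for each $W$ and so sidesteps the case distinction you encounter. Concerning the obstacle you flag in the $B^G$-flat case, your suggested fixes are heavier than needed: applying \cite[Theorem~2(ii)]{seshadri} over the universally Japanese base $R$ with $A=C$ and the cyclic $C$-module $M=C/J$ shows at once that $(C/J)^G$ is a finitely generated $C^G$-module, and $C^G$ is finite over $R$ by \Cref{finite}, uniformly in both cases. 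The integrality statement you propose instead is not obvious in mixed characteristic, where no Reynolds operator is available.
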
 
\begin{proof} \Cref{finite} implies $(R\otimes_{B^G} B)^G$ is a finite $R$-module. 
Since $R$ is a local noetherian ring this implies that every closed subscheme of 
$(X\times_{X\sslash G} \Spec R)\sslash G$ 
contains a point above the closed point of $\Spec R$. 
Since $G$ is connected every irreducible component 
of $Z$ is $G$-invariant by \cite[Lemma 2.1]{BIP_new}. Thus we may assume that $Z$ is irreducible. The image of $Z$ 
in $(X\times_{X\sslash G} \Spec R)\sslash G$
is closed by \cite[Theorem 3 (iii)]{seshadri} and thus $Z$ contains a point
$x$ above the closed point of $\Spec R$. Since $X\rightarrow X\sslash G$ is 
of finite type we may choose $x$ such that $\kappa(x)$ is finite over $\kappa$.
\end{proof} 

\begin{prop}\label{compl_inj} Let $\xbar: \Spec \Omega \rightarrow X$ be a geometric point, such that 
its $G$-orbit in $X\otimes_{\OO}\Omega$ is closed. Let $x$ be its image in $X$ and 
let $y$ be its image in $X\sslash G$. If $X$ is normal then the induced map 
on the completions of local rings $\widehat{\OO}_{X\sslash G, y} \rightarrow 
\widehat{\OO}_{X,x}$ is injective. 
\end{prop}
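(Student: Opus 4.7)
The plan is to argue by contradiction. I would first reduce to the case that $X$ is integral: since $X$ is normal, its irreducible components are disjoint, and since $G = G^0$ is connected each component is $G$-invariant, so we may replace $X$ by the component containing $x$. Then $B := \OO(X)$ is a normal domain, and so is $B^G$ (the integral closure of $B^G$ in $\mathrm{Frac}(B^G) \subseteq \mathrm{Frac}(B)$ lies in $B \cap \mathrm{Frac}(B^G) = B^G$ by $G$-invariance). Since $B^G$ is of finite type over the quasi-excellent $\OO$ it is excellent, and hence $R := \widehat{\OO}_{X\sslash G, y}$ is a normal Noetherian local domain, in particular reduced; it suffices to show that $\varphi : R \to S := \widehat{\OO}_{X, x}$ has trivial kernel.

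Suppose for contradiction that $f \in R \setminus \{0\}$ satisfies $\varphi(f) = 0$. Set $X_R := X \times_{X\sslash G} \Spec R = \Spec(R \otimes_{B^G} B)$ with projection $\pi : X_R \to \Spec R$. By \Cref{finite}, $\pi$ realises $\Spec R$ as the GIT quotient of $X_R$, and the inclusion $R \hookrightarrow R \otimes_{B^G} B$ is injective (from flatness of $R$ over $B^G$ and $B^G \hookrightarrow B$). Viewing $f$ in $R \otimes_{B^G} B$, the vanishing locus $Z := V(f \otimes 1) = \pi^{-1}(V(f)) \subsetneq X_R$ is a proper $G$-invariant closed subscheme, and it contains the scheme-theoretic image of the natural map $\iota : \Spec S \to X_R$ (induced by $R \to S$ and $B \to \OO_{X,x} \to S$).

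The core step is to use the closed-orbit hypothesis to contradict this. The heuristic is that after base change to $\Omega$, the formal completion of $X_R$ along the closed orbit $G \cdot \bar{x}$ equals the $G$-saturation of the formal completion of $X_R$ at $\bar{x}$ (by transitivity of $G$ on the orbit) and, because $G \cdot \bar{x}$ is closed in $X \otimes_\OO \Omega$, projects formally surjectively onto the completion of $\Spec(R \otimes_\OO \Omega)$ at $\bar y$ under $\pi$. This is the local structural statement at a closed orbit, analogous to Luna's slice theorem: near a closed $G$-orbit, the good quotient is a formally étale slice quotient, and $\pi$ is formally surjective. By $G$-invariance of $Z$, the element $f$ vanishes on the $G$-saturation of $\iota(\Spec S)$ and hence on the entire formal completion along $G \cdot \bar{x}$; the formal surjection then forces $f = 0$ in $R \otimes_\OO \Omega$, and consequently in $R$ after tracking the behaviour of $R$ under $\OO \to \Omega$.

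The main obstacle is the rigorous justification of this ``formal surjection onto $\Spf R$'' in the quasi-excellent mixed-characteristic context. I would attempt to establish it directly by combining \Cref{finite2} with Seshadri's closedness of images of $G$-invariant closed subschemes in the GIT quotient (used in the proof of \Cref{finite2}), tracking the dimensions of components of $Z$ and their intersections with the closed fibre of $\pi$ over the closed point of $\Spec R$. As a backup, one could invoke Artin approximation to reduce the assertion to Luna's slice theorem over algebraically closed fields of characteristic zero.
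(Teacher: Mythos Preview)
Your preliminary reductions are correct and match the paper: reducing to $X$ integral, showing $B^G$ is normal, and deducing that $R=\widehat{\OO}_{X\sslash G,y}$ is a normal noetherian local domain.

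The gap is your ``core step''. What you are asserting is that the $G$-saturation of the image of $\Spec S$ is formally dense in $X_R$, i.e.\ a Luna-type \'etale slice statement at the closed orbit $G\cdot\bar x$. But the residue field of $y$ may have characteristic $p$, and in that regime Luna's theorem is simply false in general (reductive groups are not linearly reductive); Artin approximation does not change the residue characteristic, so your backup does not help. Your other suggestion, applying \Cref{finite2} to $Z=\pi^{-1}(V(f))$ and ``tracking dimensions'', cannot get off the ground: since $\varphi$ is local and $\varphi(f)=0$, necessarily $f\in\mm_R$, so $V(f)$ contains the closed point of $\Spec R$ and $Z$ already contains the \emph{entire} closed fibre of $\pi$. \Cref{finite2} then gives no information about $Z$.

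The paper's argument avoids formal neighbourhoods altogether by passing to the \emph{generic} fibre. Let $Q=\mathrm{Frac}(R)$; then $X_Q=\Spec(Q\otimes_{B^G}B)$ is nonempty, and one picks an irreducible component $Z'$ of $X_Q$ with generic point $\qq'$. Its closure $Z$ in $X_R$ is $G$-invariant, so by \Cref{finite2} it meets the closed fibre; choosing in that intersection a point whose $G$-orbit is closed and invoking Seshadri's uniqueness of closed orbits in a GIT fibre, one finds $\bar x\in Z$. Hence the generic point $\qq''=\qq'\cap B_R$ of $Z$ lies in $\Spec(B_R)_{\nn}$, where $\nn$ corresponds to $\bar x$. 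Since $R\subset Q\subset B_Q/\qq'\subset\kappa(\qq')=\kappa(\qq'')$, the composite $R\to(B_R)_{\nn}\to\kappa(\qq'')$ is injective; as $\widehat{(B_R)_{\nn}}\cong S$, this gives $R\hookrightarrow S$. The closed-orbit hypothesis is used only once, to identify the closed orbit in the special fibre with $G\cdot\bar x$, and no slice theorem is needed.
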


\begin{proof} Let $\pp$ be the prime ideal of $B^G$ corresponding to $y$ and let $R$ 
be the completion of the localisation $(B^G)_{\pp}$. 
We note that $R$ is  complete local noetherian and hence quasi-excellent and also $B^G$-flat. 
Since $B$ is normal, $B^G$ and $(B^G)_{\pp}$ are normal. Since $\OO$ is quasi-excellent and $B^G$ is of finite type over $\OO$ by \cite[Theorem 2]{seshadri}, $(B^G)_{\pp}$ is quasi-excellent \cite[\href{https://stacks.math.columbia.edu/tag/07QU}{Tag 07QU}]{stacks-project}. Since $(B^G)_{\pp}$ 
is quasi-excellent and normal, its completion $R$ is also normal \cite[\href{https://stacks.math.columbia.edu/tag/0C23}{Tag 0C23}]{stacks-project}, and hence an integral domain.

Let $Q$ be the fraction field of $R$. 
Since $R$ is $B^G$-flat and $Q$ is $R$-flat, $R$ is a subring of $B_R:=R\otimes_{B^G} B$ and $Q$  is a subring of $B_{Q}:=Q\otimes_{B^G} B$. In particular, $X_R:=\Spec B_R$ and $X_Q:=\Spec B_Q$ 
are non-empty. 
Let $Z'$ be an irreducible 
component of $X_Q$
corresponding to a prime ideal $\qq'$ of 
$B_Q$. Then $Z'$ is $G$-invariant by \cite[Lemma 2.1]{BIP_new}. 
Let $Z$ be the closure of $Z'$ in $X_R$. Then $Z$ is a closed  $G$-invariant
subset of $X_R$,  corresponding to the prime ideal $\qq''= \qq'\cap B_R$.

\Cref{finite2} implies that there is $\xbar_1\in Z(\Omega)$ above $y$. Let 
$\xbar_2\in Z(\Omega)$ 
be a point  contained in the closure of the $G$-orbit 
of $\xbar_1$ in $Z\otimes_R \Omega$, such that its $G$-orbit has minimal dimension. Then $\xbar_2\in Z(\Omega)$ lies above $y$ and its $G$-orbit is closed. It follows from \cite[Theorem 3 (ii)]{seshadri} that $\xbar$ and $\xbar_2$ lie in the same $G$-orbit and thus $\xbar\in Z(\Omega)$. 

 Let $\nn$ be a prime ideal of $B_R$ corresponding to the image of $\xbar$ in $X_R$ and  let $\qq$ be the prime ideal of $B$ corresponding to $x$. We have to show that the natural map $R\rightarrow \widehat{B}_{\qq}$ is injective. 
 Since $R$ is the completion of $(B^G)_{\pp}$ the 
completions of $(R\otimes_{B^G} B)_{\nn}$ and 
 $B_{\qq}$ coincide. Thus it is enough to 
show that the map $R\rightarrow ( B_R)_{\nn}$ is injective. 

By construction we have inclusions $R\subset Q\subset B_Q/\qq'\subset \kappa(\qq')$. Thus the composition $R\rightarrow ( B_R)_{\nn}\rightarrow (B_R/\qq'')_{\nn} \subset \kappa(\qq')$ is injective, which implies that $R\rightarrow (B_R)_{\nn}$ is injective. 
\end{proof}

\section{Finitely generated free profinite groups}\label{sec_free}
Let $F$ be a free group on $r$ generators
$\gamma_1, \ldots, \gamma_r$. 
Let $L$ be a finite extension
of $\Qp$ with the ring of integers $\OO$ and residue field $k$. Let $G$ be 
a generalised reductive group scheme over $\OO$.

\begin{prop}\label{finite_inv}The natural map $\Rep^F_G\sslash G^0 \rightarrow \PC^F_G$ is an isomorphism. 
\end{prop}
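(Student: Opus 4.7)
My plan is to test the assertion on functors of points. Since $F$ is free on $r$ generators $\gamma_1, \ldots, \gamma_r$, one has $\Rep_G^F = G^r$ as $\OO$-schemes, so $\Rep_G^F \sslash G^0 = \Spec \OO[G^r]^{G^0}$. For each commutative $\OO$-algebra $A$, the assignment $\rho \mapsto \Theta_\rho$ is $G^0$-invariant and induces a natural map
$$(\Rep_G^F \sslash G^0)(A) = \Hom_{\OO\text{-alg}}(\OO[G^r]^{G^0}, A) \longrightarrow \PC_G^F(A),$$
and the goal is to show this is bijective functorially in $A$.

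To produce the inverse bijection, I send $\Theta \in \PC_G^F(A)$ to the $\OO$-algebra map $\varphi_\Theta : \OO[G^r]^{G^0} \to A$, $f \mapsto \Theta_r(f)(\gamma_1, \ldots, \gamma_r)$ (well defined because $\Theta_r$ is a ring homomorphism, and evaluation at a point is too). In the other direction, from $\varphi$ I build a candidate pseudocharacter $\Theta_\varphi$ as follows: each tuple $(w_1, \ldots, w_n) \in F^n$ determines a $G^0$-equivariant morphism $w : G^r \to G^n$, $(g_1, \ldots, g_r) \mapsto (w_1(g), \ldots, w_n(g))$, hence a pullback $w^* : \OO[G^n]^{G^0} \to \OO[G^r]^{G^0}$, and I set $\Theta_{\varphi,n}(f)(w_1, \ldots, w_n) := \varphi(w^*(f))$. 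Axioms (1) and (2) hold for $\Theta_\varphi$ by functoriality of $w^*$ with respect to composition of morphisms of $\OO$-schemes: axiom (1) encodes compatibility with the coordinate morphism $G^n \to G^m$ induced by $\zeta$, and axiom (2) encodes compatibility with the multiplication $G^{n+1} \to G^n$ on the last two coordinates.

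The composition $\varphi \mapsto \Theta_\varphi \mapsto \varphi_{\Theta_\varphi}$ is immediately the identity, since the generator tuple $(\gamma_1, \ldots, \gamma_r)$ gives $w = \id_{G^r}$. The other composition being the identity reduces to the key identity
$$\Theta_n(f)(w_1, \ldots, w_n) = \Theta_r(w^*(f))(\gamma_1, \ldots, \gamma_r)$$
for every $\Theta \in \PC_G^F(A)$, every $(w_1, \ldots, w_n) \in F^n$, and every $f \in \OO[G^n]^{G^0}$, asserting that $\Theta$ is determined by the restriction of $\Theta_r$ to the fundamental tuple of generators.

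I plan to prove this key identity by induction on the total word length of the tuple $(w_1, \ldots, w_n)$. Axiom (1) handles permutations, duplications and deletions of entries, while axiom (2) peels off a terminal positive letter from each word via $w_n = w_n' \gamma_j$. The main obstacle is handling the negative letters $\gamma_j^{-1}$: the axioms as stated do not reference the inversion morphism $\iota : G \to G$ directly, so one cannot simply invoke functoriality. To cope with this, I plan to exploit that $\iota$ is $G^0$-equivariant (so that $\iota^*$ preserves $G^0$-invariants) combined with consequences of axiom (2) such as $\Theta_2(\hat f)(\gamma, \gamma^{-1}) = \Theta_1(f)(1)$ to match the $\gamma_j^{-1}$ contribution on the two sides; this is the delicate step that makes the comparison between the pseudocharacter axioms and the group-scheme structure of $G$ go through.
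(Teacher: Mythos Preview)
Your approach is more hands-on than the paper's. The paper simply invokes \cite[Theorem 3.20]{quast}, which identifies the ring representing $\PC^F_G$ with the colimit $\colim_{F_m\to F}\OO[G^m]^{G^0}$ over the category of finitely generated free groups equipped with a homomorphism to $F$; since this indexing category has a terminal object (the identity $F_r\eqto F$), the colimit collapses to $\OO[G^r]^{G^0}$ and the proof is over. What you are doing is essentially reproving the relevant special case of that cited theorem directly.

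The entire content, in both approaches, is the step you flag as delicate: compatibility with inversion. Your proposed tool, the identity $\Theta_2(\hat f)(\gamma,\gamma^{-1})=\Theta_1(f)(1)$ together with $G^0$-equivariance of $\iota$, does not by itself do the job: that identity constrains a single value of $\Theta_2$ but does not let you compute $\Theta$ at a tuple containing $\gamma_j^{-1}$ from $\Theta$ at tuples without negative letters. The mechanism that works is the following. The shearing map $\mu\colon G^2\to G^2$, $(g,h)\mapsto(gh,h)$, is a $G^0$-equivariant \emph{isomorphism} of schemes, so $\mu^*$ is an \emph{automorphism} of $\OO[G^2]^{G^0}$. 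Axioms (1) and (2) together (permute, apply $\hat{\ }$, permute back, then diagonalise the last two variables) give
\[
\Theta_2(\mu^*f)(a,b)=\Theta_2(f)(ab,b)
\]
for all $a,b\in F$. Taking $a=\gamma_1\gamma_2^{-1}$, $b=\gamma_2$ and substituting $f=(\mu^*)^{-1}f'=(\mu^{-1})^*f'$ yields
\[
\Theta_2(f')(\gamma_1\gamma_2^{-1},\gamma_2)=\Theta_2\bigl((\mu^{-1})^*f'\bigr)(\gamma_1,\gamma_2),
\]
which is precisely the inductive step that removes one negative letter by pushing it into the function. Iterating this (and its obvious $n$-variable analogues) reduces any word tuple to the generator tuple and establishes your key identity. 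With this correction your plan goes through; without it the inversion step is a genuine gap.
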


\begin{proof}
    By \cite[Theorem 3.20]{quast}, $\PC^F_G$ is representable by $\colim\nolimits_{F_m \to F} \OO[G^m]^{G^0}$, where the colimit is indexed by the category of finitely generated free groups $F_m$ together with a homomorphism $F_m \to F$ and homomorphisms of free groups commuting with the homomorphisms to $F$. This category has a terminal object, given by an isomorphism $F_r \eqto F$. It follows, that the colimit is isomorphic to $\OO[G^r]^{G^0}$.
\end{proof}

Let $\Thetabar\in \PC^{F}_{G}(k)$ and
let $\rhobar: F\rightarrow G(\kbar)$ be a $G$-semisimple representation such that $\Theta_{\rhobar}=\Thetabar\otimes_k \kbar$, such 
representation exists and is uniquely determined up to $G^0(\kbar)$-conjugation by \cite[Theorem 3.7]{quast}.
\begin{prop}\label{inj_prop} Let $\mm$ be a maximal ideal of $\OO(\PC^F_G)$
corresponding to $\Thetabar$ and let $\nn$ be a maximal ideal
of $\OO(\Rep^F_G)$ corresponding to $\rhobar$. Then the
morphism in \Cref{finite_inv} induces an injection on completions of local rings
$\OO(\PC^F_G)_{\mm}^{\wedge} \hookrightarrow \OO(\Rep^F_G)^{\wedge}_{\nn}$.
\end{prop}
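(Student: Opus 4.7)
The plan is to deduce \Cref{inj_prop} directly from \Cref{compl_inj}, applied with $X = \Rep^F_G$, with $G^0$ in the role of the reductive group and $\xbar = \rhobar$ in the role of the geometric point. Evaluation on the free generators of $F$ yields a $G^0$-equivariant isomorphism $\Rep^F_G \iso G^r$, where $G^0$ acts by simultaneous conjugation on the right. Choosing a faithful representation of $G$ on a finite free $\OO$-module $W$ (available because $G$ is an affine group scheme of finite type over $\OO$) gives a $G^0$-equivariant closed embedding $G^r \hookrightarrow \End(W)^r = \Spec \Sym \check{V}$ with $V = \End(W)^r$, placing $\Rep^F_G$ into the GIT framework of \Cref{sec_GIT}.

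Next, I would check the two hypotheses of \Cref{compl_inj}. Smoothness of $G$ over $\OO$, which is part of the definition of a (generalised) reductive group scheme, implies that $G^r \cong \Rep^F_G$ is smooth and in particular normal. The hypothesis that $\rhobar$ is $G$-semisimple in the sense of \cite{quast} is precisely the statement that the $G^0(\kbar)$-orbit of $\rhobar$ in $\Rep^F_G \otimes_\OO \kbar$ is Zariski closed, which is the remaining hypothesis of \Cref{compl_inj}.

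Finally, \Cref{finite_inv} identifies $\PC^F_G$ with the GIT quotient $\Rep^F_G \sslash G^0$, and the relation $\Theta_{\rhobar} = \Thetabar \otimes_k \kbar$ shows that the images $y \in \PC^F_G$ and $x \in \Rep^F_G$ of $\rhobar$ correspond respectively to $\mm$ and $\nn$. Since $\OO$ is a DVR and both schemes are of finite type over $\OO$, $\mm$ and $\nn$ cut out closed points with finite residue fields, so the completions at these maximal ideals coincide with the completions of the local rings $\OO_{\PC^F_G, y}$ and $\OO_{\Rep^F_G, x}$; \Cref{compl_inj} then yields the desired injection. The only substantive input beyond the standing setup is the closed-orbit property of $G$-semisimple representations; the geometric content of the statement is carried by \Cref{compl_inj}, and the remainder of the argument amounts to routine verification of its hypotheses.
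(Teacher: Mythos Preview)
Your proposal is correct and follows essentially the same route as the paper: identify $\Rep^F_G$ with $G^r$, invoke smoothness of $G$ over the DVR $\OO$ to get normality, use $G$-semisimplicity of $\rhobar$ to get the closed-orbit hypothesis, and then apply \Cref{compl_inj} together with \Cref{finite_inv}. The only minor imprecision is the phrase ``smooth and in particular normal'': smoothness alone does not give normality without the base being regular, so you should make explicit (as the paper does) that $\OO$ being a DVR forces $\Rep^F_G$ to be regular and hence normal.
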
 
\begin{proof} The map 
$\rho\mapsto (\rho(\gamma_1), \ldots, \rho(\gamma_r))$ induces an isomorphism of $\OO$-schemes $\Rep^{F}_G \cong G^r$. Since $G$ is smooth over $\OO$, we deduce that 
$\Rep^F_G$ is smooth over $\OO$. Since $\OO$ is a DVR we deduce that $\Rep^F_G$ is regular and hence normal. Since $\rhobar$ is $G$-semisimple the $G$-orbit of $\xbar\in \Rep^F_G(\kbar)$ corresponding to $\rhobar$ is closed, \cite[Proposition 6.7]{defG}. The assertion follows from \Cref{finite_inv} and 
\Cref{compl_inj} applied with $X=\Rep^{F}_{G}$. 
\end{proof}

There is a finite extension $k'$ of $k$ inside $\kbar$ such that all
$\rhobar(\gamma_i)$ are contained in $G(k')$. Then 
$\rhobar(F)\subset G(k')$ and since $k'$ is a finite field the homomorphism 
$\rhobar: F\rightarrow G(\kbar)$ extends to a continuous homomorphism $\rhobar: \Fhat\rightarrow G(\kbar)$, 
where $\Fhat$ is the profinite completion of $F$.
In particular, $\Thetabar\in \cPC^{\Fhat}_G(k)$.
Let $L'$ be an unramified extension of $L$ with 
residue field $k'$ and let $\OO'$ be the ring of integers in $L$. 
For $A$ in $\Aa_{\OO'}$ we let  
$D^{\square, \widehat{F}}_{\rhobar}(A)$
be the set of continuous representations $\rho:\widehat{F}\rightarrow G(A)$ congruent to $\rhobar$ modulo $\mm_A$. The functor $D^{\square, \widehat{F}}_{\rhobar}$ is pro-represented by a local profinite $\OO$-algebra $R^{\square, \Fhat}_{\rhobar}$ with residue field $k$. We employ the same notational scheme with $F$ instead of $\Fhat$, but drop the continuity condition. 

\begin{cor}\label{inject_free} The restriction maps $\Theta \mapsto \Theta|_F$, $\rho\mapsto \rho|_F$  induce  isomorphisms 
\begin{equation}
\OO(\PC^F_G)_{\mm}^{\wedge}\cong R^{\ps, \Fhat}_{\Thetabar}, \quad \OO(\Rep^F_G)_{\nn}^{\wedge}\cong R^{\square, \Fhat}_{\rhobar}
\end{equation}
and hence the map $\rho\mapsto \Theta_{\rho}$ induces an injection 
$R^{\ps, \Fhat}_{\Thetabar} \hookrightarrow R^{\square, \Fhat}_{\rhobar}$. 
\end{cor}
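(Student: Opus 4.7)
The plan is to prove the two displayed isomorphisms and then deduce the injection by transporting the injection of \Cref{inj_prop} along them. For the second isomorphism $\OO(\Rep^F_G)_{\nn}^{\wedge} \cong R^{\square, \Fhat}_{\rhobar}$ I would observe that both sides pro-represent the same deformation functor on $\Aa_{\OO'}$: for any $A \in \Aa_{\OO'}$ the residue field $k'$ is finite, so $G(A)$ is finite; hence every homomorphism $F \to G(A)$ factors through a finite quotient of $F$ and extends uniquely to a continuous homomorphism $\Fhat \to G(A)$, while conversely any continuous homomorphism is determined by its restriction to the dense subgroup $F$. This identifies lifts of $\rhobar|_F$ with continuous lifts of $\rhobar$ functorially in $A$.

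For the first isomorphism I would study the map $\varphi \colon R_1 := \OO(\PC^F_G)_{\mm}^{\wedge} \to R_3 := R^{\ps, \Fhat}_{\Thetabar}$ induced by restriction of the universal continuous pseudocharacter from $\Fhat$ to $F$, and show it is both injective and surjective. With $R_2 := \OO(\Rep^F_G)_{\nn}^{\wedge}$ and $R_4 := R^{\square, \Fhat}_{\rhobar}$, I would consider the commutative square
\begin{equation*}
\begin{tikzcd}
R_1 \arrow[r, hook] \arrow[d, "\varphi"'] & R_2 \arrow[d, "\cong"] \\
R_3 \arrow[r] & R_4
\end{tikzcd}
\end{equation*}
whose top arrow is the injection of \Cref{inj_prop}, whose right arrow is the isomorphism just established, and whose bottom arrow sends a pseudocharacter deformation to the pseudocharacter attached to the universal framed deformation. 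Commutativity amounts to the statement that under the identification $R_2 \cong R_4$ the universal representation of $F$ over $R_2$ is the restriction to $F$ of the universal continuous representation of $\Fhat$ over $R_4$, which is essentially tautological from how the identification is defined. Injectivity of the top row then forces $\varphi$ to be injective.

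For surjectivity I would apply complete Nakayama: both $R_1$ and $R_3$ are complete local noetherian $\OO$-algebras with residue field $k$ ($R_1$ because $\OO(\PC^F_G) = \OO[G^r]^{G^0}$ is of finite type over $\OO$ by \Cref{finite_inv}, and $R_3$ by the noetherianity result of \cite{quast} for finitely generated profinite groups), so it suffices to check surjectivity on cotangent spaces, dually the injectivity of the restriction map on tangent spaces $\Def^{\Fhat}_{\Thetabar}(k[\varepsilon]) \to \Def^F_{\Thetabar}(k[\varepsilon])$. This is immediate since $k[\varepsilon]$ is finite hence Hausdorff, and continuous pseudocharacters of $\Fhat$ valued in a Hausdorff ring are determined by their restriction to the dense subgroup $F$. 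The displayed injection $R^{\ps, \Fhat}_{\Thetabar} \hookrightarrow R^{\square, \Fhat}_{\rhobar}$ then falls out by transporting the injection $R_1 \hookrightarrow R_2$ along the two established isomorphisms, so the main obstacle, though mild, is just the commutativity verification in the square above.
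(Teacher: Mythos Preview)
Your proof is correct and follows essentially the same route as the paper's. The only cosmetic differences are the order of the two halves (you prove injectivity of $\varphi$ first and then surjectivity, while the paper does the reverse) and the level of detail: the paper simply asserts that the injection $\Def^{\Fhat}_{\Thetabar}(A)\hookrightarrow \Def^F_{\Thetabar}(A)$ for all $A\in\Aa_{\OO}$ ``yields a surjection'' $R_1\twoheadrightarrow R_3$, whereas you make the underlying Nakayama step explicit (and correctly note that the noetherianity of $R_3$ from \cite{quast} is needed here); likewise your commutative square is exactly the paper's factorisation $R_1\twoheadrightarrow R_3\to R_4\cong R_2$ of the injection of \Cref{inj_prop}, just drawn out.
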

\begin{proof} Any $A\in \Aa_{\OO'}$ is a finite ring, and hence $G(A)$ is a finite group. 
Thus the restriction map $D^{\square, \Fhat}_{\rhobar}(A)\rightarrow D^{\square, F}_{\rhobar}(A)$, $\rho \mapsto \rho|_{F}$ is bijective, which implies that the induced map $R^{\square, F}_{\rhobar}\rightarrow R^{\square, \Fhat}_{\rhobar}$ is an isomorphism. Since $\OO(\Rep^F_G)$ is noetherian, and $\kappa(\nn)$ is
a finite field, $D^{\square, F}_{\rhobar}$ is pro-represented by $\OO(\Rep^F_G)_{\nn}^{\wedge}$.

Similarly $\Def^F_{\Thetabar}$ is pro-represented 
by $\OO(\PC^F_G)_{\mm}^{\wedge}$. Since $F$ is dense in $\Fhat$ the restriction $\Theta\mapsto \Theta|_F$ induces an injection $\Def^{\Fhat}_{\Thetabar}(A)\hookrightarrow \Def^F_{\Thetabar}(A)$  for all $A\in \Aa_{\OO}$
by \cite[Lemma 3.2]{quast}. This yields a 
surjection 
$\OO(\PC^F_G)_{\mm}^{\wedge}\twoheadrightarrow R^{\ps, \Fhat}_{\Thetabar}$. 

Since the composition 
\begin{equation}
\OO(\PC^F_G)_{\mm}^{\wedge}\twoheadrightarrow R^{\ps, \Fhat}_{\Thetabar}\rightarrow R^{\square, \Fhat}_{\rhobar}\cong \OO(\Rep^F_G)_{\nn}^{\wedge}
\end{equation}
is injective by \Cref{inj_prop} we obtain the assertion.
\end{proof} 

\begin{cor}\label{Q_again}  Let $Q$ be a quotient of $\Fhat$ fitting in the exact sequence
\begin{equation}
0\rightarrow K(p)\rightarrow Q\rightarrow \Fhat/K\rightarrow 0,
\end{equation}
where   $K\subset \Fhat$ is the kernel of $\rhobar$ and $K(p)$ is the maximal pro-$p$ quotient of $K$. Then $R^{\ps, \Fhat}_{\Thetabar}= R^{\ps, Q}_{\Thetabar}$.
\end{cor}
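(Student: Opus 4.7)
The plan is to show that every continuous $G$-pseudocharacter of $\Fhat$ valued in an artinian local $\OO$-algebra and deforming $\Thetabar$ necessarily factors through $Q$. Since the inclusion $\Def^Q_{\Thetabar}\hookrightarrow \Def^{\Fhat}_{\Thetabar}$ is then an equality, the surjection $R^{\ps, \Fhat}_{\Thetabar} \twoheadrightarrow R^{\ps, Q}_{\Thetabar}$ becomes an isomorphism. By specialisation along the natural map $R^{\ps, \Fhat}_{\Thetabar}\to A$ for each artinian quotient $A$, it suffices to prove the single assertion $\Ker(\Theta^u) \supset \Ker(\Fhat \twoheadrightarrow Q)$ for the universal pseudocharacter $\Theta^u \in \cPC^{\Fhat}_G(R^{\ps, \Fhat}_{\Thetabar})$.

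First I would lift the problem to the framed side and observe that the universal framed deformation $\rho^{\univ} : \Fhat \to G(R^{\square,\Fhat}_{\rhobar})$ factors through $Q$. Since $R^{\square, \Fhat}_{\rhobar}$ is a complete local $\OO'$-algebra with residue field $k'$ of characteristic $p$, and $G$ is smooth over $\OO$, each of the successive layers $\ker\bigl(G(R^{\square}/\mm^{n+1}) \to G(R^{\square}/\mm^n)\bigr)$ is a finite-dimensional $k'$-vector space and therefore a $p$-group. Hence the reduction kernel $\ker\bigl(G(R^{\square, \Fhat}_{\rhobar}) \to G(k')\bigr)$ is pro-$p$. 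By the definition of $K$, the restriction $\rho^{\univ}|_K$ takes values in this pro-$p$ kernel, so it factors through the maximal pro-$p$ quotient $K(p)$ of $K$. Consequently $\rho^{\univ}$ factors through $Q$, and so does the associated pseudocharacter $\Theta_{\rho^{\univ}}$.

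Next I would transport this factorisation to $\Theta^u$ via the injection $R^{\ps, \Fhat}_{\Thetabar} \hookrightarrow R^{\square, \Fhat}_{\rhobar}$ of \Cref{inject_free}, along which $\Theta^u$ specialises to $\Theta_{\rho^{\univ}}$. For any $\delta \in \Ker(\Fhat \twoheadrightarrow Q)$, any $n \geq 1$, any $f \in \OO[G^n]^{G^0}$ and any $\gamma_1,\ldots,\gamma_n \in \Fhat$, the element
$$\Theta^u_n(f)(\gamma_1,\ldots,\gamma_n\delta) - \Theta^u_n(f)(\gamma_1,\ldots,\gamma_n) \in R^{\ps, \Fhat}_{\Thetabar}$$
has zero image in $R^{\square, \Fhat}_{\rhobar}$ by the previous paragraph, hence vanishes by injectivity. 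This is precisely the statement $\delta \in \Ker(\Theta^u)$, completing the argument.

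The real work is concentrated in the injectivity asserted by \Cref{inject_free}, which in turn rests on the GIT-theoretic input of \Cref{compl_inj}; everything after that is essentially a bookkeeping exercise. The only other point requiring mild care is that the reduction kernel of $G(R^{\square, \Fhat}_{\rhobar})$ is genuinely pro-$p$, which uses the smoothness of $G$ together with the fact that $k'$ has characteristic $p$.
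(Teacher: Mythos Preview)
Your proof is correct and follows essentially the same approach as the paper's: both arguments rest on (i) the observation that $\rho^{\univ}$ factors through $Q$ because the kernel of $G(R^{\square,\Fhat}_{\rhobar})\to G(k')$ is pro-$p$, and (ii) the injection $R^{\ps,\Fhat}_{\Thetabar}\hookrightarrow R^{\square,\Fhat}_{\rhobar}$ from \Cref{inject_free}. The paper packages the conclusion as a diagram chase through the isomorphism $R^{\square,\Fhat}_{\rhobar}\cong R^{\square,Q}_{\rhobar}$, whereas you unpack the same content as the element-wise inclusion $\Ker(\Fhat\twoheadrightarrow Q)\subset\Ker(\Theta^u)$; these are two phrasings of the same argument.
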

\begin{proof} If  $A\in \Aa_{\OO}$ then 
the kernel of $G(A)\rightarrow G(k)$ is a finite $p$-group. 
Hence, if $\rho_A$ is a deformation of $\rhobar$ to $A$ then 
the homomorphism $\rho_A: \Fhat \rightarrow G(A)$ factors through 
$Q\rightarrow G(A)$. We deduce that the surjection $R^{\square, \Fhat}_{\rhobar}\twoheadrightarrow R^{\square, Q}_{\rhobar}$ is an isomorphism. Since the natural map $R^{\ps, \Fhat}_{\Thetabar}\rightarrow R^{\square, \Fhat}_{\rhobar}$ is injective by \Cref{inject_free}, we deduce that the composition 
\begin{equation}
R^{\ps, \Fhat}_{\Thetabar}\twoheadrightarrow R^{\ps, Q}_{\Thetabar}\rightarrow R^{\square, Q}_{\rhobar}
\end{equation}
is injective. Hence, the surjection $R^{\ps, \Fhat}_{\Thetabar}\twoheadrightarrow R^{\ps, Q}_{\Thetabar}$ is an isomorphism.
\end{proof}

\begin{cor}\label{pro_p_Q} The quotient 
$K/\Ker(\Theta^u)$ is a pro-$p$ group.
\end{cor}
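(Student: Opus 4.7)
The plan is to deduce this directly from \Cref{Q_again}. By that corollary, the natural surjection $R^{\ps, \Fhat}_{\Thetabar} \twoheadrightarrow R^{\ps, Q}_{\Thetabar}$, which is induced by pullback of pseudocharacters along the quotient map $\Fhat \twoheadrightarrow Q$, is an isomorphism. Chasing the universal object through this isomorphism shows that $\Theta^u$ is itself a pullback of a continuous $G$-pseudocharacter of $Q$; in other words $\Theta^u$ factors through $Q$, i.e.\ $N \subset \Ker(\Theta^u)$, where $N$ denotes the kernel of $\Fhat \twoheadrightarrow Q$.

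By the definition of $Q$ in \eqref{def_Q_intro}, $N$ is precisely the kernel of the canonical surjection $K \twoheadrightarrow K(p)$ onto the maximal pro-$p$ quotient of $K$, so $N \subset K$ and $K/N \cong K(p)$. Reducing $\Theta^u$ modulo the maximal ideal of $R^{\ps, \Fhat}_{\Thetabar}$ also gives $\Ker(\Theta^u) \subset \Ker(\Thetabar) = \Ker(\Theta_{\rhobar}) = K$, the last equality being a standard fact for $G$-semisimple representations ($G^0$-invariant regular functions on $G^n$ separate closed $G^0$-orbits). Consequently $N \subset \Ker(\Theta^u) \subset K$, and $K/\Ker(\Theta^u)$ is a continuous quotient of $K/N \cong K(p)$, hence pro-$p$.

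I do not anticipate any real obstacle at this stage: the nontrivial content has already been carried out in the preceding corollaries, which in turn rest on the injectivity $R^{\ps, \Fhat}_{\Thetabar} \hookrightarrow R^{\square, \Fhat}_{\rhobar}$ proved via the geometric-invariant-theory argument of \Cref{sec_GIT}. The only mild subtlety is the identification $\Ker(\Theta_{\rhobar}) = \Ker(\rhobar)$ for semisimple $\rhobar$, but this is a direct consequence of Lafforgue's definition of the kernel together with the fact that closed $G^0$-orbits in $G^n$ are distinguished by invariant functions.
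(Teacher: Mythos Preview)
Your proof is correct and follows essentially the same approach as the paper's own proof: both deduce from \Cref{Q_again} that $\Theta^u$ factors through $Q$, hence $K/\Ker(\Theta^u)$ is a quotient of $K(p)$. You add an explicit justification of the inclusion $\Ker(\Theta^u)\subset K$ (via $\Ker(\Theta_{\rhobar})=\Ker(\rhobar)$ for $G$-semisimple $\rhobar$), which the paper takes for granted; your sketch of that equality is terse but sound.
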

\begin{proof} \Cref{Q_again} 
implies that $\Fhat\twoheadrightarrow \Fhat/\Ker(\Theta^u)$ factors as $Q\twoheadrightarrow \Fhat/\Ker(\Theta^u)$. Thus $K/\Ker(\Theta^u)$ is a quotient 
of $K(p)$ and hence is pro-$p$.
\end{proof} 

\section{The general case}\label{sec_gen}
Let $\Gamma$ be a profinite group and 
let $\Thetabar\in \cPC^{\Gamma}_G(k)$. 
Let $R^{\ps}:= R^{\ps, \Gamma}_{\Thetabar}$ be the universal ring 
representing $\Def_{\Thetabar}$, and 
let $\Theta^u$ be the universal deformation of $\Thetabar$ introduced in \Cref{sec_pseudo_def}. By the reconstruction theorem \cite[Theorem 3.7]{quast} there 
exists a $G$-semisimple representation 
$\rhobar: \Gamma \rightarrow G(\kbar)$,
uniquely determined up to $G^0(\kbar)$-conjugation, such that $\Theta_{\rhobar}=\Thetabar\otimes_k \kbar$. Theorem 3.8 in \cite{quast} implies that $\rhobar$ is continuous. 
We let $K$ be the kernel of $\rhobar$. 
Since the topology on $\kbar$ is discrete, $K$ is an open subgroup of $\Gamma$. Since $\rhobar$ is uniquely determined up to $G^0(\kbar)$-conjugation, $K$ is independent of the choice of $\rhobar$. Moreover, we have inclusions $\Ker(\Theta^u)\subset \Ker(\Thetabar)\subset K$ and $\Ker(\Theta^u)$ is a closed normal subgroup of $\Gamma$.

Let $X\subset \Gamma$ be a set of generators for $\Gamma$ as a topological group. For a subset $Y$ of $X$ we let 
$F_Y$ be the free group generated by $Y$. The homomorphism $F_Y \rightarrow \Gamma$ extends to a continuous homomorphism $\Fhat_Y \rightarrow\Gamma$ from the profinite completion of $F_Y$. 
We let $\rhobar_Y: \Fhat_Y \rightarrow \Gamma\overset{\rhobar}{\longrightarrow}
G(\kbar)$, $K_Y$ the kernel of $\rhobar_Y$, $\Thetabar_Y: = \Thetabar|_{\Fhat_Y}$, $R^{\ps}_Y$ the universal deformation ring representing $\Def_{\Thetabar_Y}$ and $\Theta^u_Y$ the universal deformation of $\Thetabar_Y$.

\begin{lem}\label{factor_thru} Let $Y$ be a subset of $X$. Then the 
map $\Fhat_Y \rightarrow \Gamma/\Ker(\Theta^u)$ factors through 
$\Fhat_Y/\Ker(\Theta^u_Y)\rightarrow \Gamma /\Ker(\Theta^u)$. 
\end{lem}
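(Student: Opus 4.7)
The plan is to exploit the universal property of $R^{\ps}_Y$ and then reduce to a pointwise check on algebraically closed field specialisations. First, the pullback $\iota_Y^{\ast}\Theta^u$ along the continuous homomorphism $\iota_Y \colon \Fhat_Y \to \Gamma$ is a continuous $R^{\ps}$-valued deformation of $\Thetabar_Y$, so the universal property of $R^{\ps}_Y$ produces a unique continuous local $\OO$-algebra homomorphism $\phi \colon R^{\ps}_Y \to R^{\ps}$ with $\phi_{\ast}\Theta^u_Y = \iota_Y^{\ast}\Theta^u$. Applying $\phi$ to the defining equations of $\Ker(\Theta^u_Y)$ in $R^{\ps}_Y$ yields, for each $\delta \in \Ker(\Theta^u_Y)$, the identities
\[
\Theta^u_n(f)(\iota_Y(\delta_1), \ldots, \iota_Y(\delta_n) \iota_Y(\delta)) = \Theta^u_n(f)(\iota_Y(\delta_1), \ldots, \iota_Y(\delta_n))
\]
in $R^{\ps}$ for all $n \geq 1$, $f \in \OO[G^n]^{G^0}$, and $\delta_1, \ldots, \delta_n \in \Fhat_Y$.

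To conclude $\iota_Y(\delta) \in \Ker(\Theta^u)$ one must promote these identities from tuples in the closed subgroup $\iota_Y(\Fhat_Y) \subseteq \Gamma$ to arbitrary tuples in $\Gamma^n$. I would verify the membership pointwise. For each morphism $x \colon R^{\ps} \to \Omega$ with $\Omega$ algebraically closed, the reconstruction theorem \cite[Theorem 3.7]{quast} supplies a $G$-semisimple representation $\rho_x \colon \Gamma \to G(\Omega)$ realising the specialised pseudocharacter. The composition $\rho_x \circ \iota_Y$ realises the specialisation of $\Theta^u_Y$ along $\phi$, so the hypothesis $\delta \in \Ker(\Theta^u_Y)$ forces $\delta$ to lie in the kernel of the $G$-semisimplification of $\rho_x \circ \iota_Y$. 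Hence $\rho_x(\iota_Y(\delta))$ acts trivially on every composition factor of $\rho_x|_{\Fhat_Y}$; combining this with the $G$-semisimplicity of $\rho_x$ should force $\rho_x(\iota_Y(\delta)) = 1$, giving $\iota_Y(\delta) \in \Ker(\rho_x) = \Ker(\Theta^u \otimes_{R^{\ps}} \Omega)$.

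Assembling these vanishings over all $\Omega$-valued points of $R^{\ps}$, together with the Hausdorff profinite structure of $R^{\ps}$ (which is detected by its artinian quotients and their residue field specialisations), yields $\iota_Y(\delta) \in \Ker(\Theta^u)$, as desired. I expect the main technical obstacle to lie in the step from ``trivial on composition factors of $\rho_x|_{\Fhat_Y}$'' to ``trivial in $G(\Omega)$''. This requires carefully using the $G$-semisimple structure of $\rho_x$ on the full group $\Gamma$ and the way $\iota_Y(\delta)$ sits inside it, reconciling the behaviour of $G$-semisimplification under restriction to the closed subgroup $\iota_Y(\Fhat_Y)$ with the Jordan-type decomposition of elements in $G(\Omega)$, especially in positive residue characteristic.
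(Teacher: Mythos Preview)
Your first paragraph is exactly the paper's proof: the universal property gives a continuous homomorphism $\phi\colon R^{\ps}_Y\to R^{\ps}$ with $\iota_Y^*\Theta^u=\phi_*\Theta^u_Y$, and the paper stops there, saying ``this implies the assertion.'' Since base change along $\phi$ can only enlarge kernels, this identity immediately yields $\Ker(\Theta^u_Y)\subseteq\Ker(\iota_Y^*\Theta^u)$, which the paper evidently regards as settling the matter. Everything after your first paragraph is an elaborate detour that the paper does not take---and it does not work.

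The fatal flaw is in your use of the reconstruction theorem. The ring $R^{\ps}$ is a local profinite $\OO$-algebra with residue field $k$, so \emph{every} ring homomorphism $x\colon R^{\ps}\to\Omega$ with $\Omega$ a field factors through $k$. Hence $\Theta^u\otimes_{R^{\ps}}\Omega=\Thetabar\otimes_k\Omega$ for all such $x$, and every $\rho_x$ you construct is conjugate to $\rhobar$. Your pointwise argument can therefore detect membership only in $\Ker(\Thetabar)$, never in the much smaller subgroup $\Ker(\Theta^u)$; the appeal to ``artinian quotients and their residue field specialisations'' does not repair this, since those residue fields are all $k$ and reconstruction is unavailable over non-reduced artinian rings. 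Independently, the step you yourself flag as the main obstacle---inferring $\rho_x(\iota_Y(\delta))=1$ from triviality on the composition factors of $\rho_x|_{\Fhat_Y}$---is simply false in general: nothing prevents $\rho_x(\iota_Y(\delta))$ from being a nontrivial unipotent element of $G(\Omega)$.
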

\begin{proof}The restriction of 
$\Theta^u$ to $\Fhat_Y$ is a deformation of $\Thetabar_Y$ to $R^{\ps}$. Thus 
there is a continuous homomorphism 
$R^{\ps}_Y \rightarrow R^{\ps}$ such 
that 
\begin{equation}
\Theta^u|_{\Fhat_Y} = \Theta^u_Y \otimes_{R^{\ps}_Y} R^{\ps}.
\end{equation}
This implies the assertion.
\end{proof}

\begin{lem}\label{dense_Y} 
Let $N$ be an open normal subgroup of $K$. Then there is a finite subset 
$Y$ of $X$ such that $K_Y\rightarrow K/N$ is surjective.
\end{lem}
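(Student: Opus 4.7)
The plan is to reduce to the case where we work with an open normal subgroup of $\Gamma$ rather than only of $K$, by passing to the normal core.

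First, I would replace $N$ by its normal core $N' := \bigcap_{g \in \Gamma} gNg^{-1}$ in $\Gamma$. Since $K$ is open (as $\rhobar$ is continuous with discrete target) and $N$ is open in $K$, the subgroup $N$ is open of finite index in $\Gamma$. Hence $N$ has only finitely many conjugates in $\Gamma$, so $N'$ is an intersection of finitely many open subgroups of finite index, and therefore is itself an open normal subgroup of $\Gamma$ contained in $N \subset K$. Because $N' \subset N$, the quotient map $K/N' \twoheadrightarrow K/N$ is surjective, so it suffices to show that there is a finite $Y \subset X$ with $K_Y \to K/N'$ surjective.

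Next, since $\Gamma/N'$ is a finite discrete group topologically generated by the image of $X$, a finite subset $Y \subset X$ already generates $\Gamma/N'$ as an abstract group. For such $Y$, the composition
\begin{equation}
\Fhat_Y \longrightarrow \Gamma \longrightarrow \Gamma/N'
\end{equation}
is surjective. Let $J$ denote the (closed) image of $\Fhat_Y$ in $\Gamma$; then $J N' = \Gamma$.

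The key step is to show that $(J \cap K) N' = K$. Given $k \in K$, write $k = j n'$ with $j \in J$ and $n' \in N'$ using $JN' = \Gamma$. Since $N' \subset K$, we have $j = k (n')^{-1} \in K$, so $j \in J \cap K$, and hence $k \in (J \cap K) N'$. This is the main point.

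Finally, $K_Y$ is by definition the preimage of $K$ under $\Fhat_Y \to \Gamma$, so its image in $\Gamma$ equals $J \cap K$. Composing with $K \twoheadrightarrow K/N'$, the image of $K_Y$ in $K/N'$ is $(J \cap K) N'/N' = K/N'$ by the previous step, and the further composition $K/N' \twoheadrightarrow K/N$ yields the desired surjection $K_Y \twoheadrightarrow K/N$. I do not expect any step here to be a serious obstacle; the only delicate point is passing from "$N$ normal in $K$" to "$N'$ normal in $\Gamma$" so that one can exploit the topological generation of $\Gamma$ by $X$, and this is standard.
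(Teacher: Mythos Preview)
Your proof is correct and follows essentially the same approach as the paper: replace $N$ by a smaller open subgroup normal in $\Gamma$ (the paper phrases this as ``we may assume that $N$ is normal in $\Gamma$,'' while you make the normal-core construction explicit), then use that $\Gamma/N'$ is finite to choose a finite $Y$ with $\Fhat_Y \to \Gamma/N'$ surjective, and read off the conclusion from the fact that $K_Y$ is the preimage of $K$. The paper compresses your last three paragraphs into the single sentence ``The preimage of $K/N$ is equal to $K_Y$, and hence the assertion follows,'' but the content is the same.
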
 
\begin{proof} We are free to replace 
$N$ by an open subgroup, 
which is normal in $K$.
Since $K$ is open in $\Gamma$ 
and hence of finite index, we may 
assume that $N$ is normal in $\Gamma$. Since  $\Gamma/N$ is a finite group there is a finite subset $Y$ of $X$ such that $\Fhat_Y\rightarrow \Gamma/N$ is 
surjective. The preimage of $K/N$ is 
equal to $K_Y$, and hence the assertion follows.
\end{proof}
Let $Q$ be a quotient of $\Gamma$ fitting in the exact sequence
\begin{equation}\label{def_Q}
0\rightarrow K(p)\rightarrow Q\rightarrow \Gamma/K\rightarrow 0,
\end{equation}
where $K(p)$ is the maximal pro-$p$ quotient of $K$.

\begin{lem}\label{Q_indep} Let $\rhobar_1: \Gamma \rightarrow G(\kbar)$ be a continuous representation with $\Theta_{\rhobar_1}=\Theta_{\rhobar}$. 
Let $K_1=\Ker(\rhobar_1)$ and let $Q_1$ be the quotient of $\Gamma$ defined by \eqref{def_Q} with $K_1$ instead of $K$. 
Then $Q=Q_1$.
\end{lem}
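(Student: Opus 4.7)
The plan is to reduce the equality $Q = Q_1$ to comparing the kernels of the surjections $\Gamma \twoheadrightarrow Q$ and $\Gamma \twoheadrightarrow Q_1$. Writing $N$ and $N_1$ for these kernels, unwinding the defining sequence \eqref{def_Q} identifies $N = \Ker(K \to K(p))$ and $N_1 = \Ker(K_1 \to K_1(p))$. The whole argument hinges on the group-theoretic comparison $K_1 \subset K$ with $K/K_1$ a finite $p$-group; once this is in hand, $N = N_1$ drops out from two applications of the universal property of the maximal pro-$p$ quotient.

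To establish $K_1 \subset K$ with $K/K_1$ a finite $p$-group, I would apply the reconstruction theorem \cite[Theorem 3.7]{quast} to $\rhobar_1$. Its $G$-semisimplification $\rhobar_1^{\mathrm{ss}}$ has $G$-pseudocharacter $\Theta_{\rhobar_1} = \Theta_{\rhobar}$, and by the uniqueness part of the same theorem $\rhobar_1^{\mathrm{ss}}$ is $G^0(\kbar)$-conjugate to $\rhobar$; in particular $\Ker(\rhobar_1^{\mathrm{ss}}) = K$. Because $\rhobar_1^{\mathrm{ss}}$ is obtained by composing $\rhobar_1$ with the Levi quotient of a parabolic $P \subset G_{\kbar}$ containing $\rhobar_1(\Gamma)$, we obtain $K_1 \subset \Ker(\rhobar_1^{\mathrm{ss}}) = K$ and $\rhobar_1(K) \cong K/K_1$ sits in the unipotent radical of $P$ over $\kbar$. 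Since $G(\kbar)$ is discrete and $\Gamma$ is compact, $\rhobar_1(\Gamma)$ is finite, so $K/K_1$ is a finite subgroup of a unipotent group in characteristic $p$, and hence a $p$-group.

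Granting this, $N = N_1$ is a short manipulation. For $N \subset N_1$: $N_1$ is characteristic in $K_1$ and $K_1$ is normal in $\Gamma$, so $N_1$ is normal in $\Gamma$, in particular in $K$; the quotient $K/N_1$ is an extension of the finite $p$-group $K/K_1$ by the pro-$p$ group $K_1/N_1 = K_1(p)$, hence pro-$p$; universality of $K \twoheadrightarrow K(p)$ gives $N \subset N_1$. For $N_1 \subset N$: $N$ is normal in $\Gamma$ (being characteristic in the normal subgroup $K$), so $K_1 \cap N$ is normal in $K_1$; the quotient $K_1/(K_1 \cap N)$ embeds into the pro-$p$ group $K/N = K(p)$, so universality of $K_1 \twoheadrightarrow K_1(p)$ yields $N_1 \subset K_1 \cap N \subset N$.

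The main obstacle is the invariant-theoretic input of the second paragraph: one needs to know that the $G$-semisimplification only collapses the unipotent radical of a parabolic, so that $K_1 \subset K$ with $p$-power-order quotient. For $\GL_n$ this is immediate from the Jordan--H\"older description, and for a general generalised reductive $G$ it should be extractable from the construction of $\rhobar_1^{\mathrm{ss}}$ in \cite{quast} and \cite{BHKT}.
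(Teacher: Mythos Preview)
Your proposal is correct and follows essentially the same route as the paper. Both arguments use the reconstruction theorem to place $\rhobar_1$ inside a parabolic $P$ whose Levi quotient recovers $\rhobar$, and then observe that $K/K_1 \cong \rhobar_1(K)$ lands in the unipotent radical $U(\kbar)$, hence is a finite $p$-group; the paper does exactly this, so what you flag as the ``main obstacle'' is in fact handled just as you outline. The only cosmetic difference is in the endgame: the paper phrases the conclusion as ``$Q$ is a quotient of $Q_1$ and $Q_1$ is a quotient of $Q$, hence $Q=Q_1$'', whereas you unpack this into the explicit kernel comparison $N=N_1$ via the universal property of the maximal pro-$p$ quotient---but these are the same argument, yours just being spelled out one level further.
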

\begin{proof} It follows from the reconstruction theorem that $\rhobar_1$ and $\rhobar$ have the same $G$-semi\-simp\-li\-fi\-cation.
Since $\rhobar$ is $G$-semisimple, after replacing $\rhobar$ by a conjugate with an element of $G^0(\kbar)$ we may assume that 
there is a parabolic subgroup $P$ of $G$ with unipotent radical $U$ and Levi $L$ such that 
$\rhobar_1$ takes values in $P(\kbar)$, $\rhobar$ takes values in $L(\kbar)$ and 
$\rhobar(\gamma) = \rhobar_1(\gamma) \pmod{U(\kbar)}$ for all $\gamma\in \Gamma$. Since  
$U(k')$ is a finite $p$-group for all finite extensions $k'$ of $k$, we obtain a surjection 
$\Gamma/K_1\twoheadrightarrow \Gamma/K$ such that the kernel $K/K_1$ is a finite $p$-group. 
Thus $\Gamma/K$ is a quotient of $Q_1$ and $\Gamma/K_1$ is a quotient of $Q$ and the kernels 
in both cases are pro-$p$. Hence, $Q_1$ is a quotient of $Q$ and $Q$ is the quotient of $Q_1$, 
which implies that $Q=Q_1$.
\end{proof} 

\begin{prop}\label{rewrite} Let $N$ be a closed normal subgroup of $\Gamma$ contained in $K$. If for every finite subset $Y\subset X$ 
the map $K_Y \rightarrow \Gamma/N$ factors through the maximal 
pro-$p$ quotient $K_Y(p)$, then $K/N$ is pro-$p$. In particular, 
$\Gamma \twoheadrightarrow \Gamma/N$ factors through 
$Q\twoheadrightarrow \Gamma/N$. 
\end{prop}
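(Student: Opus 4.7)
The plan is to verify the pro-$p$ property of $K/N$ via the standard criterion that a profinite group is pro-$p$ if and only if each of its finite (continuous) quotients is a $p$-group, and then deduce the factorization statement from the universal property of $K(p)$.

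First I would fix an open normal subgroup $M'$ of $K$ with $N \subset M'$; these range through all such $M'$ precisely as the open normal subgroups of $K/N$. The goal is to show that $K/M'$ is a finite $p$-group. By \Cref{dense_Y} applied to the open normal subgroup $M'$ of $K$, there exists a finite subset $Y \subset X$ such that the composition $K_Y \to K \to K/M'$ is surjective (where the first map is the image under $\Fhat_Y \to \Gamma$, which lands in $K$ since $\rhobar_Y = \rhobar|_{\Fhat_Y}$).

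Next, the hypothesis gives that $K_Y \to \Gamma/N$ factors through the maximal pro-$p$ quotient $K_Y(p)$. Since the image of $K_Y$ in $\Gamma/N$ is contained in $K/N$, and since $M' \supset N$ is normal in $K$, we can further compose with the surjection $K/N \twoheadrightarrow K/M'$ to obtain a factorization
\begin{equation}
K_Y \twoheadrightarrow K_Y(p) \to K/M'.
\end{equation}
By the surjectivity from \Cref{dense_Y}, the second arrow is itself surjective. Since $K_Y(p)$ is pro-$p$ and $K/M'$ is finite, $K/M'$ is a finite $p$-group. As $M'$ was arbitrary, $K/N$ is pro-$p$.

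For the "in particular" clause, I would use that $K$ is normal in $\Gamma$, so the conjugation action of $\Gamma$ descends to $K(p)$, making the kernel $L := \ker(K \twoheadrightarrow K(p))$ a closed normal subgroup of $\Gamma$; by definition $Q = \Gamma/L$. By the universal property of $K(p)$ as the maximal pro-$p$ quotient, any closed normal subgroup $N \subset K$ such that $K/N$ is pro-$p$ must contain $L$. Therefore $L \subset N$, and the quotient $\Gamma \twoheadrightarrow \Gamma/N$ factors through $Q = \Gamma/L$, as desired. I do not expect any serious obstacle here: the only mildly subtle point is that although $M'$ need not be normal in $\Gamma$, the argument only uses the well-defined map $K/N \twoheadrightarrow K/M'$, which exists since $M'$ is normal in $K$ and contains $N$.
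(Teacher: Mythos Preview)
Your proof is correct and follows essentially the same approach as the paper: pick an open normal subgroup $N' \supset N$ of $K$, use \Cref{dense_Y} to get a finite $Y$ with $K_Y \twoheadrightarrow K/N'$, and invoke the hypothesis to factor this through the pro-$p$ group $K_Y(p)$, forcing $K/N'$ to be a $p$-group. Your treatment of the ``in particular'' clause is somewhat more explicit than the paper's (you spell out that $\ker(K \to K(p))$ is normal in $\Gamma$ and contained in $N$), but the argument is the same.
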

\begin{proof} Let $N'$ be an open normal subgroup of $K$ containing $N$. \Cref{dense_Y} implies that there 
is a finite subset $Y$ of $X$ 
such that $K_Y \rightarrow K/N'$ is surjective. Our assumptions imply that this map factors as $K_Y(p)\rightarrow K/N'$. Thus the order of $K/N'$ is
a power of $p$, which implies that 
$K/N$ is pro-$p$. Hence, $K\rightarrow K/N$ factors as $K(p)\rightarrow K/N$, which implies the last assertion.
\end{proof} 

\begin{cor}\label{pro_p} The group $K/\Ker(\Theta^u)$ is pro-$p$.
\end{cor}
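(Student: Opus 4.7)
The plan is to apply \Cref{rewrite} with $N = \Ker(\Theta^u)$. Recall that $\Ker(\Theta^u)$ is a closed normal subgroup of $\Gamma$ contained in $K$ (as noted just before \Cref{factor_thru}), so the hypothesis of \Cref{rewrite} is exactly what needs to be checked: for every finite subset $Y \subset X$, the map $K_Y \to \Gamma/\Ker(\Theta^u)$ must factor through the maximal pro-$p$ quotient $K_Y(p)$.

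Fix such a finite $Y \subset X$. By \Cref{factor_thru}, the map $\Fhat_Y \to \Gamma/\Ker(\Theta^u)$ factors through $\Fhat_Y/\Ker(\Theta^u_Y)$. Restricting to the closed subgroup $K_Y \subset \Fhat_Y$, the map $K_Y \to \Gamma/\Ker(\Theta^u)$ factors through the image of $K_Y$ in $\Fhat_Y/\Ker(\Theta^u_Y)$, which is a continuous quotient of $K_Y/(K_Y \cap \Ker(\Theta^u_Y))$. Since $Y$ is finite, $\Fhat_Y$ is a finitely generated free profinite group, and \Cref{pro_p_Q} applied to $\Fhat_Y$ (with its representation $\rhobar_Y$ and universal deformation $\Theta^u_Y$ of $\Thetabar_Y$) tells us that $K_Y/\Ker(\Theta^u_Y)$ is pro-$p$. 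As a closed subgroup of this pro-$p$ group, $K_Y/(K_Y \cap \Ker(\Theta^u_Y))$ is itself pro-$p$, so the map $K_Y \to \Gamma/\Ker(\Theta^u)$ factors through a pro-$p$ quotient, and therefore through $K_Y(p)$.

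Having verified the hypothesis for every finite $Y \subset X$, \Cref{rewrite} yields that $K/\Ker(\Theta^u)$ is pro-$p$, as desired.

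The substantive work here has already been done: the free-group case \Cref{pro_p_Q} (which rests on the injectivity result \Cref{inj_prop} of \Cref{sec_free}) and the pro-$p$ descent statement \Cref{rewrite} together reduce the general case to a purely formal combination. The only mild subtlety to keep in mind is that one uses closed subgroups of pro-$p$ profinite groups are pro-$p$, so that passing from $K_Y/\Ker(\Theta^u_Y)$ to the image of $K_Y$ in $\Fhat_Y/\Ker(\Theta^u_Y)$ preserves the pro-$p$ property; there is no further obstacle.
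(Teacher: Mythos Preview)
Your proof is correct and follows essentially the same route as the paper: apply \Cref{rewrite} with $N=\Ker(\Theta^u)$, use \Cref{factor_thru} to factor $K_Y\to\Gamma/\Ker(\Theta^u)$ through $K_Y/\Ker(\Theta^u_Y)$, and invoke \Cref{pro_p_Q} to see this is pro-$p$. One minor redundancy: since $\Ker(\Theta^u_Y)\subset\Ker(\Thetabar_Y)\subset K_Y$, the intersection $K_Y\cap\Ker(\Theta^u_Y)$ equals $\Ker(\Theta^u_Y)$, so the image of $K_Y$ in $\Fhat_Y/\Ker(\Theta^u_Y)$ is already $K_Y/\Ker(\Theta^u_Y)$ itself, and the ``closed subgroup'' step is unnecessary.
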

\begin{proof} We will deduce the assertion from \Cref{rewrite} applied with $N=\Ker (\Theta^u)$. The map 
$K_Y \rightarrow K/\Ker(\Theta^u)$ 
factors through 
$K_Y/\Ker(\Theta^u_Y)$ by
\Cref{factor_thru}. Since
$K_Y/\Ker(\Theta^u_Y)$ is pro-$p$ by 
\Cref{pro_p_Q} the map $K_Y \rightarrow K/\Ker(\Theta^u)$ factors 
as $K_Y(p)\rightarrow K/\Ker(\Theta^u)$. The assertion follows from \Cref{rewrite}.
\end{proof}

\begin{cor}\label{same_ring} $R^{\ps, \Gamma}_{\Thetabar}= R^{\ps, Q}_{\Thetabar}$.
\end{cor}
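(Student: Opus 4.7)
The plan is to invert the canonical surjection $R^{\ps, \Gamma}_{\Thetabar} \twoheadrightarrow R^{\ps, Q}_{\Thetabar}$ recalled in the introduction, which arises from the fact that $\Def^Q_{\Thetabar}$ is a closed subfunctor of $\Def^{\Gamma}_{\Thetabar}$ (every continuous $G$-pseudocharacter of $Q$ pulls back to one of $\Gamma$ via $\Gamma\twoheadrightarrow Q$). To do so, I would show that the universal deformation $\Theta^u\in \cPC^{\Gamma}_G(R^{\ps, \Gamma}_{\Thetabar})$ itself factors through $\Gamma\twoheadrightarrow Q$, at which point the universal property of $R^{\ps, Q}_{\Thetabar}$ supplies the required inverse.

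Unwinding the definition of $Q$ in \eqref{def_Q}, the kernel of $\Gamma\twoheadrightarrow Q$ is the (automatically $\Gamma$-normal) subgroup $N\subset K$ characterised by $K/N = K(p)$. Consequently $\Theta^u$ factors through $Q$ if and only if $N\subset \Ker(\Theta^u)$, which is in turn equivalent to $K/\Ker(\Theta^u)$ being a quotient of the maximal pro-$p$ quotient $K(p)$ of $K$, i.e.\ pro-$p$. This is precisely the content of \Cref{pro_p}, which has already been established by combining \Cref{rewrite} with the finitely generated case \Cref{pro_p_Q} via the restrictions $\Theta^u_Y$.

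Once this factorisation is in hand, $\Theta^u$ is a deformation of $\Thetabar$ regarded as a pseudocharacter of $Q$, so by the universal property of $R^{\ps, Q}_{\Thetabar}$ it induces a continuous $\OO$-algebra map $R^{\ps, Q}_{\Thetabar}\to R^{\ps, \Gamma}_{\Thetabar}$. A routine verification that both composites act as the identity on the respective universal deformations (which reduces to the fact that $\Theta^u$ is sent to $\Theta^u$) shows that this map is a two-sided inverse of the surjection above, so both maps are isomorphisms. All the real content is absorbed into \Cref{pro_p}, so this corollary itself poses no further obstacle.
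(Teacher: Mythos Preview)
Your proof is correct and follows essentially the same approach as the paper: both deduce from \Cref{pro_p} that $K/\Ker(\Theta^u)$ is pro-$p$, hence $\Theta^u$ factors through $Q$, and conclude that the surjection $R^{\ps,\Gamma}_{\Thetabar}\twoheadrightarrow R^{\ps,Q}_{\Thetabar}$ is an isomorphism. The paper compresses the final step into ``which implies the assertion'', whereas you spell out the inverse via the universal property of $R^{\ps,Q}_{\Thetabar}$; this is merely an elaboration of the same argument.
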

\begin{proof}\Cref{pro_p} implies that 
the surjection $\Gamma\twoheadrightarrow \Gamma/\Ker(\Theta^u)$ factors through 
$Q\twoheadrightarrow \Gamma/\Ker(\Theta^u)$, 
which implies the assertion.
\end{proof}

\section{Mazur's finiteness condition}

A profinite group $\Gamma$ satisfies Mazur's  condition $\Phi_p$ if 
$\Hom^{\cont}(\Gamma', \Fp)$ is a finite dimensional $\Fp$-vector 
space 
for every open subgroup 
$\Gamma'$ of $\Gamma$. We keep the notation of the previous section.

\begin{lem}\label{fg} If $\Gamma$ satisfies Mazur's finiteness condition at $p$ then its quotient $Q$ in \Cref{same_ring}
is a finitely generated profinite group.
\end{lem}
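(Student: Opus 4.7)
The plan is to use the standard fact that a pro-$p$ group is topologically finitely generated if and only if its maximal elementary abelian $p$-quotient is finite, i.e., if and only if $\Hom^{\cont}(-, \Fp)$ is finite-dimensional. The group $Q$ sits in the extension $0 \to K(p) \to Q \to \Gamma/K \to 0$, so it suffices to show that $\Gamma/K$ is finite and that $K(p)$ is topologically finitely generated as a pro-$p$ group.

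First I would observe that $\rhobar: \Gamma \to G(\kbar)$ has finite image. Indeed, by continuity $\rhobar$ factors through a quotient by some open normal subgroup of $\Gamma$, and since any finite quotient of $\Gamma$ hitting $G(\kbar)$ lands in $G(k')$ for some finite subfield $k' \subset \kbar$, and $G(k')$ is finite because $G$ is of finite type over $\OO$. Therefore $K = \Ker(\rhobar)$ is open in $\Gamma$, and $\Gamma/K$ is finite.

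Next, since $\Gamma$ satisfies $\Phi_p$ and $K$ is an open subgroup of $\Gamma$, the $\Fp$-vector space $\Hom^{\cont}(K, \Fp)$ is finite-dimensional by hypothesis. Any continuous homomorphism $K \to \Fp$ factors through the maximal pro-$p$ quotient $K(p)$, so $\Hom^{\cont}(K(p), \Fp) = \Hom^{\cont}(K, \Fp)$ is finite-dimensional. By the Burnside basis theorem for pro-$p$ groups, this is equivalent to $K(p)$ being topologically finitely generated.

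Finally, $Q$ is an extension of the finite group $\Gamma/K$ by the topologically finitely generated pro-$p$ group $K(p)$: lifting a finite generating set of $\Gamma/K$ to $Q$ and combining with topological generators of $K(p)$ produces a finite topological generating set for $Q$. The main (and only) nontrivial input is recognising that Mazur's condition on the open subgroup $K$ controls topological generation of its maximal pro-$p$ quotient; everything else is formal.
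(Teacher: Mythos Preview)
Your proof is correct and follows essentially the same approach as the paper: both use that $K$ is open in $\Gamma$ so that $\Phi_p$ applies, identify $\Hom^{\cont}(K(p),\Fp)\cong\Hom^{\cont}(K,\Fp)$, and invoke the Burnside basis theorem (cited in the paper as \cite[Lemma 2.1]{Gouvea}) to conclude that $K(p)$ is finitely generated, whence $Q$ is an extension of a finite group by a finitely generated pro-$p$ group. The paper had already recorded that $K$ is open (since $\kbar$ is discrete), so your first paragraph is just making explicit what the paper established earlier.
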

\begin{proof} Since
\begin{equation}
\Hom^{\cont}(K(p), \Fp)\cong \Hom^{\cont}(K, \Fp)
\end{equation}
we deduce that $\Hom^{\cont}(K(p), \Fp)$ is a finite dimensional $\Fp$-vector space. This implies that $K(p)$ is finitely generated, \cite[Lemma 2.1]{Gouvea}.
The assertion follows as $\Gamma/K$ is a finite group.
\end{proof}

\Cref{same_ring} and \Cref{fg} imply that 
$R^{\ps, \Gamma}_{\Thetabar}= R^{\ps, Q}_{\Thetabar}$, 
where $Q$ is a finitely generated quotient of $\Gamma$. 
We may use this to transfer the results of 
\cite{quast}, \cite{defG} and \cite{inf_laf} 
proved for finitely generated profinite groups to the groups
satisfying Mazur's finiteness condition at $p$.

\begin{cor}\label{Phi_p_noeth} If $\Gamma$ satisfies Mazur's  condition 
$\Phi_p$ then $R^{\ps, \Gamma}_{\Thetabar}$ is noetherian.
\end{cor}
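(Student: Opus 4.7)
The plan is to combine the two structural results from the preceding section with the known noetherianness statement in the finitely generated case. By \Cref{same_ring}, the surjection $R^{\ps,\Gamma}_{\Thetabar} \twoheadrightarrow R^{\ps,Q}_{\Thetabar}$ is an isomorphism, so it suffices to show that $R^{\ps,Q}_{\Thetabar}$ is noetherian.

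The next step is to invoke \Cref{fg}, which uses Mazur's condition $\Phi_p$ together with the isomorphism $\Hom^{\cont}(K(p),\Fp) \cong \Hom^{\cont}(K,\Fp)$ to conclude that $K(p)$ is topologically finitely generated; combined with the finiteness of $\Gamma/K$, the middle group $Q$ in the extension $0 \to K(p) \to Q \to \Gamma/K \to 0$ is a finitely generated profinite group.

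Finally, I would apply the main noetherianness theorem of \cite{quast} to the finitely generated profinite group $Q$ and the continuous $G$-pseudocharacter $\Thetabar$ (viewed as a pseudocharacter of $Q$ via the surjection $\Gamma \twoheadrightarrow Q$). This yields that $R^{\ps,Q}_{\Thetabar}$ is noetherian, and transferring back through the isomorphism of \Cref{same_ring} gives that $R^{\ps,\Gamma}_{\Thetabar}$ is noetherian.

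There is essentially no obstacle here: all the work has been done in \Cref{same_ring} (reduction to the quotient $Q$, carried out in the previous section via the pro-$p$ argument culminating in \Cref{pro_p}) and in \Cref{fg} (finite generation of $Q$ under $\Phi_p$). The corollary is a three-line concatenation of these two lemmas with the finitely generated case proved by the second author in \cite{quast}.
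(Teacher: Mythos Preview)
Your proof is correct and follows exactly the paper's approach: invoke \Cref{same_ring} to identify $R^{\ps,\Gamma}_{\Thetabar}$ with $R^{\ps,Q}_{\Thetabar}$, use \Cref{fg} to see that $Q$ is finitely generated, and then apply \cite[Theorem 5.7]{quast}. The paper's proof is essentially a one-line reference to the finitely generated case, with the reduction via \Cref{same_ring} and \Cref{fg} stated in the paragraph preceding the corollary.
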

\begin{proof} The assertion follows from the corresponding 
result for finitely generated profinite groups \cite[Theorem 5.7]{quast}.
\end{proof}

\subsection{The moduli space  of condensed representations}
Let $R$ be a profinite noetherian $\OO$-algebra. Then any finitely generated 
$R$-module $M$ carries a unique Hausdorff topology, which makes it into a topological 
$R$-module. Let $A$ be an $R$-algebra. A representation $\rho: \Gamma\rightarrow G(A)$
is called \textit{$R$-condensed} if there exists a closed immersion of $\OO$-schemes
$\tau: G\hookrightarrow \mathbb A^n$ and a finitely generated $R$-submodule $M$ of 
$\mathbb{A}^n(A)=A^n$ such that $\tau(\rho(\Gamma))\subset M$ and 
the map $\tau\circ \rho: \Gamma \rightarrow M$ is continuous for the canonical topology on $M$ as a finitely generated $R$-module. We check in 
\cite[Lemma 4.2]{defG} that if $\rho$ is $R$-condensed with respect to 
one closed immersion $\tau$ then the same holds for all closed immersions.
Moreover, in \cite[Section 4.2]{defG} we show that $\rho$ is $R$-condensed 
if and only if there exists a homomorphism of condensed groups $\underline{\Gamma} \to G(A_{\disc} \otimes_{R_{\disc}} \underline{R})$ which recovers $\rho$ after evaluation at a point.
Here $\underline{-}$ denotes the functor from topological spaces to condensed sets and $(-)_{\disc}$ takes a set to a discrete condensed set. This interpretation in terms 
of condensed mathematics of Clausen and Scholze \cite{ScholzeCond} motivates the terminology. Informally,
$R$-condensed should be thought of as a continuity condition on $\rho$, even if 
there is no topology on the $R$-algebra $A$. 

If $\Gamma$ satisfies Mazur's condition $\Phi_p$, then $R^{\ps}_{\Thetabar}$ is 
noetherian by \Cref{Phi_p_noeth}.
Thus we may apply the discussion above with $R=R^{\ps}_{\Thetabar}$ and for an $R^{\ps}_{\Thetabar}$-algebra $A$ we let $X^{\gen}_{\Thetabar}(A)$ be the set of $R^{\ps}_{\Thetabar}$-condensed representations $\rho: \Gamma \rightarrow G(A)$ such that $\Theta_{\rho}=\Theta^u\otimes_{R^{\ps}_{\Thetabar}} A$. 

Let $Q$ be the quotient of $\Gamma$ defined in \eqref{def_Q}. We will add 
a superscript to indicate  which group we are working with. Since $R^{\ps, 
\Gamma}_{\Thetabar}= R^{\ps, Q}_{\Thetabar}$ by \Cref{same_ring}, we have 
an inclusion $X^{\gen, Q}_{\Thetabar}(A)\subset X^{\gen, \Gamma}_{\Thetabar}(A)$.

\begin{prop}\label{XgenQ} If $\Gamma$ satisfies Mazur's condition $\Phi_p$ then 
$X^{\gen, Q}_{\Thetabar}(A)=X^{\gen, \Gamma}_{\Thetabar}(A)$.
\end{prop}
\begin{proof} We choose a closed immersion of $\OO$-group schemes
$\tau: G \hookrightarrow \GL_d$. Composition with $\tau$ induces 
injections $X^{\gen, \Gamma}_{\Thetabar}(A) \hookrightarrow X^{\gen, \Gamma}_{\tau\circ \Thetabar}(A)$ and $X^{\gen, Q}_{\Thetabar}(A) \hookrightarrow X^{\gen, Q}_{\tau\circ \Thetabar}(A)$ by \cite[Proposition 3.8]{defG}. Thus we may assume that $G=\GL_d$. 
In this case the assertion follows from \cite[Lemma 5.5]{defG}.
\end{proof} 

The proposition  allows us to transfer the results proved for finitely generated 
profinite groups to profinite groups satisfying Mazur's condition $\Phi_p$. 
In the corollaries below we cite the reference for the statement, when $\Gamma$ 
is finitely generated. 

\begin{cor} If $\Gamma$ satisfies Mazur's condition $\Phi_p$ then the 
functor $X^{\gen}_{\Thetabar}: R^{\ps}_{\Thetabar} \rightarrow \Set$ 
is represented by a finite type $R^{\ps}_{\Thetabar}$-algebra $A^{\gen}_{\Thetabar}$. 
\end{cor}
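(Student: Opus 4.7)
The plan is to reduce to the finitely generated case via the quotient $Q$, exactly as was done for the previous corollaries in this section. By \Cref{fg}, since $\Gamma$ satisfies Mazur's condition $\Phi_p$, the quotient $Q$ defined by \eqref{def_Q} is a finitely generated profinite group. By \Cref{same_ring}, we have an equality of pseudo-deformation rings $R^{\ps, \Gamma}_{\Thetabar}=R^{\ps, Q}_{\Thetabar}$, and by \Cref{XgenQ}, we have an equality of functors $X^{\gen, \Gamma}_{\Thetabar}=X^{\gen, Q}_{\Thetabar}$ on $R^{\ps}_{\Thetabar}$-algebras.

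It therefore suffices to invoke the representability statement for finitely generated profinite groups, which is the main representability result of \cite{defG}: for a finitely generated profinite group $Q$, the functor $X^{\gen, Q}_{\Thetabar}$ is represented by a finite type $R^{\ps, Q}_{\Thetabar}$-algebra $A^{\gen, Q}_{\Thetabar}$. Taking $A^{\gen, \Gamma}_{\Thetabar}:=A^{\gen, Q}_{\Thetabar}$ and viewing it as an $R^{\ps, \Gamma}_{\Thetabar}$-algebra via the isomorphism from \Cref{same_ring} gives the desired conclusion.

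There is no genuine obstacle here: all the substantive work has already been done in the preceding sections. The only thing to verify is that the identifications are compatible, i.e.\ that the natural transformation expressing the representability of $X^{\gen, Q}_{\Thetabar}$ by $A^{\gen, Q}_{\Thetabar}$ as functors on $R^{\ps, Q}_{\Thetabar}$-algebras matches the functor $X^{\gen, \Gamma}_{\Thetabar}$ on $R^{\ps, \Gamma}_{\Thetabar}$-algebras after applying \Cref{same_ring} and \Cref{XgenQ}. This is immediate from the constructions, since the isomorphism in \Cref{same_ring} sends the universal pseudocharacter $\Theta^{u,\Gamma}$ to $\Theta^{u,Q}$ (both pseudocharacters agreeing by construction through the factorisation $\Gamma\twoheadrightarrow Q$), so that the specialisation conditions defining $X^{\gen, \Gamma}_{\Thetabar}$ and $X^{\gen, Q}_{\Thetabar}$ coincide pointwise.
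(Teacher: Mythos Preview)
Your proof is correct and follows exactly the approach of the paper: reduce to the finitely generated quotient $Q$ via \Cref{fg}, \Cref{same_ring}, and \Cref{XgenQ}, then invoke the representability result \cite[Proposition 8.3]{defG} for finitely generated profinite groups. The paper compresses this into a bare citation because the preceding paragraph already explains that all the corollaries are obtained by this transfer principle.
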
 
\begin{proof} \cite[Proposition 8.3]{defG}.
\end{proof}

Let $X^{\ps}_{\Thetabar}:=\Spec R^{\ps}_{\Thetabar}$. The conjugation action of $G^0(A)$ on $X^{\gen}_{\Thetabar}(A)$ induces
an action of $G^0$ on $X^{\gen}_{\Thetabar}$.
Mapping $\rho\in X^{\gen}_{\Thetabar}(A)$ to its 
$G$-pseudocharacter induces a $G^0$-equivariant
morphism $X^{\gen}_{\Thetabar} \rightarrow X^{\ps}_{\Thetabar}$ for the trivial action on the 
target. Hence the morphism factors through the 
GIT-quotient $X^{\gen}_{\Thetabar}\sslash G^0 \rightarrow X^{\ps}_{\Thetabar}$.

\begin{cor} If $\Gamma$ satisfies Mazur's condition $\Phi_p$ then the 
natural map 
$$X^{\gen}_{\Thetabar}\sslash G^0 \rightarrow X^{\ps}_{\Thetabar}$$
is a finite adequate homeomorphism in the sense of \cite[Definition 3.3.1]{alper}.
\end{cor}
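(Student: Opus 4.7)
The plan is to reduce the assertion, as in the previous two corollaries, to the finitely generated case by passing from $\Gamma$ to its quotient $Q$ defined in \eqref{def_Q}. By \Cref{fg}, $Q$ is a finitely generated profinite group, and the analogous statement for finitely generated profinite groups $Q$ has been proved in \cite{inf_laf} (as cited in the introduction), so it suffices to identify the morphism $X^{\gen, \Gamma}_{\Thetabar}\sslash G^0 \rightarrow X^{\ps, \Gamma}_{\Thetabar}$ with the corresponding morphism for $Q$.

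First, I would invoke \Cref{same_ring} to identify $X^{\ps, \Gamma}_{\Thetabar} = \Spec R^{\ps, \Gamma}_{\Thetabar}$ with $X^{\ps, Q}_{\Thetabar} = \Spec R^{\ps, Q}_{\Thetabar}$ as schemes, and note that this identification is compatible with the universal $G$-pseudocharacters on both sides (by the universal property). Then I would use \Cref{XgenQ} to identify $X^{\gen, \Gamma}_{\Thetabar}$ and $X^{\gen, Q}_{\Thetabar}$ as functors on $R^{\ps}_{\Thetabar}$-algebras, and hence as schemes by the preceding corollary giving representability. The $G^0$-action (by conjugation on the source, trivial on the target) is manifestly compatible with this identification, since inflation of a representation $\rho: Q \to G(A)$ along $\Gamma \twoheadrightarrow Q$ intertwines conjugation, and the pseudocharacter $\Theta_\rho$ only depends on $\rho$ viewed as a map out of $\Gamma$ or $Q$.

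Consequently, the map $X^{\gen, \Gamma}_{\Thetabar}\sslash G^0 \rightarrow X^{\ps, \Gamma}_{\Thetabar}$ is isomorphic to $X^{\gen, Q}_{\Thetabar}\sslash G^0 \rightarrow X^{\ps, Q}_{\Thetabar}$, and the latter is a finite adequate homeomorphism in the sense of \cite[Definition 3.3.1]{alper} by the finitely generated case treated in \cite{inf_laf}. Since being a finite adequate homeomorphism is a property of a morphism of schemes, the same holds for our morphism.

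There is no real obstacle here beyond bookkeeping: all the substantial work has been done in \Cref{same_ring} (identifying the pseudocharacter rings), \Cref{XgenQ} (identifying the generic fibers of condensed representations), and the reference to the finitely generated case in \cite{inf_laf}. The one point that deserves a brief explicit remark in the write-up is the compatibility of the $G^0$-action and of the morphism to the pseudocharacter space under these identifications, but this is immediate from the definitions.
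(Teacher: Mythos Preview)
Your proposal is correct and follows exactly the paper's approach: the paper's proof is the single citation \cite[Theorem 4.12]{inf_laf}, relying implicitly on the reductions already set up via \Cref{same_ring}, \Cref{fg}, and \Cref{XgenQ} to pass from $\Gamma$ to the finitely generated quotient $Q$. You have merely made these reductions explicit.
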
 
\begin{proof} \cite[Theorem 4.12]{inf_laf}.
\end{proof}

\section{Completions at maximal ideals and deformation problems}

Let $\kappa$ be a local field that is an $\OO$-algebra.  Let $\OO_{\kappa}$ be its ring of integers and let $k'$ be the residue field. 
We always equip $\kappa$ with the topology induced by the valuation. Let $\OO'=\OO\otimes_{W(k)} W(k')$, where $W(k)$ is the Witt ring of $k$.

In this section we will study deformation theory of $\Theta\in \cPC^{\Gamma}_G(\kappa)$, where  $\Gamma$ is an arbitrary profinite group and $G$ is a
generalised reductive group scheme over $\OO$.  If $\chara(\kappa)=0$ then we let $\Lambda=\kappa$ equipped with its natural topology and we let $\Lambda_0=\OO_{\kappa}$. 
If $\chara(\kappa)=p$ then\footnote{The study of deformation problem in this setting is motivated by the work of B\"ockle--Juschka \cite[Section 4.7]{BJ_new}.} we let $\Lambda= \OO\otimes_{W(k)} C$, where $C$ 
is a Cohen ring of $\kappa$, which means that $C$ is a complete DVR with 
uniformiser $p$ and residue field $\kappa$. If we choose an isomorphism 
$k'\br{t}\cong \OO_{\kappa}$ of local $k'$-algebras then $C$ can 
be chosen to be the $p$-adic completion of $W(k')\br{t}[1/t]$. There is a natural topology on $\Lambda$, such that the quotient topology on $\kappa$ induced via  $\Lambda \twoheadrightarrow \kappa$ coincides with the valuation 
topology on $\kappa$. The topology on $\Lambda$ is Hausdorff, see \cite[Section 3.5]{BIP_new} for more details. We let $\Lambda_0=\OO'\br{t}$. In both cases we let $\Aa_{\Lambda}$ be the 
category of local artinian $\Lambda$-algebras with residue field $\kappa$. 
Any $A\in \Aa_{\Lambda}$ is a finitely generated $\Lambda$-module, and 
hence carries a unique Hausdorff topology, which makes  $A$ into a topological $\Lambda$-module. 

For $A\in \Aa_{\Lambda}$ we let $\Def_{\Theta}(A)$ be the set of 
$\wTheta\in \cPC^{\Gamma}_G(A)$ such that $\wTheta\otimes_{A} \kappa = \Theta$. 
It follows\footnote{If $\chara(\kappa)=p$ then $\Lambda$ is taken to be equal to $\kappa$
in \cite{quast}. However,  the argument used in the  proof of \cite[Theorem 5.4]{quast} goes through in our more general setting.} from \cite[Theorem 5.4]{quast} that the functor $\Def_{\Theta}: \Aa_{\Lambda}\rightarrow \Set$ is pro-represented by a Hausdorff topological local $\Lambda$-algebra $R^{\ps}_{\Theta}$, which is a projective limit of rings in $\Aa_{\Lambda}$. We let $\wTheta^u\in \cPC^{\Gamma}_G(R^{\ps}_{\Theta})$ be the the universal 
deformation of $\Theta$.

\begin{lem}\label{values} If $\Theta\in \cPC^{\Gamma}_G(\kappa)$ then $\Theta$ takes values in $\OO_{\kappa}$.
\end{lem}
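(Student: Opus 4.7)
My plan is to use the reconstruction theorem to express $\Theta$ in terms of an actual representation and then exploit compactness of $\Gamma$. By \cite[Theorem 3.7]{quast} together with the continuity statement \cite[Theorem 3.8]{quast}, there should exist a continuous $G$-semisimple representation $\rho \colon \Gamma \to G(\kappabar)$, unique up to $G^0(\kappabar)$-conjugation, satisfying $\Theta_\rho = \Theta \otimes_\kappa \kappabar$. Consequently
\[
\Theta_n(f)(\gamma_1,\ldots,\gamma_n) = f(\rho(\gamma_1),\ldots,\rho(\gamma_n))
\]
for all $n\ge 1$, $f\in \OO[G^n]^{G^0}$ and $\gamma_1,\ldots,\gamma_n\in\Gamma$. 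Since the left side lies in $\kappa$ by hypothesis and $\OO_\kappa = \kappa\cap\OO_{\kappabar}$, it will suffice to show that the right side lies in $\OO_{\kappabar}$.

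I would first reduce to the topologically finitely generated case, using that $\Theta_n(f)(\gamma_1,\ldots,\gamma_n)$ depends only on the restriction of $\Theta$ to the closure of the subgroup generated by $\gamma_1,\ldots,\gamma_n$ in $\Gamma$. For $\Gamma$ topologically finitely generated, $\rho(\Gamma)$ becomes a compact subgroup of $G(\kappa')$ for some finite extension $\kappa'/\kappa$ over which the images of a finite set of topological generators are defined. The central geometric step is then to produce $g\in G(\kappabar)$ with $g\rho(\Gamma)g^{-1}\subset G(\OO_{\kappabar})$: over $\kappabar$ the neutral component $G^0$ splits, and by Bruhat--Tits theory any compact subgroup of a split reductive group over a complete discretely valued field is conjugate into the hyperspecial subgroup $G^0(\OO_{\kappabar})$ after passing to a sufficiently large extension, with the finite component group $G/G^0$ handled by further enlargement.

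Granted such a $g$, for every $f\in \OO[G^n]^{G^0}$ the element $f(g\rho(\gamma_1)g^{-1},\ldots,g\rho(\gamma_n)g^{-1})$ is the value of the $\OO$-morphism $f$ at a tuple of $G(\OO_{\kappabar})$-points, hence lies in $\OO_{\kappabar}$. Writing $g = g_0 h$ with $h\in G^0(\kappabar)$ and $g_0\in G(\kappabar)$ a coset representative of $gG^0$, the $G^0$-invariance of $f$ combined with the normality of $G^0$ in $G$ gives
\[
f(g\rho(\gamma_1)g^{-1},\ldots,g\rho(\gamma_n)g^{-1}) = (g_0^{-1}\cdot f)(\rho(\gamma_1),\ldots,\rho(\gamma_n)),
\]
with $g_0^{-1}\cdot f\in \OO[G^n]^{G^0}$. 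Since $f\mapsto g_0^{-1}\cdot f$ is a bijection of $\OO[G^n]^{G^0}$, varying $f$ shows that $f'(\rho(\gamma_1),\ldots,\rho(\gamma_n))\in\OO_{\kappabar}$ for every $f'\in \OO[G^n]^{G^0}$, which concludes the proof.

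The main obstacle will be the Bruhat--Tits step: arranging the conjugator to lie inside $G(\kappabar)$ itself (rather than in some ambient $\GL_N(\kappabar)$ via a faithful representation) and handling the finite component group $G/G^0$ uniformly with the connected part.
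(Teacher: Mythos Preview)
Your approach is essentially the paper's: realise $\Theta$ via a continuous $G$-semisimple $\rho:\Gamma\to G(\kappabar)$ from the reconstruction theorem and then conjugate the image into integral points. The paper differs in two places. First, rather than restricting to topologically finitely generated subgroups, it covers $\Gamma$ by the closed subgroups $\Gamma_{\kappa'}:=\rho^{-1}(G(\kappa'))$ for finite extensions $\kappa'/\kappa$; since $\Gamma=\bigcup_{\kappa'}\Gamma_{\kappa'}$ it suffices to show that $\Theta|_{\Gamma_{\kappa'}}$ is $\OO_{\kappabar}$-valued for each $\kappa'$, and the same $\rho$ is used throughout. Second, and this is the crucial point, it resolves your ``main obstacle'' by citing \cite[Lemma~4.3]{cotner}: for any profinite subgroup of $G(\kappa')$ there exist a finite extension $\kappa''/\kappa'$ and $g\in G^0(\kappa'')$ conjugating it into $G(\OO_{\kappa''})$. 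Having $g$ in $G^0$ rather than merely in $G$ is exactly what makes the argument clean: $G^0$-invariance gives $f(g\rho(\gamma_1)g^{-1},\ldots,g\rho(\gamma_n)g^{-1})=f(\rho(\gamma_1),\ldots,\rho(\gamma_n))$ directly, and your coset decomposition $g=g_0h$ is never needed.

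That last step in your argument has a gap as written. Your $g_0$ is only a $\kappabar$-point of $G$, so $g_0^{-1}\cdot f$ lies a priori in $\kappabar[G^n]^{G^0}$, not in $\OO[G^n]^{G^0}$; hence the claim that $f\mapsto g_0^{-1}\cdot f$ is a bijection of $\OO[G^n]^{G^0}$ is unjustified. The gap is fixable: since $G/G^0$ is finite \'etale and $\OO_{\kappabar}$ is a henselian valuation ring with algebraically closed residue field, one can choose the coset representative $g_0$ in $G(\OO_{\kappabar})$, and then writing an arbitrary $f'\in\OO[G^n]^{G^0}$ as an $\OO_{\kappabar}$-linear combination of elements $g_0^{-1}\cdot f_i$ with $f_i\in\OO[G^n]^{G^0}$ gives the conclusion. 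But arranging $g\in G^0$ from the outset, as the paper does via the Cotner reference, bypasses all of this.
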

\begin{proof} By the reconstruction theorem \cite[Theorem 3.7]{quast} there 
exists a continuous representation $\rho: \Gamma \rightarrow G(\kappabar)$, such 
that $\Theta_{\rho}= \Thetabar\otimes_{\kappa} \kappabar$. If $\kappa'$ is a
finite extension of $\kappa$ contained in $\kappabar$ then 
$G(\kappa')$ is a closed subgroup of $G(\kappabar)$ and we let 
$\Gamma_{\kappa'}$ be its preimage in $\Gamma$. Then $\Gamma_{\kappa'}$
is a closed and hence profinite subgroup of $\Gamma$. It follows from 
\cite[Lemma 4.3]{cotner} that there exists a finite extension 
$\kappa''$ of $\kappa'$ contained in $\kappabar$ and $g\in G^0(\kappa'')$ 
such that $g \rho(\Gamma_{\kappa'}) g^{-1} \subset G(\OO_{\kappa''})$. 
This implies that $\Theta|_{\Gamma_{\kappa'}}$ takes values in $\OO_{\kappa''}$. 
Since $\Gamma= \bigcup_{\kappa'} \Gamma_{\kappa'}$ we deduce that 
$\Theta$ takes values in $\OO_{\kappabar} \cap \kappa = \OO_{\kappa}$.
\end{proof}

Let $\Thetabar\in \cPC^{\Gamma}_G(k')$ be $\Theta\otimes_{\OO_{\kappa}} k'$.
Let $R^{\ps}_{\Thetabar}$ be the deformation ring representing $\Def_{\Thetabar}:\Aa_{\OO'} \rightarrow \Set$ and let $\Theta^u\in \cPC^{\Gamma}_G(R^{\ps}_{\Thetabar})$ be the universal deformation of $\Thetabar$. Since $\Theta$ is a deformation 
of $\Thetabar$ to $\OO_{\kappa}$ we obtain a homomorphism of local 
$\OO'$-algebras $\varphi: R^{\ps}_{\Thetabar} \rightarrow \OO_{\kappa}$.
The natural map $\Lambda\otimes_{\OO'} R^{\ps}_{\Thetabar}\rightarrow \kappa$
of $\Lambda$-algebras is surjective and we denote its kernel by $\qq$.

\begin{prop}\label{fg_kappa} If $\Gamma$ is finitely generated then there is a natural
isomorphism of local $\Lambda$-algebras between $R^{\ps}_{\Theta}$ and 
the $\qq$-adic completion of $\Lambda\otimes_{\OO'} R^{\ps}_{\Thetabar}$.
In particular, $R^{\ps}_{\Theta}$ is noetherian and $\wTheta^u= \Theta^u\otimes_{R^{\ps}_{\Thetabar}} R^{\ps}_{\Theta}$.
\end{prop}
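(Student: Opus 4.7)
My plan is to show that $S := (\Lambda \otimes_{\OO'} R^{\ps}_{\Thetabar})^{\wedge}_{\qq}$ pro-represents $\Def_\Theta$, which will yield the canonical isomorphism with $R^{\ps}_{\Theta}$, noetherianity, and the compatibility of the universal deformations in one stroke.

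First I would dispatch the ring theory. Since $\Gamma$ is finitely generated, $R^{\ps}_{\Thetabar}$ is noetherian by \Cref{Phi_p_noeth}. In characteristic zero, $\Lambda=\kappa$ is a finite extension of $\OO'[1/p]$, so $\Lambda \otimes_{\OO'} R^{\ps}_{\Thetabar}$ is finite over the noetherian ring $R^{\ps}_{\Thetabar}[1/p]$; in characteristic $p$, $\Lambda$ is finite over the Cohen ring $C$ and an analogous argument applies. Hence $S$ is a noetherian complete local $\Lambda$-algebra with residue field $\kappa$. The composite $R^{\ps}_{\Thetabar} \to \Lambda \otimes_{\OO'} R^{\ps}_{\Thetabar} \to S$ is continuous because $\mm_{R^{\ps}_{\Thetabar}}$ lands in $\qq$, so specializing the universal deformation produces $\Theta^u_S := \Theta^u \otimes_{R^{\ps}_{\Thetabar}} S \in \cPC^\Gamma_G(S)$. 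Its reduction modulo $\mm_S$ coincides with $\Theta$ by the very definition of $\qq$, so $\Theta^u_S$ lies in $\Def_\Theta(S)$, and the universal property of $R^{\ps}_{\Theta}$ yields a canonical continuous $\Lambda$-algebra homomorphism $R^{\ps}_{\Theta} \to S$.

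The core step is inverting this map. Given $A \in \Aa_\Lambda$ and $\wTheta \in \Def_\Theta(A)$, I need a continuous $\Lambda$-algebra map $S \to A$ sending $\Theta^u_S$ to $\wTheta$. My approach is to construct a continuous $\OO'$-algebra map $R^{\ps}_{\Thetabar} \to A$ whose pushforward of $\Theta^u$ is $\wTheta$ and whose further composition with $A \to \kappa$ is $\varphi$. Choose a surjection $F_r \twoheadrightarrow \Gamma$ from a finitely generated free group; by \Cref{finite_inv}, the pullback of $\wTheta$ to $F_r$ corresponds to an $\OO$-algebra map $\psi : \OO[G^r]^{G^0} \to A$. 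By \Cref{values} the reduction of $\psi$ modulo $\mm_A$ lands in $\OO_\kappa$ and specializes to the point classifying $\Thetabar|_{F_r}$; nilpotence of $\mm_A$ lets $\psi$ extend to the completion of $\OO'[G^r]^{G^0}$ at this point, which is identified with $R^{\ps,\Fhat_r}_{\Thetabar}$ via \Cref{inject_free}. That $\wTheta$ is genuinely a $\Gamma$-pseudocharacter forces this to factor through the natural quotient $R^{\ps,\Fhat_r}_{\Thetabar} \twoheadrightarrow R^{\ps,\Gamma}_{\Thetabar}$. Combining with $\Lambda \to A$ gives a $\Lambda$-algebra map $\Lambda \otimes_{\OO'} R^{\ps}_{\Thetabar} \to A$ killing a power of $\qq$ (again by nilpotence of $\mm_A$), which extends uniquely to the desired $S \to A$.

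The hard part is the passage from an $A$-valued $\Gamma$-pseudocharacter to a classifying map $R^{\ps}_{\Thetabar} \to A$ in spite of the residue field mismatch (the residue field of $A$ is $\kappa$, not $k'$). Finite generation of $\Gamma$ is essential here: it lets me realize $\wTheta$ through finitely much data on a free cover and then invoke the scheme-theoretic representability established in \Cref{sec_free}. Once the isomorphism $R^{\ps}_\Theta \cong S$ is established, the relation $\wTheta^u = \Theta^u \otimes_{R^{\ps}_{\Thetabar}} R^{\ps}_\Theta$ is automatic from the very construction of the natural deformation on $S$.
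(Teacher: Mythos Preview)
Your overall strategy---show that the $\qq$-adic completion $S$ pro-represents $\Def_\Theta$---is the paper's as well, and routing through the finite-type scheme $\PC^{F_r}_G \cong \Spec \OO[G^r]^{G^0}$ is the right instinct. But the step ``nilpotence of $\mm_A$ lets $\psi$ extend to the completion of $\OO'[G^r]^{G^0}$ at this point'' does not hold. If $\mm\subset\OO[G^r]^{G^0}$ is the maximal ideal of the $k'$-point $\Thetabar|_{F_r}$, then $p\in\mm$; when $\chara(\kappa)=0$ the ring $A$ is a $\kappa$-algebra, so $\psi(p)=p$ is a \emph{unit} and the ideal generated by $\psi(\mm)$ is all of $A$. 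More generally, the reduction of $\psi(\mm)$ modulo $\mm_A$ is the image of $\mm$ in $\kappa$ under the map classifying $\Theta|_{F_r}$, which is typically nonzero and hence contains units. Thus $\psi$ does not factor through any quotient $\OO[G^r]^{G^0}/\mm^n$, and nilpotence of $\mm_A$ alone cannot make it extend to the $\mm$-adic completion $R^{\ps,\Fhat_r}_{\Thetabar}$.

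The missing ingredient, which the paper supplies, is to first show that $\wTheta$ takes values in a $\Lambda_0$-subalgebra $A_0\subset A$ that is \emph{finite} as a $\Lambda_0$-module. Such an $A_0$ is a projective limit of objects of $\Aa_{\OO'}$ (it is local with residue field $k'$), so $\wTheta$ is a deformation of $\Thetabar$ to $A_0$ and the universal property of $R^{\ps}_{\Thetabar}$ applies directly, giving $R^{\ps}_{\Thetabar}\to A_0\subset A$. To find $A_0$, the paper uses \Cref{values} to see that the image of $\psi$ lies in the preimage $A_\infty\subset A$ of $\OO_\kappa$, which is an increasing union of $\Lambda_0$-finite subalgebras $A_n$; since $\OO[G^r]^{G^0}$ is of finite type over $\OO$, the image of $\psi$ is finitely generated and hence sits in some $A_n$. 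Your argument becomes correct once this step is inserted. (A smaller point: your characteristic-$p$ claim that ``an analogous argument applies'' to make $\Lambda\otimes_{\OO'}R^{\ps}_{\Thetabar}$ noetherian is unjustified, since neither $\Lambda$ nor $C$ is finite over $\OO'$; the paper instead appeals to \cite[Lemma 3.3.5]{BJ_new} and \cite[Lemma 3.36]{BIP_new} for noetherianity of the completion directly.)
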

\begin{proof} Let $B$ be the $\qq$-adic completion of $\Lambda\otimes_{\OO'} R^{\ps}_{\Thetabar}$. Then $B$ is a complete local noetherian $\Lambda$-algebra
with residue field $\kappa$ by \cite[Lemma 3.3.5]{BJ_new}, \cite[Lemma 3.36]{BIP_new}.
Thus $B/\mm_B^n \in \Aa_{\Lambda}$ for all $n\ge 1$ and $\wTheta_B:= \Theta^u\otimes_{R^{\ps}_{\Thetabar}} B$ is a deformation of $\Theta$ to $B$. 

The ring $B$ pro-represents a subfunctor $\Def'_{\Theta}$ of $\Def_{\Theta}$, such 
that if $A\in \Aa_{\Lambda}$, then $\wTheta\in \Def_{\Theta}(A)$ lies in $\Def'_{\Theta}(A)$ if and only if 
$\wTheta$ takes values in a $\Lambda_0$-subalgebra $A_0$ of $A$, which is finite 
as a $\Lambda_0$-module. 
Indeed, in this case $\wTheta$ is a deformation of $\Thetabar$ to $A_0$, and 
since $A_0$ is a limit of rings in $\Aa_{\OO'}$ we obtain a morphism of local 
$\OO'$-algebras $R^{\ps}_{\Thetabar}\rightarrow A_0$, which induces 
a morphism $B\rightarrow A$, such that $\wTheta= \wTheta_B\otimes_B A$. Moreover, 
$\wTheta_B \otimes_B B/\mm_B^n$ takes values in 
the image of $\Lambda_0\otimes_{\OO'} R^{\ps}_{\Thetabar}$ in 
$B/\mm_B^n$, which is a finite $\Lambda_0$-module. 

Since $\Gamma$ is finitely generated as a topological group, we may choose a dense 
discrete finitely generated subgroup $\Sigma$ of $\Gamma$. Since $\Sigma$ is dense 
the restriction map induces an injection $\Def_{\Theta}(A) \subset \PC^{\Sigma}_G(A)$.
Let $A_{\infty}$ be the $\Lambda_0$-subalgebra of $A$ equal to the preimage of 
$\OO_{\kappa}$ under $A\twoheadrightarrow \kappa$. Since $\mm_A$ nilpotent 
and a finite $\Lambda$-module, $A_{\infty}$ is a union of $\Lambda_0$-subalgebras 
$A_n$ of $A$, $n\ge 1$, such that each $A_n$ is a finite $\Lambda_0$-module and 
the image of $A_n \rightarrow \kappa$ is equal to $\OO_{\kappa}$. 
\Cref{values} implies that the restriction to $\Sigma$ identifies 
$\Def_{\Theta}(A)$ with a subset of $\PC^{\Sigma}_G(A_{\infty})$. 
Since $\Sigma$ is finitely generated, $\PC^{\Sigma}_G$ is represented 
by a finite type  $\OO$-algebra by \cite[Proposition 3.21]{quast}. Hence, $\PC^{\Sigma}_G(A_{\infty})$ 
is the union of $\PC^{\Sigma}_G(A_n)$ for $n\ge 1$. This implies that 
$\Def'_{\Theta}(A)=\Def_{\Theta}(A)$, which implies the assertion.
\end{proof}

As in \Cref{sec_gen}, let $\rhobar: \Gamma \rightarrow G(\kbar)$ be a 
continuous, $G$-semisimple representation with $\Theta_{\rhobar}=\Thetabar$.
Let $K$ be the kernel of $\rhobar$ and let $Q$ be the quotient of $\Gamma$ 
defined in \eqref{def_Q}.

\begin{cor}\label{pro-p-again} If $\Gamma$ is finitely generated then the group $K/\Ker(\wTheta^u)$ is 
pro-$p$ and $R^{\ps, \Gamma}_{\Theta}=R^{\ps, Q}_{\Theta}$.
\end{cor}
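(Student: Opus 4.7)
The plan is to reduce both assertions to their analogues for $\Thetabar$ proved in the previous section, using the explicit description of $R^{\ps,\Gamma}_{\Theta}$ provided by \Cref{fg_kappa}.

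First I would establish the pro-$p$ claim. By \Cref{fg_kappa} one has $\wTheta^u = \Theta^u \otimes_{R^{\ps}_{\Thetabar}} R^{\ps}_{\Theta}$, i.e.\ $\wTheta^u$ is the specialisation of $\Theta^u$ along the canonical $\OO'$-algebra map $R^{\ps,\Gamma}_{\Thetabar} \to R^{\ps,\Gamma}_{\Theta}$. The defining identity
\begin{equation}
\Theta_n(f)(\gamma_1, \ldots, \gamma_n \delta) = \Theta_n(f)(\gamma_1, \ldots, \gamma_n)
\end{equation}
for $\delta \in \Ker(\Theta^u)$ is preserved under applying any ring homomorphism to both sides. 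Hence $\Ker(\Theta^u) \subseteq \Ker(\wTheta^u)$, so $K/\Ker(\wTheta^u)$ is a quotient of $K/\Ker(\Theta^u)$. The latter is pro-$p$ by \Cref{pro_p}, yielding the first statement.

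For the identification of deformation rings I see two routes. The direct route follows the template of \Cref{same_ring}: since $K/\Ker(\wTheta^u)$ is pro-$p$, the quotient map $\Gamma \twoheadrightarrow \Gamma/\Ker(\wTheta^u)$ factors through $Q$, so $\wTheta^u$ belongs to $\Def^{Q}_{\Theta}(R^{\ps,\Gamma}_{\Theta})$. By universality this produces a morphism $R^{\ps,Q}_{\Theta} \to R^{\ps,\Gamma}_{\Theta}$ inverse to the tautological surjection $R^{\ps,\Gamma}_{\Theta} \twoheadrightarrow R^{\ps,Q}_{\Theta}$. Alternatively, since $\Gamma$ is finitely generated so is $Q$, and I could apply \Cref{fg_kappa} simultaneously to $\Gamma$ and $Q$: both $R^{\ps,\Gamma}_{\Theta}$ and $R^{\ps,Q}_{\Theta}$ are then the $\qq$-adic completion of $\Lambda \otimes_{\OO'} R^{\ps,\Gamma}_{\Thetabar}$ and $\Lambda \otimes_{\OO'} R^{\ps,Q}_{\Thetabar}$, and the equality $R^{\ps,\Gamma}_{\Thetabar} = R^{\ps,Q}_{\Thetabar}$ supplied by \Cref{same_ring} gives the desired identification.

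I do not anticipate any serious obstacle. Once the kernel inclusion $\Ker(\Theta^u) \subseteq \Ker(\wTheta^u)$ is recorded, both statements reduce formally to \Cref{fg_kappa}, \Cref{pro_p} and \Cref{same_ring}. The only point requiring a brief verification is that specialisation preserves kernels of pseudocharacters, and this is immediate from the definition in \Cref{sec_ker}.
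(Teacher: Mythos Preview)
Your proposal is correct and follows essentially the same argument as the paper: both deduce $\Ker(\Theta^u)\subset\Ker(\wTheta^u)$ from \Cref{fg_kappa}, invoke \Cref{pro_p} for the pro-$p$ claim, and then conclude $R^{\ps,\Gamma}_{\Theta}=R^{\ps,Q}_{\Theta}$ via the factorisation through $Q$ (your ``direct route''). Your alternative route via completing $R^{\ps,\Gamma}_{\Thetabar}=R^{\ps,Q}_{\Thetabar}$ is also valid but not the one the paper chooses.
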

\begin{proof} It follows from \Cref{fg_kappa} that $\Ker(\Theta^u)\subset \Ker(\wTheta^u)$. Since $K/\Ker(\Theta^u)$ is pro-$p$ by \Cref{pro_p}, 
$K/\Ker(\wTheta^u)$ is also pro-$p$. Thus $\Gamma/\Ker(\wTheta^u)$ 
is a quotient of $Q$, which implies the equality of deformation rings.
\end{proof}

Let $X$ be a set of generators of $\Gamma$. For every subset $Y$ of $X$ the notation 
$K_Y$, $\Thetabar_Y$, $\widehat{F}_Y$, $F_Y$ has the same meaning as in the previous 
section. We let $\Theta_Y$ be the restriction of $\Theta$ to $\widehat{F}_Y$, 
$R^{\ps}_{\Theta_Y}$ the universal deformation ring of $\Theta_Y$ and let $\wTheta^u_Y\in \cPC^{\widehat{F}_Y}_G(R^{\ps}_{\Theta_Y})$ be
the universal deformation of $\Theta_Y$.

\begin{lem}\label{thru} The 
map $\Fhat_Y \rightarrow \Gamma/\Ker(\wTheta^u)$ factors as
$\Fhat_Y/\Ker(\wTheta^u_Y)\rightarrow \Gamma /\Ker(\wTheta^u)$. 
\end{lem}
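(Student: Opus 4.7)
The plan is to transport the argument of \Cref{factor_thru} from the residue-field setting to the present setting, where the coefficient ring of the pseudocharacter we deform is $\kappa$ rather than $k'$. Since no genuinely new ingredient is needed, the proof will be a short formal consequence of the universal property of $R^{\ps}_{\Theta_Y}$, once the deformation theory set up in \Cref{sec_pseudo_def} and at the beginning of this section is in place.

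First, I would observe that the restriction $\wTheta^u|_{\widehat{F}_Y}$ is a continuous $G$-pseudocharacter of $\widehat{F}_Y$ with values in $R^{\ps}_{\Theta}$, and that its specialisation along $R^{\ps}_{\Theta} \twoheadrightarrow \kappa$ equals $\Theta|_{\widehat{F}_Y} = \Theta_Y$. Thus $\wTheta^u|_{\widehat{F}_Y}$ is a deformation of $\Theta_Y$ valued in $R^{\ps}_{\Theta}$, where $R^{\ps}_{\Theta}$ is viewed as a projective limit of objects of $\Aa_{\Lambda}$.

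Next, by the universal property of $R^{\ps}_{\Theta_Y}$, there exists a unique continuous $\Lambda$-algebra homomorphism $R^{\ps}_{\Theta_Y}\to R^{\ps}_{\Theta}$ such that
\[
\wTheta^u|_{\widehat{F}_Y} \;=\; \wTheta^u_Y\otimes_{R^{\ps}_{\Theta_Y}} R^{\ps}_{\Theta}.
\]
Applying this homomorphism to the identities that define $\Ker(\wTheta^u_Y)$ shows that the image in $\Gamma$ of every element of $\Ker(\wTheta^u_Y)$ lies in $\Ker(\wTheta^u)$, which yields the required factorisation $\widehat{F}_Y/\Ker(\wTheta^u_Y)\to \Gamma/\Ker(\wTheta^u)$, exactly as in the proof of \Cref{factor_thru}. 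I do not anticipate any serious obstacle: the only new points compared to the residue-field case are that $\wTheta^u|_{\widehat{F}_Y}$ is genuinely a deformation of $\Theta_Y$ in the sense of \Cref{sec_pseudo_def}, and that pro-representability of $\Def_{\Theta_Y}$ by $R^{\ps}_{\Theta_Y}$ is available in the $\kappa$-coefficient setting, both of which are already recorded earlier in this section.
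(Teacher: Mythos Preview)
Your proposal is correct and follows exactly the approach of the paper, whose proof reads in full ``The same proof as \Cref{factor_thru}.'' You have simply unpacked that reference in the $\kappa$-coefficient setting, using the universal property of $R^{\ps}_{\Theta_Y}$ to produce the map $R^{\ps}_{\Theta_Y}\to R^{\ps}_{\Theta}$ and then reading off the kernel containment.
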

\begin{proof} The same proof as \Cref{factor_thru}.
\end{proof}

\begin{prop}\label{main_kappa} The group $K/\Ker(\wTheta^u)$ is pro-$p$ and 
$R^{\ps, \Gamma}_{\Theta}=R^{\ps, Q}_{\Theta}$.
\end{prop}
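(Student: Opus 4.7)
The plan is to imitate, in the $\kappa$-valued setting, the argument by which \Cref{pro_p} and \Cref{same_ring} were obtained in the $k$-valued setting, substituting \Cref{pro-p-again} for \Cref{pro_p_Q} and \Cref{thru} for \Cref{factor_thru}. The master tool is \Cref{rewrite}, applied with $N = \Ker(\wTheta^u)$.

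First I would check that $N = \Ker(\wTheta^u)$ satisfies the hypotheses of \Cref{rewrite}, namely that it is a closed normal subgroup of $\Gamma$ contained in $K$. Normality is given by \cite[Lemma 3.11]{quast}. Closedness follows from the fact that $R^{\ps,\Gamma}_{\Theta}$ is Hausdorff together with the lemma on closedness of kernels in \Cref{sec_ker}. The inclusion $N \subset K$ comes from the chain $\Ker(\wTheta^u) \subset \Ker(\Theta) \subset \Ker(\Thetabar) \subset K$, where the last containment uses $\Theta_{\rhobar}=\Thetabar$ and $K=\Ker(\rhobar)$.

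Next I would verify the key hypothesis of \Cref{rewrite}: for every finite subset $Y \subset X$, the composition $K_Y \to \Gamma \to \Gamma/\Ker(\wTheta^u)$ factors through the maximal pro-$p$ quotient $K_Y(p)$. By \Cref{thru}, the map $\Fhat_Y \to \Gamma/\Ker(\wTheta^u)$ factors through $\Fhat_Y/\Ker(\wTheta^u_Y)$. Restricting to $K_Y$, and using $\Ker(\wTheta^u_Y) \subset K_Y$, the induced map factors through $K_Y/\Ker(\wTheta^u_Y)$. Since $\Fhat_Y$ is a finitely generated profinite group, \Cref{pro-p-again} applies and shows that $K_Y/\Ker(\wTheta^u_Y)$ is pro-$p$. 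Hence the map $K_Y \to K_Y/\Ker(\wTheta^u_Y)$ factors through $K_Y(p)$, and the composition into $\Gamma/\Ker(\wTheta^u)$ does too.

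Invoking \Cref{rewrite} then yields at once that $K/\Ker(\wTheta^u)$ is pro-$p$ and that $\Gamma \twoheadrightarrow \Gamma/\Ker(\wTheta^u)$ factors through $Q$. The latter factorisation means that $\wTheta^u$ descends to a continuous $G$-pseudocharacter of $Q$ lifting $\Theta$, which by the universal property yields a section of the natural surjection $R^{\ps, Q}_{\Theta} \hookrightarrow R^{\ps, \Gamma}_{\Theta}$ (composed with its inverse surjection), giving $R^{\ps, \Gamma}_{\Theta} = R^{\ps, Q}_{\Theta}$. The proof is therefore essentially a bookkeeping exercise; the real technical input — that things work in the finitely generated case — was already provided by \Cref{pro-p-again} via \Cref{fg_kappa}, and the only step to be careful about is ensuring that \Cref{thru} is genuinely available here (its proof is identical to that of \Cref{factor_thru}, which is why the paper defers it without repetition).
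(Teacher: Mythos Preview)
Your proposal is correct and follows exactly the same approach as the paper's proof: apply \Cref{rewrite} with $N=\Ker(\wTheta^u)$, using \Cref{thru} and \Cref{pro-p-again} to verify its hypothesis on each finite $Y\subset X$. The only glitch is notational: the natural surjection runs $R^{\ps,\Gamma}_{\Theta}\twoheadrightarrow R^{\ps,Q}_{\Theta}$ (not the other way), and the factorisation of $\wTheta^u$ through $Q$ shows it is injective, hence an isomorphism.
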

\begin{proof} Since $K_Y/\Ker(\wTheta^u_Y)$ is pro-$p$ by 
\Cref{pro-p-again},
the assertion follows from \Cref{rewrite} applied 
with $N=\Ker(\wTheta^u)$. \Cref{thru} implies that the 
assumptions in \Cref{rewrite} are satisfied.
\end{proof}

\begin{cor} If $\Gamma$ satisfies Mazur's 
condition $\Phi_p$ then $R^{\ps, \Gamma}_{\Theta}$ 
is naturally isomorphic to the $\qq$-adic completion
of $\Lambda\otimes_{\OO'} R^{\ps, \Gamma}_{\Thetabar}$. In particular, $R^{\ps, \Gamma}_{\Theta}$ is noetherian.
\end{cor}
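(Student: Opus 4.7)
The plan is to deduce this corollary by a short assembly of previously established results, with no new technical content. Since $\Gamma$ satisfies Mazur's condition $\Phi_p$, Lemma \ref{fg} guarantees that the quotient $Q$ defined in \eqref{def_Q} is a finitely generated profinite group. Proposition \ref{main_kappa} provides the identification $R^{\ps, \Gamma}_{\Theta} = R^{\ps, Q}_{\Theta}$, and Corollary \ref{same_ring} provides $R^{\ps, \Gamma}_{\Thetabar} = R^{\ps, Q}_{\Thetabar}$. Under the latter identification, the natural surjection $\Lambda \otimes_{\OO'} R^{\ps, \Gamma}_{\Thetabar} \twoheadrightarrow \kappa$ coincides with the one for $Q$, so the ideal $\qq$ in the statement matches the ideal of the same name appearing in Proposition \ref{fg_kappa} applied to $Q$.

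Since $Q$ is finitely generated, Proposition \ref{fg_kappa} tells us that $R^{\ps, Q}_{\Theta}$ is naturally isomorphic to the $\qq$-adic completion of $\Lambda \otimes_{\OO'} R^{\ps, Q}_{\Thetabar}$. Substituting the two identifications above yields the desired natural isomorphism between $R^{\ps, \Gamma}_{\Theta}$ and the $\qq$-adic completion of $\Lambda \otimes_{\OO'} R^{\ps, \Gamma}_{\Thetabar}$.

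For the final assertion, Corollary \ref{Phi_p_noeth} shows that $R^{\ps, \Gamma}_{\Thetabar}$ is noetherian, so the $\qq$-adic completion of $\Lambda \otimes_{\OO'} R^{\ps, \Gamma}_{\Thetabar}$ is a complete local noetherian ring by the same results (\cite[Lemma 3.3.5]{BJ_new}, \cite[Lemma 3.36]{BIP_new}) invoked inside the proof of Proposition \ref{fg_kappa}. There is no real obstacle in this argument; all of the substantive work was already done in Proposition \ref{main_kappa} (which transfers the problem from $\Gamma$ to $Q$) and in Proposition \ref{fg_kappa} (which handles the finitely generated case).
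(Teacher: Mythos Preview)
Your proof is correct and follows exactly the paper's approach: identify $R^{\ps,\Gamma}_{\Theta}=R^{\ps,Q}_{\Theta}$ via Proposition~\ref{main_kappa} and $R^{\ps,\Gamma}_{\Thetabar}=R^{\ps,Q}_{\Thetabar}$ via Corollary~\ref{same_ring}, then apply Proposition~\ref{fg_kappa} to the finitely generated group $Q$. The only difference is that you spell out the matching of the ideal $\qq$ and the noetherianity argument in more detail than the paper, which simply appeals to Proposition~\ref{fg_kappa} (whose statement already includes noetherianity).
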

\begin{proof} We have $R^{\ps, \Gamma}_{\Thetabar}= R^{\ps, Q}_{\Thetabar}$
by \Cref{same_ring} and $R^{\ps, \Gamma}_{\Theta}=R^{\ps, Q}_{\Theta}$
by \Cref{main_kappa}. Since $\Gamma$ satisfies $\Phi_p$, $Q$ is finitely generated 
and the assertion follows from \Cref{fg_kappa}.
\end{proof}

\section{The rigid analytic space of $G$-pseudocharacters}
Let $\Rig_L$ be the category of rigid analytic spaces over $L$. 
If $Y\in \Rig_L$ then its ring of global sections $\OO(Y)$ 
carries a natural Hausdorff topology. If $Y=\Sp(A)$ is affinoid then the topology on $A$ is the Banach space topology. 
 Let $\tilde{X}_G: \Rig_L^{\op} \rightarrow \Set$ be the functor that associates to every rigid analytic space $Y\in \Rig_L$ the 
set of continuous $G$-pseudocharacters $\cPC^{\Gamma}_G(\OO(Y ))$. We show that
if $\Gamma$ satisfies Mazur's condition $\Phi_p$, then $\tilde{X}_G$ is represented
by a rigid analytic space. 

\begin{prop}\label{sweet} Let $\Theta\in \cPC^{\Gamma}_G(A)$, where $A$ is an affinoid $L$-algebra.
Then $\Gamma/\Ker(\Theta)$ is an extension of a finite group by a pro-$p$ group.
\end{prop}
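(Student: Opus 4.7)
Plan. I would reduce to the local-field case of \Cref{main_kappa} via specialization at maximal ideals of $A$, and use that pro-$p$-ness is detected on closed topologically finitely generated subgroups. First, fix a maximal ideal $\mm_0 \subseteq A$; the residue field $\kappa_0 := A/\mm_0$ is a finite extension of $L$. By \Cref{values}, $\Theta_{\mm_0}$ takes values in $\OO_{\kappa_0}$, and its reduction modulo the maximal ideal yields $\Thetabar \in \cPC^{\Gamma}_G(k_0)$. The reconstruction theorem provides a continuous $G$-semisimple $\rhobar \colon \Gamma \to G(\overline{k_0})$ with open kernel $K$, supplying the required finite quotient $\Gamma/K$.

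The remaining task is to show that $K/(K \cap \Ker(\Theta))$ is pro-$p$, which I would verify by testing on closed topologically finitely generated subgroups $\Sigma \subseteq K$. For such $\Sigma$, pass to the $\mm_0$-adic completion $\hat{A}_{\mm_0}$: a complete local noetherian $L$-algebra with residue field $\kappa_0$, isomorphic to $\varprojlim_n A/\mm_0^n$, where each $A/\mm_0^n$ lies in $\Aa_{\kappa_0}$. Then $(\Theta|_\Sigma)_{\hat{A}_{\mm_0}}$ is a pro-artinian deformation of $(\Theta|_\Sigma)_{\mm_0}$, so by the universal property and the noetherianness of $R^{\ps, \Sigma}_{(\Theta|_\Sigma)_{\mm_0}}$ from \Cref{fg_kappa}, it arises from a continuous map $R^{\ps, \Sigma}_{(\Theta|_\Sigma)_{\mm_0}} \to \hat{A}_{\mm_0}$. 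Since $\Sigma \subseteq K$, the restriction $\rhobar|_\Sigma$ is trivial, so \Cref{pro-p-again} applied to the finitely generated $\Sigma$ shows that the universal $\wTheta^u$ factors through a pro-$p$ quotient of $\Sigma$; hence $\Sigma/\Ker((\Theta|_\Sigma)_{\hat{A}_{\mm_0}})$ is pro-$p$.

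The main obstacle will be to compare $\Ker(\Theta|_\Sigma)$ with $\Ker((\Theta|_\Sigma)_{\hat{A}_{\mm_0}})$, since $A \to \hat{A}_{\mm_0}$ need not be injective and so the former kernel may a priori be strictly smaller. I would resolve this by first reducing to the case where $A$ is reduced (a nilpotent enlargement contributes only a pro-$p$ extension to the kernel, which is harmless), then using the injection $A \hookrightarrow \prod_\mm \hat{A}_\mm$ valid for reduced affinoid $A$ to obtain $\Ker(\Theta|_\Sigma) = \bigcap_\mm \Ker((\Theta|_\Sigma)_{\hat{A}_\mm})$, and finally running the argument of the previous paragraph uniformly at every maximal ideal $\mm$. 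This last step requires $\Sigma$ to lie in every $\Ker(\rhobar_\mm)$, which I would arrange by replacing $K$ at the outset with the common intersection $\bigcap_\mm K_\mm$, shown to be open via a generic-point argument on a suitable formal model of $A$ (using that the semisimple reduction specializes from generic to special points).
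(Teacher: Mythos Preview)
Your overall strategy is the right one, but the final step has a real gap. You pass from a single maximal ideal to \emph{all} maximal ideals of $A$ and then need $\bigcap_\mm K_\mm$ to be open; equivalently, you need the closed subgroup $\Gamma/\Ker(\Theta)\hookrightarrow \prod_\mm \Gamma/\Ker(\Theta_{\hat A_\mm})$ of an infinite product of (pro-$p$)\nobreakdash-by\nobreakdash-finite groups to again be (pro-$p$)-by-finite. That fails in general: the finite pieces can accumulate. Your proposed fix (``generic-point argument on a suitable formal model'') is too vague, and in fact looks circular here, since knowing that $\Theta$ lands in a formal model is exactly the Corollary deduced \emph{from} this Proposition; without that, $\Theta$ does not reduce to the special fibre and there is no uniform $\rhobar$ controlling all the $K_\mm$ at once. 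The reduction to $A$ reduced is also unnecessary (and its justification unclear): for any noetherian $A$ the map $A\to\prod_\mm \hat A_\mm$ is already injective, so the problem really is the infinitude of the index set, not nilpotents.

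The paper resolves this with a simple but decisive trick: choose \emph{finitely many} maximal ideals $\mm_1,\dots,\mm_r$, one above each associated prime of $A$. Then $A\hookrightarrow\prod_{i=1}^r \hat A_{\mm_i}$ is still injective, and now a finite product of (pro-$p$)-by-finite groups is again (pro-$p$)-by-finite, so $\Gamma/\Ker(\Theta)$ inherits the property. Note also that your detour through closed finitely generated subgroups $\Sigma$ and \Cref{pro-p-again} is unnecessary: \Cref{main_kappa} already gives, for \emph{arbitrary} profinite $\Gamma$, that $\Gamma/\Ker(\wTheta_i)$ is (pro-$p$)-by-finite, so one can apply it directly at each $\mm_i$.
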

\begin{proof} Let $\pp_1, \ldots, \pp_r$ be the associated primes of $A$. For 
each $i$ 
we choose a maximal ideal $\mm_i$ containing $\pp_i$. The natural map 
$A\rightarrow \prod_{i=1}^r A_{\mm_i}$ is injective since the map 
$A\rightarrow \prod_{i=1}^r A_{\pp_i}$ is injective. Thus 
the map $A\rightarrow \prod_{i=1}^r \hat{A}_{\mm_i}$ is injective.

Let $\kappa_i$ be the residue field of $\mm_i$. Since $A$ is an affinoid algebra, 
$\kappa_i$ is a finite extension of $\Qp$. Let $\Theta_i=\Theta\otimes_A \kappa_i$ 
then $\Theta_i\in \cPC^{\Gamma}_G(\kappa_i)$ and $\wTheta_i=\Theta\otimes_A \hat{A}_{\mm_i}$
is a deformation of $\Theta_i$ to $\hat{A}_{\mm_i}$. \Cref{main_kappa}
implies that $\Gamma/\Ker(\wTheta_i)$ is an extension of  a finite group
by a pro-$p$ group. Hence the same applies to $\Gamma/\bigcap_{i=1}^r \Ker(\wTheta_i)$.
Since $A$ injects into $\prod_{i=1}^r \hat{A}_{\mm_i}$ we conclude
that $\Gamma/\Ker(\Theta)$ is a quotient of $\Gamma/\bigcap_{i=1}^r \Ker(\wTheta_i)$
and the assertion follows.
\end{proof}

\begin{cor} Let $\Theta\in \cPC^{\Gamma}_G(A)$, where $A$ is an affinoid $L$-algebra.
If $\Gamma$ satisfies Mazur's condition $\Phi_p$, then 
there exists an  $\OO$-subalgebra $A_0 \subseteq A$ which is a quotient of $\OO \langle x_1, \dots, x_t \rangle$ and $A_0[1/p] = A$, such that $\Theta$ takes values in $A_0$.
\end{cor}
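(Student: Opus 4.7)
The strategy is to reduce to the topologically finitely generated case, which was handled by J.Q.\ in \cite{quast}.

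First, by Proposition~\Cref{sweet} the quotient $\bar\Gamma := \Gamma/\Ker(\Theta)$ fits in a short exact sequence
\begin{equation*}
1 \to \bar K \to \bar\Gamma \to \bar F \to 1,
\end{equation*}
where $\bar K$ is a pro-$p$ group and $\bar F$ is a finite group. Mazur's condition $\Phi_p$ is inherited by $\bar\Gamma$: any open subgroup $\bar\Gamma'$ of $\bar\Gamma$ pulls back to an open subgroup $\Gamma'$ of $\Gamma$, and continuous homomorphisms $\bar\Gamma' \to \Fp$ factor through $\Gamma' \to \bar\Gamma'$, embedding $\Hom^{\cont}(\bar\Gamma',\Fp)$ into the finite space $\Hom^{\cont}(\Gamma',\Fp)$. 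Arguing exactly as in the proof of \Cref{fg} --- using $\Hom^{\cont}(\bar K,\Fp) \cong \Hom^{\cont}(\bar K(p),\Fp)$ and the standard fact that pro-$p$ groups with finite-dimensional mod-$p$ cohomology are topologically finitely generated --- we conclude that $\bar K$, and hence $\bar\Gamma$, is topologically finitely generated.

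Since $\Theta$ factors through $\bar\Gamma$ by definition of the kernel, we may view $\Theta$ as a continuous $G$-pseudocharacter of $\bar\Gamma$ valued in $A$. The corresponding statement for topologically finitely generated profinite groups is contained in J.Q.'s work \cite{quast} leading to the topologically finitely generated case of \Cref{main_rigid_intro}; applied to $\bar\Gamma$, it produces the desired formal model $A_0 \subseteq A$, a quotient of $\OO\langle x_1,\ldots,x_t\rangle$ with $A_0[1/p]=A$, accommodating the values of $\Theta$.

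The main obstacle is concentrated entirely in the reduction via Proposition~\Cref{sweet}, which supplies the needed structural input on $\Gamma/\Ker(\Theta)$; without it, Mazur's condition alone does not give topological finite generation of the quotient through which $\Theta$ factors. In the topologically finitely generated case handled in \cite{quast}, the key ingredients are: the restriction to a dense finitely generated discrete subgroup $\Sigma\subset\bar\Gamma$ (injective on pseudocharacters by \cite[Lemma 3.2]{quast}); the finite type of $\PC^\Sigma_G$ over $\OO$ from \cite[Proposition 3.21]{quast}, so that $\Theta|_\Sigma$ corresponds to an $\OO$-algebra map $\varphi\colon\OO[\PC^\Sigma_G]\to A$; the factorisation of the induced morphism $\Sp A\to(\PC^\Sigma_G\otimes_\OO L)^{\mathrm{an}}$ through an admissible affinoid open of the analytification, using quasi-compactness of $\Sp A$; and Raynaud's formal model theory to produce a compatible formal model $A_0$ of $A$ containing the values of $\Theta$ by continuity and the closedness of $A_0$ in $A$.
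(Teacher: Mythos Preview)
Your proof is correct and follows the same route as the paper: use \Cref{sweet} together with $\Phi_p$ to see that $\Gamma/\Ker(\Theta)$ is topologically finitely generated, then invoke the finitely generated case from \cite{quast} (the paper cites \cite[Lemma 6.16 (2)]{quast} directly). Your write-up simply unpacks the two-line argument, including the inheritance of $\Phi_p$ by $\bar\Gamma$ and a sketch of the cited lemma; the only superfluous step is the isomorphism $\Hom^{\cont}(\bar K,\Fp)\cong\Hom^{\cont}(\bar K(p),\Fp)$, which is trivial here since $\bar K$ is already pro-$p$.
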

\begin{proof} Our assumption on $\Gamma$ and \Cref{sweet} imply that 
$\Gamma/\Ker(\Theta)$ is finitely generated. The assertion follows from 
\cite[Lemma 6.16 (2)]{quast}.
\end{proof}

Let $|\PC^{\Gamma}_G|\subset \PC^{\Gamma}_G$ be the subset of closed points $z$ with finite residue field $k_z$, such that the canonical $G$-pseudocharacter 
$\Theta_z\in \PC^{\Gamma}_G(k_z)$ attached to $z$ is continuous for the discrete topology on $k_z$. Let $\OO_z= \OO\otimes_{W(k)} W(k_z)$ and 
let $R^{\ps}_{\Theta_z}$ be the deformation ring representing 
$\Def_{\Theta_z}: \Aa_{\OO_z}\rightarrow \Set$ and let $\Theta^u_z\in \cPC^{\Gamma}_G(R^{\ps}_{\Theta_z})$ be the universal deformation of $\Theta_z$.
If $\Gamma$ satisfies Mazur's condition $\Phi_p$ then 
$R^{\ps}_{\Theta_z}$ is a complete local noetherian $\OO_z$-algebra 
with residue field $k_z$ by \Cref{Phi_p_noeth}, and we let $(\Spf R^{\ps}_{\Theta_z})^{\rig}$
be the Berthelot rigid generic fibre of the formal scheme $\Spf R^{\ps}_{\Theta_z}$, 
\cite[Section 7]{deJong}.

\begin{thm}\label{main_rigid} If $\Gamma$ satisfies Mazur's  condition $\Phi_p$, then 
the functor $\tilde{X}_G$ is representable by a quasi-Stein space
$ X_G:=\coprod_{z\in |\PC^{\Gamma}_G|} (\Spf R^{\ps}_{\Theta_z})^{\rig}$.
\end{thm}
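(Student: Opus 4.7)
The plan is to construct a natural transformation $\Hom_{\Rig_L}(-, X_G) \to \tilde{X}_G$ and verify it is a bijection on each $Y \in \Rig_L$. Both functors satisfy descent for admissible coverings (for $\tilde{X}_G$ this uses the sheaf property of $\OO$ on $\Rig_L$ combined with the fact that each component $\Theta_n$ is determined by its values on an admissible covering, and continuity of maps $\Gamma^n \to \OO(Y)$ is a local condition), so it suffices to establish bijectivity on each affinoid $Y = \Sp(A)$.

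To define $X_G \to \tilde{X}_G$, write each Berthelot generic fibre $(\Spf R^{\ps}_{\Theta_z})^{\rig}$ as the increasing admissible union of affinoids $\Sp(B_{z,n})$, with $B_{z,n}$ the standard Berthelot affinoid attached to the $\mathfrak{m}_z$-adic completion. The natural map $R^{\ps}_{\Theta_z} \to B_{z,n}$ is continuous, so pushing the universal $\Theta^u_z \in \cPC^\Gamma_G(R^{\ps}_{\Theta_z})$ forward yields a compatible system of elements of $\cPC^\Gamma_G(B_{z,n})$, and hence an element of $\cPC^\Gamma_G(\OO((\Spf R^{\ps}_{\Theta_z})^{\rig}))$. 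Taking the disjoint union over $z \in |\PC^\Gamma_G|$ defines the natural transformation.

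The essential content is surjectivity on $Y = \Sp(A)$. Given $\Theta \in \cPC^\Gamma_G(A)$, the corollary preceding the theorem furnishes an $\OO$-subalgebra $A_0 \subseteq A$ that is a quotient of $\OO\langle x_1, \dots, x_t\rangle$ with $A_0[1/p] = A$ and $\Theta$ valued in $A_0$. The reduction $A_0 \otimes_\OO k$ is of finite type over $k$, so it has only finitely many maximal ideals $\mathfrak{n}_1, \dots, \mathfrak{n}_s$, each with finite residue field $k_{z_i}$. The specialisations $\Theta \otimes_{A_0} k_{z_i}$ are continuous $G$-pseudocharacters for the discrete topology, so they determine closed points $z_i \in |\PC^\Gamma_G|$. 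After replacing $\Spf(A_0)$ by an admissible formal blow-up that separates the $\mathfrak{n}_i$ into distinct connected components, $\Sp(A)$ becomes an admissible disjoint union of pieces $\Sp(A^{(i)})$ each reducing only to $\mathfrak{n}_i$. By universality of $R^{\ps}_{\Theta_{z_i}}$, the $\mathfrak{n}_i$-adic completion of the corresponding formal model receives a continuous local $\OO$-algebra homomorphism from $R^{\ps}_{\Theta_{z_i}}$, and passing to Berthelot generic fibres produces $\Sp(A^{(i)}) \to (\Spf R^{\ps}_{\Theta_{z_i}})^{\rig}$. Gluing yields the desired morphism $\Sp(A) \to X_G$ pulling back the universal $G$-pseudocharacter to $\Theta$, and uniqueness follows from pro-representability of each $\Def_{\Theta_{z_i}}$.

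The quasi-Stein property of $X_G$ then follows because each summand is a Berthelot generic fibre of a complete local noetherian $\OO$-algebra (noetherianness supplied by \Cref{Phi_p_noeth}), hence quasi-Stein, and a disjoint union of countably many quasi-Stein spaces is quasi-Stein. The main obstacle is the blow-up step above: explicitly constructing the admissible formal blow-up that separates the reductions, and checking that the resulting rigid decomposition of $\Sp(A)$ matches with Berthelot neighbourhoods of the $z_i$. This is the direct analogue for general $G$ of Chenevier's argument in \cite[Section 3]{che_durham} for determinant laws on $\GL_n$, and should proceed by the formal-rigid dictionary of \cite{deJong}.
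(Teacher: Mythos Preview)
There is a genuine error in your surjectivity argument. You assert that ``the reduction $A_0 \otimes_\OO k$ is of finite type over $k$, so it has only finitely many maximal ideals.'' This is false: a finite type $k$-algebra typically has infinitely many maximal ideals (take $A = L\langle x\rangle$, $A_0 = \OO\langle x\rangle$, so $A_0 \otimes_\OO k = k[x]$). What you would actually need is that the composite $\MaxSpec(A_0/\varpi) \to |\PC^\Gamma_G|$ has finite image, i.e.\ that only finitely many residual pseudocharacters $\Theta_z$ arise from $\Theta$. This is true, but it is not a formal consequence of $A_0$ being topologically of finite type; it requires an argument. Your subsequent blow-up step, which you flag as the main obstacle, is built on top of this false finiteness claim.

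The paper's proof bypasses the entire Chenevier-style construction. It uses \Cref{sweet} to show that $\Gamma/\Ker(\Theta)$ is an extension of a finite group by a pro-$p$ group; since $\Gamma$ satisfies $\Phi_p$, this quotient is topologically finitely generated, so $\Theta$ is in fact a continuous pseudocharacter of some finitely generated quotient $Q$. The representability result for finitely generated profinite groups is already established in \cite[Theorem 6.1]{quast}, and the paper simply invokes it to produce the morphism $\Sp(A)\to X^Q_G \subseteq X^\Gamma_G$. In other words, rather than redoing the formal-model and blow-up analysis (which is precisely the content of \cite[Theorem 6.1]{quast}), the paper reduces to that theorem as a black box. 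Your approach, even once the finiteness gap is repaired, amounts to reproving that theorem; the paper's reduction via \Cref{sweet} is both shorter and avoids the delicate rigid-geometric bookkeeping you anticipate.
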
 
\begin{proof} Let $(R^{\ps}_{\Theta_z})^{\rig}$ be the ring of global sections
of $(\Spf R^{\ps}_{\Theta_z})^{\rig}$. Then $\OO(X_G)=\prod_z (R^{\ps}_{\Theta_z})^{\rig}$ and $\Theta^{\univ}:= \prod_z \Theta^u_z\otimes_{R^{\ps}_{\Theta_z}} (R^{\ps}_{\Theta_z})^{\rig} \in 
\tilde{X}_G(X_G)$. A morphism of rigid analytic spaces $Y\rightarrow X_G$ induces 
a map of topological $L$-algebras $\OO(X_G) \rightarrow \OO(Y)$ to which 
we may associate $\Theta^{\univ}\otimes_{\OO(X_G)} \OO(Y)\in \tilde{X}_G(Y)$.
We claim that the map $\Mor_{\Rig_L}(Y, X_G)\rightarrow \tilde{X}_G(Y)$ defined above is 
bijective. It is enough to prove the claim for $Y$ affinoid, since the 
general case follows by considering admissible coverings. 

Let $A$ be an $L$-affinoid algebra, 
and let $\Theta\in \cPC^{\Gamma}_G(A)$. It follows from 
\Cref{sweet} that $\Theta\in \cPC^{Q}_G(A)$ for a finitely generated 
quotient $Q$ of $\Gamma$. Since the assertion of the theorem for finitely 
generated groups $\Gamma$ is proved in \cite[Theorem 6.1]{quast}, $\Theta$ 
induces a morphism of rigid analytic spaces:
\begin{equation} 
\Sp(A) \rightarrow \coprod_{z\in |\PC^{Q}_G|} (\Spf R^{\ps, Q}_{\Theta_z})^{\rig}
\subseteq \coprod_{z\in |\PC^{\Gamma}_G|} (\Spf R^{\ps, \Gamma}_{\Theta_z})^{\rig}
\end{equation}
such that $\Theta= \Theta^{\univ}\otimes_{\OO(X_G)} A$. Thus we 
obtain a canonical bijection between $\tilde{X}_G(\Sp(A))$ and 
$\Mor_{\Rig_L}(\Sp(A), X_G)$.  
\end{proof} 

\begin{cor}\label{points_rigid} If $\Gamma$ satisfies Mazur's condition $\Phi_p$ then the $\Lbar$-points
of $X_G$ are in canonical bijection with $G^0(\Lbar)$-conjugacy classes of continuous $G$-semisimple representations $\Gamma\rightarrow G(\Lbar)$.
\end{cor}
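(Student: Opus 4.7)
The strategy is to realise the claimed bijection as the composition
\[
X_G(\Lbar) \;\cong\; \cPC^{\Gamma}_G(\Lbar) \;\cong\; \{G^0(\Lbar)\text{-conjugacy classes of continuous }G\text{-semisimple }\rho: \Gamma \to G(\Lbar)\}.
\]
The right-hand bijection is the reconstruction theorem: Theorem 3.7 of \cite{quast} bijects $G$-pseudocharacters over the algebraically closed field $\Lbar$ with $G^0(\Lbar)$-conjugacy classes of $G$-semisimple representations, and Theorem 3.8 of \cite{quast} ensures that the continuity of $\Theta$ matches the continuity of the associated $\rho$.

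For the left-hand bijection I would use the representability from \Cref{main_rigid}. By the standard description of points of a rigid analytic space over $L$,
\[
X_G(\Lbar) \;=\; \varinjlim_{L'/L\text{ finite}} X_G(L') \;=\; \varinjlim_{L'/L\text{ finite}} \cPC^{\Gamma}_G(L'),
\]
the second equality being representability. The natural map from this colimit into $\cPC^{\Gamma}_G(\Lbar)$ is clearly injective, so the task reduces to surjectivity: every continuous $\Theta \in \cPC^{\Gamma}_G(\Lbar)$ should factor through $\cPC^{\Gamma}_G(L')$ for some finite $L'$.

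The surjectivity is the main obstacle. I would attack it by adapting the argument in the proof of \Cref{values}: apply the reconstruction theorem to $\Theta$ to obtain a continuous $G$-semisimple $\rho: \Gamma \to G(\Lbar)$ with $\Theta_\rho = \Theta$, use the compactness of $\rho(\Gamma)$ in the direct-limit topology on $G(\Lbar) = \bigcup_{L'} G(L')$ to confine the image to some $G(L_1)$ with $L_1/L$ finite, and then invoke \cite[Lemma 4.3]{cotner} to $G^0(\Lbar)$-conjugate $\rho$ into $G(\OO_{L'})$ for a further finite extension $L'/L_1$. This forces $\Theta = \Theta_\rho$ to take values in $\OO_{L'}$, giving the required factorisation. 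The $G^0(\Lbar)$-conjugacy class of $\rho$ is intrinsic to $\Theta$ by the reconstruction theorem, so the composite bijection is canonical.
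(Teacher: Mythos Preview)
Your proof is correct and follows the same two-step outline as the paper: identify $X_G(\Lbar)$ with $\cPC^{\Gamma}_G(\Lbar)$, then apply the reconstruction theorem \cite[Theorems 3.7, 3.8]{quast}. The paper's own argument is very terse---it asserts $X_G(\Lbar) = \cPC^{\Gamma}_G(\Lbar)$ ``by the definition of $X_G$'' (with $\Lbar$ given the direct-limit topology) and refers to \cite[Corollary 6.23]{quast}---whereas you spell out the nontrivial surjectivity of $\varinjlim_{L'} \cPC^{\Gamma}_G(L') \to \cPC^{\Gamma}_G(\Lbar)$ via compactness of $\rho(\Gamma)$. One minor redundancy: once $\rho(\Gamma)\subset G(L_1)$, the defining formula \eqref{rep_to_pc} already shows that $\Theta=\Theta_\rho$ takes values in $L_1$, so the further appeal to \cite[Lemma 4.3]{cotner} to conjugate into $G(\OO_{L'})$ is not needed for this corollary.
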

\begin{proof} If $\Gamma$ is finitely generated then the assertion 
is proved in \cite[Corollary 6.23]{quast} and the same proof carries over. 
By the definition of $X_G$, we have $X_G(\Lbar) = \cPC^{\Gamma}_G(\Lbar)$, where
$\Lbar$ carries the direct limit topology of its finite subextensions of $L$. The claim now follows from the reconstruction theorem \cite[Theorem 3.8]{quast}.
\end{proof} 

\begin{remar} A priori it is not at all clear that the set of $G^0(\Lbar)$-conjugacy classes of continuous $G$-semisimple representations $\rho:\Gamma\rightarrow G(\Lbar)$
can be parameterised by $\Lbar$-points of a rigid analytic space. The finiteness 
statements proved in \Cref{Phi_p_noeth} and \Cref{main_rigid} are key inputs 
in the proof of this statement. 
\end{remar}

\bibliographystyle{plain}
\bibliography{Ref}
\end{document}